\newtheorem{thm}{Theorem}[section]
\newtheorem{lem}[thm]{Lemma}
\newtheorem{prop}[thm]{Proposition}
\newtheorem{cor}[thm]{Corollary} 
\newtheorem{conj}[thm]{Conjecture}
\theoremstyle{definition}
\newtheorem{rem}[thm]{Remark}
\newtheorem{defn}[thm]{Definition}
\newtheorem{constr}[thm]{Construction} 
\newtheorem{ex}[thm]{Example} 
\newtheorem{conv}[thm]{Convention} 
\newtheorem{cond}[thm]{Condition} 
\newtheorem{setting}[thm]{Setting}
\newtheorem{question}[thm]{Question}
\newtheorem*{thm*}{Theorem}
\newcommand{\id}{{\rm id}}
\newcommand{\scal}{scal}
\newcommand{\N}{\mathbb{N}}
\newcommand{\C}{\mathbb{C}}
\newcommand{\F}{\mathbb{ F}}
\newcommand{\Z}{\mathbb{ Z}}
\newcommand{\lra}{\longrightarrow}
\newcommand{\HH}{\operatorname{H}}
\newcommand{\RH}{\operatorname{RH}} 
\newcommand{\BP}{\operatorname{BP}}
\newcommand{\MU}{\operatorname{MU}}
\newcommand{\Tor}{\operatorname{Tor}}
\newcommand{\SO}{\operatorname{SO}}
\newcommand{\R}{\mathbb{ R}}
\newcommand{\im}{{\rm im\,}}
\newcommand{\pr}{\operatorname{pr}} 
\newcommand{\RP}{\mathbb{ R}\mathbb{ P}}
\newcommand{\ttimes}{\tilde{\times}}
\newcommand{\CP}{\mathbb{ C}{\rm P}}
\newcommand{\be}{\begin{eqnarray*}}
\newcommand{\ee}{\end{eqnarray*}}
\newcommand{\ba}{\begin{align*}}
\newcommand{\ea}{\end{align*}}
\newcommand{\sing}{\mathrm{Sing}}
\newcommand{\rot}[1]{\textcolor{black}{#1}}
\begin{document}

\title[Positive scalar curvature and odd order abelian fundamental groups]{Positive scalar curvature on manifolds with odd order abelian fundamental groups} 
\author{Bernhard Hanke}
\address{Institut f\"ur Mathematik, Universit\"at Augsburg, D-86135 Augsburg, Germany}
\date{\today}
\email{\href{mailto:hanke@math.uni-augsburg.de}{hanke@math.uni-augsburg.de}}

\begin{abstract} We introduce Riemannian metrics of positive scalar curvature on manifolds with Baas-Sullivan singularities, prove a corresponding homology invariance principle and discuss admissible products. 

Using this theory we construct positive scalar curvature metrics on closed smooth manifolds of dimension at least five which have odd order abelian fundamental groups, are nonspin and  atoral. 
This   solves the Gromov-Lawson-Rosenberg conjecture for a new class of manifolds with finite fundamental groups.

\end{abstract} 
\keywords{Manifolds with Baas-Sullivan singularities, positive scalar curvature, admissible products, group homology, Brown-Peterson homology} 

\subjclass[2010]{Primary:  53C21, 57R15;  Secondary: 55N20, 57T10}

\maketitle
\tableofcontents

\section{Summary} 

We will show the following existence result for positive scalar curvature metrics. 

\begin{thm} \label{main} Let $M$ be a closed  connected smooth manifold of dimension at least $5$  with odd order abelian fundamental group. 
Assume that $M$ is nonspin and $p$-atoral for all primes $p$ dividing the order of $\pi_1(M)$. 
Then $M$ admits a Riemannian metric of positive scalar curvature.
\end{thm}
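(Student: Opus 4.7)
The plan is bordism-theoretic, in the tradition of Gromov, Lawson, Rosenberg and Stolz. Write $\pi := \pi_1(M)$ and $n := \dim M \geq 5$. Since $M$ is nonspin, the Gromov-Lawson surgery theorem reduces the existence of a PSC metric on $M$ to showing that the class $[M \to B\pi] \in \Omega^{\SO}_n(B\pi)$ lies in the subgroup $\Omega^{\SO,+}_n(B\pi)$ generated by closed oriented manifolds admitting a PSC metric; no spin structure means we work in oriented (not spin) bordism, so $\mathrm{KO}$-theoretic $\alpha$-type obstructions cannot interfere. As $\pi$ is finite abelian of odd order, $\widetilde{\HH}_*(B\pi;\Z)$ is torsion concentrated at the odd primes $p$ dividing $|\pi|$, and the desired inclusion may be checked one prime at a time.

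Fix such an odd prime $p$. After $p$-localization, $\Omega^{\SO}_*(-)_{(p)}$ splits as a direct sum of suspensions of $\BP_*(-)$, so the image of $[M \to B\pi]$ in $\BP_n(B\pi)$ carries all the essential content. The machinery of this paper enters here: Baas-Sullivan singularities obtained by killing the polynomial generators of $\pi_* \MU$ of degrees $\neq 2(p^i - 1)$ geometrically realise $\BP$-bordism, and the \emph{homology invariance principle} together with the \emph{admissible products} machinery allow PSC metrics to be transferred between such singular manifolds and smooth ones. It therefore suffices to represent the class of $M$ in $\BP_n(B\pi)$ by a Baas-Sullivan manifold that carries a PSC metric compatible with its singularities.

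To construct such a representative, I would exploit the K\"unneth decomposition $B\pi \simeq \prod_i B\Z/p_i^{k_i}$ and the $\BP_*$-module structure on $\BP_*(B\pi)$ to reduce the problem to standard generators coming from lens spaces — which carry PSC metrics explicitly — and their admissible products with $\BP$-generating building blocks. The $p$-atoral hypothesis is designed precisely to rule out the ``toral'' portion of $\BP_n(B\pi)$, namely classes factoring through maps $(B\Z/p)^k \to B\pi$, which otherwise would obstruct the existence of a PSC representative. The principal obstacle, and the heart of the argument, is this last decomposition step: one must match the $\BP$-theoretic content of atoral classes with explicit PSC Baas-Sullivan building blocks, and verify that the admissible product structure on singular PSC bordism is rich enough to assemble every such class. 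The surgery step, the $p$-local reduction, and the passage from singular to smooth PSC should all be absorbed into the abstract framework developed earlier in the paper; the combinatorial analysis of atoral classes in $\BP$-homology is where the real work lies.
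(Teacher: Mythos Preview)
Your overall shape is right --- surgery reduction, a prime-at-a-time analysis, K\"unneth, and lens spaces as PSC building blocks --- but two concrete points diverge from the paper and at least one is a genuine gap.

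First, you misidentify the ambient homology theory. The Baas--Sullivan singularities $\mathscr{Q}$ in this paper are chosen to kill polynomial generators of $\Omega^{\SO}_*/\text{torsion}$, so that $\Omega^{\SO,\mathscr{Q}}_*(-)\otimes\Z[1/2]\cong \HH_*(-;\Z[1/2])$ is \emph{ordinary} homology, not $\BP$-homology. The invariant the paper studies is the ``positive homology'' $\HH^{\mathscr{Q},+}_*(B\pi;\Z)\subset \HH_*(B\pi;\Z)$, and Theorem~\ref{maintwo} reduces the problem to showing $\phi_*([M])$ lies there. Brown--Peterson theory does appear, but only instrumentally in Section~\ref{realis}: the theories $\BP^{(\kappa,\ell)}$ are built to produce homology operations $\partial^{(\kappa,\ell)}$ that detect which classes in $\HH_*(B\Gamma;\Z/p^\alpha)$ \emph{fail} to come from $\Omega^{\SO}_*(B\Gamma)$, thereby allowing one to discard the problematic cycles (such as $\mathscr{T}(c_1,c_{2p^\kappa-1})$, cf.~Question~\ref{openprob}). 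Working directly in $\BP_*(B\pi)$ as you propose is a different program and would require its own positivity notion and bordism principle; the paper does not supply those.

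Second, and more seriously, the K\"unneth cross product alone does not generate $\HH_*(B\Gamma;\Z)$: the $\Tor$ terms are essential, and they are realized geometrically by \emph{Toda brackets} $\langle a,r,b\rangle$ of Baas--Sullivan cycles (Section~\ref{Toda}). Showing positivity of Toda brackets requires \emph{both} inputs to be positive (Corollary~\ref{summary}\ref{zwoa}), which fails for degree-one cycles represented by circles. This is exactly where the $p$-atoral hypothesis and the smooth-representability assumption in Theorem~\ref{representable} do their work: the delicate combinatorics of Sections~\ref{hom_p_1}--\ref{CF} (special cycles, Proposition~\ref{bplreppos}, generalized products of lens spaces, and the operations $\partial^{(\kappa,\ell)}$) are needed to circumvent the degree-one obstruction. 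Your sketch omits the Toda-bracket layer entirely, and your description of $p$-atorality as ``ruling out classes factoring through $(B\Z/p)^k$'' does not capture this; the actual role of atorality is to exclude the top product $c_1\otimes\cdots\otimes c_1$ so that every remaining special cycle has a lens-space factor of dimension $\geq 3$. Finally, the reduction to abelian $p$-groups in the paper goes via the Sylow cover and the Chinese remainder theorem rather than abstract $p$-localization; this is what lets one apply Theorem~\ref{representable} to each prime separately.
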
 

For the notion of $p$-atorality see Definition \ref{atoral}. 
For example, a  closed connected oriented manifold $M$ whose fundamental group is generated by fewer than $\dim(M)$ elements is $p$-atoral for all odd primes $p$, see Remark  \ref{ex:atoral} \ref{estrank}.

Theorem \ref{main}  contributes to the Gromov-Lawson-Rosenberg conjecture concerning  the existence of positive scalar curvature metrics on closed smooth manifolds, see  Rosenberg \cite{Ros_Surv}*{Conjecture 1.22}. 
It solves Problem 5.11 of Botvinnik and Rosenberg \cite{BR1} for odd $p$. 

For finite fundamental groups of odd order the Gromov-Lawson-Rosenberg conjecture can be formulated in the following concise way, see Rosenberg \cite{Ros}*{Conjecture 1.2}. 

\begin{conj} \label{ros} Let $M$ be a closed connected smooth manifold with finite  fundamental group of odd order. 
If the universal cover of  $M$ admits a positive scalar curvature metric, then $M$ admits a positive scalar curvature metric. 
\end{conj}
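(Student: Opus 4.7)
The conjecture strengthens Theorem~\ref{main} by dropping the abelianness, nonspin, and $p$-atorality hypotheses, so my plan is to extend the methods of this paper along each of these three axes while preserving the overall architecture: reduce the existence of a psc metric to a homological question via the Baas-Sullivan psc homology invariance principle introduced in the paper, and then realize the relevant homology classes by admissible products of psc (singular) manifolds.

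For the non-abelian case, I would use $p$-Sylow decomposition and transfer maps on $\BP$-homology and on singular psc bordism to reduce to $p$-groups of odd order. One then needs to formulate an appropriate non-abelian $p$-atorality condition; Quillen-style detection on elementary abelian subgroups combined with Brauer induction should allow the homological arguments behind Theorem~\ref{main} to be adapted to this setting. The spin case requires a separate treatment, since the primary obstruction becomes the Rosenberg index in $KO_*(C^*_r\pi_1(M))$; one must show that the existence of psc on the universal cover together with appropriate vanishing of higher $\widehat A$-genera forces this index to vanish. For odd-order $\pi_1(M)$, the character decomposition of $C^*_r\pi_1(M)$ into matrix algebras over cyclotomic fields should make this tractable and reduce it to case-by-case analyses indexed by the irreducible representations.

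The hardest step will be removing the $p$-atorality assumption. Atorality is precisely what ensures that enough generators of $\HH_*(B\pi_1(M);\Z_{(p)})$ are representable by psc singular manifolds built from admissible products of lens spaces and low-dimensional $\BP_*$-generators; without it, one must construct psc representatives for the remaining toral classes, which are detected by maps to $B(\Z/p)^k$ with $k$ large and are precisely the classes on which the admissible product toolkit in this paper fails. I expect the main obstacle to lie here, and a genuinely new construction — perhaps a refinement of the Baas-Sullivan singularity set modelled on Bockstein-type products, or a higher $\eta$-invariant certificate of psc on singular manifolds — will likely be needed to realize the toral classes.
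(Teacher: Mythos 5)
This statement is a \emph{conjecture}, not a result the paper proves: the paper explicitly records that Conjecture~\ref{ros} is false in dimension $4$ (by the cited work of Hanke, Kotschick and Wehrheim) and remains open in general in dimensions $\geq 5$; the paper only establishes the much weaker Theorem~\ref{main}, which adds the hypotheses that $\pi_1(M)$ is abelian, that $M$ is nonspin, and that $M$ is $p$-atoral for the relevant primes. Your text is accordingly not a proof but a research program, and by your own admission (``a genuinely new construction \dots will likely be needed'') it leaves the decisive steps open. Concretely: (1) as stated the conjecture has no dimension restriction, so any argument must fail in dimension $4$, which your sketch never addresses; (2) your plan to remove atorality by ``constructing psc representatives for the remaining toral classes'' runs directly against Remark~\ref{ex:atoral}\,(vii) of the paper, which speculates that $p$-toral manifolds for odd $p$ may admit \emph{no} psc metric at all --- if that speculation is correct, the toral classes are genuine obstructions and the conjecture itself is false, so no refinement of the Baas--Sullivan toolkit can realize them; (3) in the spin case you reduce to showing that the Rosenberg index vanishes, but vanishing of the index is only the necessary direction --- the hard open problem is the converse existence statement, and the paper explicitly leaves even the spin analogue of Theorem~\ref{main} (with ordinary homology replaced by connective real $K$-homology) for future work, noting that its homological computations do not carry over; (4) the non-abelian reduction via Sylow subgroups and transfers is plausible in outline, but the paper's computations (Sections~\ref{hom_p_1}--\ref{CF}) depend in an essential way on the explicit K\"unneth/Toda-bracket structure of $\HH_*(B\Gamma)$ for abelian $p$-groups, and no substitute is offered for general odd-order $p$-groups. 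In short, there is no proof here to compare with the paper's (nonexistent) one; the statement is open, and the sketch does not close any of the gaps that make it open.
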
 

Connected manifolds with odd order fundamental groups are orientable, and they are spin if and only if their universal covers are spin. 
Furthermore simply connected closed nonspin manifolds  of dimension at least $5$ admit positive scalar curvature metrics by Gromov and Lawson \cite{GL}*{Corollary C}. 
Hence, if Conjecture \ref{ros} holds, then each closed connected nonspin smooth manifold of dimension at least $5$ with fundamental group of odd order admits a positive scalar curvature metric, thus strengthening Theorem \ref{main}. 

Conjecture \ref{ros} holds in dimensions $1$ and $2$, and it holds in dimension $3$ by the geometrization theorem. 
In dimension $4$ it is false -- see Hanke, Kotschick and Wehrheim \cite{HKW} -- hence this case must be excluded. 
In  dimensions larger than or equal to $5$ it  holds for $p$-atoral manifolds whose fundamental groups are elementary abelian $p$-groups, where $p$ is an odd prime. 
This result is due to Botvinnik and Rosenberg \cites{BR1, BR2} and the author \cite{Ha15}, who discovered and corrected a gap in the original argument in \cites{BR1, BR2}.  

By  Kwasik and Schultz \cite{Kwasik}*{Theorem 1.8}, which can be generalized  to the nonspin case,  Conjecture \ref{ros} holds for manifolds of dimension larger than or equal to  $5$ whose fundamental  groups have periodic cohomology.

Conjecture \ref{ros} is false without assuming that $\pi_1(M)$ is of odd order, see the remarks after \cite{Ros}*{Theorem 1.3}.
Both this fact and the failure of the conjecture in dimension $4$ illustrate that the metric obtained from $\pi_1(M)$-averaging a positive scalar curvature metric on the universal cover of $M$  is in general not of positive scalar curvature.
Conjecture \ref{ros} remains open in general in dimensions larger than $4$.

\begin{defn} \label{atoral} 
Let $X$ be a topological space and let $p$ be a prime. 
A homology class $h \in \HH_d(X;\Z)$ is called {\em $p$-toral}, if there exist $\ell \in \N$, $\ell \geq 1$, and classes $c_1, \ldots, c_d \in \HH^1(X ;  \Z/ p^\ell )$ such that 
\[
     \big( c_1 \cup \cdots \cup c_d \big)(  h  )  \neq 0 \in \Z/p^\ell. 
\]  
Otherwise $h$ is called {\em $p$-atoral}.  

A closed oriented manifold $M$ of dimension $d$ is called {\em $p$-atoral} or {\em $p$-toral}, respectively, if the fundamental class $[M] \in \HH_d(M ; \Z)$  has the corresponding property.
\end{defn} 

\begin{rem}   \begin{enumerate}[label={(\roman*)}] 
\item  The $d$-torus $T^d = (S^1 \times \cdots \times S^1)^d$ for $d \geq 1$  is $p$-toral for all $p$, and so are all closed manifolds which are oriented bordant, over the classifying space $B (\Z/p)^d$, to the canonical map $T^d = B \Z^d \to B (\Z/p)^d$. 
\item  The $p$-atoral homology classes form a subgroup of $\HH_d(X;\Z)$. 
\item \label{classifying} A closed connected oriented manifold $M^d$ is $p$-toral if and only if $\phi_*([M]) \in \HH_d(B \pi_1(M);\Z)$ is $p$-toral, where $\phi: M \to B\pi_1(M)$ is the classifying map of the universal cover of $M$. 
This uses the fact that $\phi^* : \HH^1( B \pi_1(M) ; \Z / p^{\ell}) \to \HH^1(M ; \Z / p^{\ell})$ is an isomorphism for all  $\ell \geq 1$. 
\item \label{Sylow} Let $M^d$ be closed connected oriented with finite abelian fundamental group $\pi_1(M)$. 
Let $\psi : \overline{M} \to M$ be a connected cover corresponding to a  Sylow $p$-subgroup of $\pi_1(M)$. 
Then $M$ is $p$-toral, if and only if $\overline{M}$ is $p$-toral.
This follows from the relation
\[
     \big( \psi^*(c_1) \cup \cdots \cup \psi^*( c_d)  \big)(  [\overline{M}]  ) = \deg(\psi) \cdot  \big( c_1 \cup \cdots \cup c_d  \big)(  [M]  )
\]  
and from the fact that $\psi^* : \HH^1(M ; \Z/p^{\ell}) \to \HH^1(\overline{M} ; \Z/ p^{\ell})$ is an isomorphism for all $\ell \geq 1$ by our assumption on $\pi_1(M)$.
\item  \label{estrank}  Let $M^d$ be closed connected oriented and let $p$ be an  odd prime. 
Furthermore assume  that $\pi_1(M)$ is generated by fewer than $d$ elements. 
This implies that the abelianization of $\pi_1(M)$ is a product of fewer than $d$ cyclic groups. 
Then  $M$ is $p$-atoral since for $\ell \geq 1$ the cohomology group $\HH^1(M ; \Z/p^\ell )$ is generated by fewer than $d$ elements and  each element in $\HH^1(M ; \Z/p^{\ell})$ has square zero for odd $p$.  
\item In contrast the orientable real projective space $\RP^{2m-1}$ is $2$-toral for all $m \geq 1$. 
\item  One may speculate that $p$-toral manifolds for odd $p$ do not admit positive scalar curvature metrics. 
This would  yield  counterexamples to Conjecture \ref{ros}.  
\end{enumerate}
 \label{ex:atoral}
\end{rem}

In the spirit of other existence results for positive scalar curvature metrics on high dimensional manifolds the proof of Theorem \ref{main} is based on the propagation of positive scalar curvature metrics along surgeries of codimension at least three; see Gromov and Lawson \cite{GL} and Schoen and Yau \cite{SY}. 
In the  paper at hand this  technique is combined with the realization of singular homology classes by manifolds with Baas-Sullivan singularities \cite{baas1}. 
To this end we introduce and discuss the concept of {\em positive scalar curvature metrics on manifolds with Baas-Sullivan singularities} in Sections \ref{Baas_Sul} and \ref{admissible_prod}, which includes the discussion  of admissible products. 
In some particular cases positive scalar curvature metrics on simply connected manifolds with Baas-Sullivan singularities were studied by Botvinnik \cite{Bot01}. 

The main steps of the proof of Theorem \ref{main} are as follows. 
Let $\Omega^{\SO}_*$ denote the oriented bordism ring and fix a family $\mathscr{Q} = (Q_{4i})_{i \geq 1} $ of closed oriented manifolds of dimension $4i$ whose bordism classes form a set of polynomial generators of $\Omega^{\SO}_* / {\rm torsion}$, and each of which  is equipped with a metric of positive scalar curvature. 
Such families exist by the results in \cite{GL}. 
By \cite{baas1}, after inverting $2$ oriented bordism with singularities in $\mathscr{Q}$ is naturally isomorphic to singular homology; see Section \ref{background_BS}.  

Given a topological space $X$ we will define a subgroup  $\HH_*^{ \mathscr{Q},+}(X ;\Z) \subset \HH_*(X; \Z)$, called the  {\em positive homology of $X$ with respect to $\mathscr{Q}$}, see Definition \ref{poshom}. 
 Elements in this group are represented by maps from Baas–Sullivan manifolds admitting positive scalar curvature metrics to $X$. 
In particular positive homology classes need not be representable by smooth manifolds.
An important ingredient  for the  proof of Theorem \ref{main} is the following homology invariance principle, which we show at the end of Section \ref{Baas_Sul}.

\begin{thm} \label{maintwo} Let $M$ be a closed connected oriented smooth manifold of dimension $d \geq 5$ with odd order fundamental group and which is nonspin. 
Let $\phi : M \to B \pi_1(M)$ be the classifying map. 
Then $M$ admits a metric of positive scalar curvature if and only if $\phi_*([M]) \in \HH^{ \mathscr{Q} ,+}_d(B \pi_1(M); \Z)$.
\end{thm}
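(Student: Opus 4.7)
The forward direction is immediate from the definition of positive homology: if $M$ carries a psc metric $g$, then $(M,g,\phi)$ is itself a Baas--Sullivan manifold with empty singular strata equipped with a psc metric and a reference map to $B\pi_1(M)$, so $\phi_*[M] \in \HH^{\mathscr{Q},+}_d(B\pi_1(M);\Z)$ by the very definition.

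For the converse, suppose $\phi_*[M] \in \HH^{\mathscr{Q},+}_d(B\pi_1(M);\Z)$. Unwinding the definition produces a BS-manifold $W^d$ carrying a psc metric together with a reference map $\psi:W\to B\pi_1(M)$ such that $\psi_*[W]=\phi_*[M]$ in $\HH_d(B\pi_1(M);\Z)$. My plan is to first convert this homological identity into a BS-bordism identity and then propagate psc across the bordism. By the Baas--Sullivan theorem recalled in Section \ref{background_BS}, the natural transformation
\[
  \Omega^{\SO,\mathscr{Q}}_*(-)\longrightarrow \HH_*(-;\Z)
\]
is an isomorphism after inverting $2$. Since $|\pi_1(M)|$ is odd, $\HH_d(B\pi_1(M);\Z)$ is a finite odd-order abelian group in positive degrees, so it is already $\Z[1/2]$-local and injects into its $\Z[1/2]$-localization. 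Hence the classes $[M,\phi]$ and $[W,\psi]$ in $\Omega^{\SO,\mathscr{Q}}_d(B\pi_1(M))$ differ by $2$-torsion: for some $k\geq 0$ there exists a BS-bordism $Y$ over $B\pi_1(M)$ whose boundary consists of $2^k$ disjoint copies of $W$ and $2^k$ disjoint copies of $-M$, with the psc metric already in place on the $W$-end.

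The substantive step is to propagate the psc metric from the $W$-end through $Y$ to the $M$-end, via the BS-extension of the Gromov--Lawson--Schoen--Yau surgery principle developed in Sections \ref{Baas_Sul} and \ref{admissible_prod}. After merging components of $Y$ by interior $0$-surgeries so that each $M$-component is joined to some $W$-component, one arranges a handle decomposition on the $W$-side with all handles of index $\leq d-2$, corresponding to codim-$\geq 3$ surgeries. The nonspin hypothesis together with $d\geq 5$ is crucial for disposing of the $1$- and $2$-handle obstructions, since it permits admissible-product connected sums with simply connected nonspin psc manifolds, which exist in all dimensions $\geq 5$ by \cite{GL}*{Corollary C}. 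Surgery preservation of psc in the BS-setting then equips all $2^k$ copies of $M$ with psc metrics, and restricting to a single component yields a psc metric on $M$.

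The main technical obstacle is this propagation step: executing the handle reduction on the BS-bordism $Y$ while preserving psc near the Baas--Sullivan singularity strata and maintaining the admissible product structure along them. Carrying out Gromov--Lawson-type surgery in the presence of BS-singularities rather than on smooth manifolds is precisely what the formalism of Sections \ref{Baas_Sul} and \ref{admissible_prod} is designed to deliver; once that machinery is in place, Theorem \ref{maintwo} reduces to the three-step scheme sketched above.
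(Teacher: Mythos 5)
The forward direction and the localization step are fine, but your reduction to a BS-bordism $Y$ with $2^k$ copies of $W$ and $2^k$ copies of $M$ on its boundary opens a gap that your sketch does not close. To propagate positive scalar curvature across the bordism one needs (after smoothing corners, which is what Proposition \ref{bordprinc} actually does) a handle decomposition relative to the psc end with all handles of index $\leq d-2$, equivalently the inclusion of the non-psc end must be a $2$-equivalence. With $2^k>1$ copies of $M$ on that end this can never hold once the bordism is connected ($\pi_0$ already fails), and your merging of components by interior $0$-surgeries makes this worse: dually, joining the copies of $M$ inside a connected bordism forces index-one handles on the $M$-side, i.e.\ codimension-one surgeries on the psc side, which do not preserve positivity. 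You also cannot in general split $Y$ so that each component contains exactly one copy of $M$, and ``restricting to a single component'' at the end is circular, since the remaining copies of $M$ would then have to lie on the end that is required to carry psc already. The paper avoids the multiplicity altogether: it writes $[\phi\colon M\to B\pi_1(M)]=\beta+[M]$ with $\beta\in\tilde\Omega^{\SO}_d(B\pi_1(M))$, uses that this reduced bordism group has odd order to find $m$ with $2^m\beta=\beta$, and replaces the point class $[M]\in\Omega^{\SO}_d$ by a psc representative $N$ (possible since $d\geq 5$), so that the resulting $\mathscr{Q}$-bordism $V$ has exactly one copy of $M$ on its boundary, the rest being $\overline N\sqcup\coprod_{2^m}A$ with positive metrics. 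Some such absorption of the power of $2$ is the missing ingredient in your argument; it cannot be run inside $\Omega^{\SO,\mathscr{Q}}_d(B\pi_1(M))$ itself, which is not of odd order.

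Separately, you mischaracterize how the machinery of Sections \ref{Baas_Sul} and \ref{admissible_prod} enters. The paper never performs Gromov--Lawson surgery in the Baas--Sullivan category: Proposition \ref{bordprinc} smooths the corners of $V$, verifies via singularity-positive metrics and the scaling results (Corollary \ref{add:restrict}) that the smoothed-off boundary piece inherits positive scalar curvature, and then invokes the ordinary smooth bordism principle (Stolz's extension theorem). Its hypothesis, that $M\hookrightarrow V$ be a $2$-equivalence, is arranged beforehand by surgeries on the interior of $V$ involving no metric data; the nonspin and odd-order assumptions are used only to represent the cokernel of $\pi_2(M)\to\pi_2(V)$ by embedded $2$-spheres with trivial normal bundle, not via ``admissible-product connected sums'' with nonspin psc manifolds.
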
 

It hence remains to show that under the conditions of Theorem \ref{main} we have $\phi_*([M]) \in \HH^{ \mathscr{Q} ,+}_d(B \pi_1(M); \Z)$ for some orientation $[M]$ of $M$. 
For this goal we first study  the positive homology $\HH^{\mathscr{Q}, +}_*(B \Gamma; \Z)$ for finite abelian $p$-groups $\Gamma$.  

In this case the homology of $B \Gamma$ can inductively be computed by an exact K\"unneth sequence (with $\alpha \geq 1$) 
\[
    0 \to \HH_*(B\Gamma)  \otimes \HH_*(B \Z/p^{\alpha}) \stackrel{\times}{\longrightarrow} \HH_*(B  \Gamma \times B\Z/p^{\alpha} )
   \longrightarrow \ \Tor ( \HH_*(B \Gamma), \HH_*(B \Z/p^{\alpha} ))_{*-1}  \to 0 \, . 
 \]
The cross product can be realized by admissible products of manifolds with Baas-Sullivan singularities,  and the same is true for the torsion product, which is related to a homological Toda bracket. 
The construction of admissible products and Toda brackets for Baas-Sullivan manifolds with positive scalar curvature is non-trivial and will be developed  in Sections \ref{admissible_prod} and \ref{Toda} of our paper. 

By a variant of the well known  ``shrinking one factor'' argument (see Proposition \ref{posproduct}) the cross product of two homology classes is positive, if one of the factors is positive.    
However we can in general show positivity  of Toda brackets only if {\em both}  of the factors are positive; compare Corollary \ref{summary}. 
This does not cover Toda brackets involving homology classes of degree one (represented by circles), even though  these Toda brackets are $p$-atoral.
Although we can show the positivity of many Toda brackets involving degree one classes by a systematic use of group homomorphisms in Proposition \ref{calculate}, there are  some Toda brackets  whose positivity remains obscure; see Question \ref{openprob}.  

In order to bypass this  issue we use the fact that the homology class $\phi_*([M]) \in \HH_d(B \pi_1(M); \Z)$ is of a restricted type, since $M$ is assumed to be a smooth manifold. 
This fact is explored in the following result.

\begin{thm} \label{representable} Let  $p$ be an odd prime and let $\Gamma$ be a finite abelian $p$-group.  
Then all $p$-atoral classes  in the image of $\Omega^{SO}_*(B \Gamma) \to \HH_*(B \Gamma; \Z )$ are positive. 
\end{thm}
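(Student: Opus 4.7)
The plan is to induct on the rank (the minimal number of cyclic summands) of $\Gamma$. The base case $\Gamma = 0$ is trivial since every positive-degree class in $\HH_*(\mathrm{pt};\Z)$ vanishes. For the inductive step, write $\Gamma = \Gamma' \times \Z/p^{\alpha}$ and decompose a given $p$-atoral class $h \in \HH_d(B\Gamma;\Z)$ in the image of $\Omega^{\SO}_d$ via the Künneth sequence displayed in the introduction as $h = h_\times + h_T$, where $h_\times$ lies in the image of the cross product and $h_T$ maps nontrivially to a sum of torsion products in $\Tor(\HH_*(B\Gamma'),\HH_*(B\Z/p^{\alpha}))_{*-1}$. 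Using that oriented bordism also satisfies a Künneth-type splitting after inverting $2$, I would arrange that both $h_\times$ and $h_T$ are themselves represented by oriented bordism classes; the $p$-atorality hypothesis is inherited by each piece through the dual cohomological Künneth formula.

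For the cross-product piece, each nontrivial summand $h' \times h''$ is $p$-atoral if and only if in each tensor component at least one of $h'$, $h''$ is $p$-atoral in its own $B\Gamma'$ or $B\Z/p^{\alpha}$. The inductive hypothesis applied to $\Gamma'$, together with the positivity of the standard generators of $\HH_*(B\Z/p^{\alpha})$ (realized by lens-space constructions admitting positive scalar curvature metrics), then yields positivity of the atoral factor. Proposition \ref{posproduct} (shrinking one factor) then shows that the full cross product is positive.

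For the Tor piece, $h_T$ is realized as a sum of admissible Toda brackets of classes from the two factors, as developed in Sections \ref{admissible_prod} and \ref{Toda}. By Corollary \ref{summary} such Toda brackets are positive provided both input classes are positive; this again follows from the inductive hypothesis \emph{except} when one of the inputs lies in $\HH_1(B\Z/p^{\alpha}) \cong \Z/p^{\alpha}$. This degree-one case is precisely the obstacle flagged in Question \ref{openprob}, and constitutes the main difficulty of the proof.

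To overcome it, I would exploit the hypothesis that $h$ is represented by a genuine smooth manifold rather than only by a Baas-Sullivan manifold. This additional rigidity should allow me to rewrite the problematic degree-one Toda brackets by precomposing the classifying map with suitable group endomorphisms of $\Gamma$ and applying the techniques of Proposition \ref{calculate}. The $p$-atorality of $h$ enters crucially at this stage: it rules out the toral cup-product patterns of degree-one classes, so that after combining contributions from a sufficiently rich family of endomorphisms the residual Toda brackets can be expressed through admissible products of classes already known to be positive. Verifying this reduction uniformly across all Toda bracket summands will be the principal technical hurdle.
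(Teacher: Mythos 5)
Your outline tracks the paper's general strategy (reduce via the K\"unneth structure to cross products and Toda brackets, use Corollary \ref{summary}, isolate the degree-one Toda brackets as the obstacle), but the step you leave as ``the principal technical hurdle'' is in fact the entire content of the theorem, and the mechanism you propose for it does not work. You suggest resolving the problematic brackets by precomposing with group endomorphisms of $\Gamma$ and applying the techniques of Proposition \ref{calculate}, with $p$-atorality excluding the bad patterns. Both prongs fail: the paper points out explicitly (discussion before Question \ref{openprob}) that for $m=p^{\kappa}$ one has $\gamma^{m}\equiv\gamma \bmod p$ in \eqref{coprod}, so no combination of the endomorphism-induced classes can separate $\mathscr{T}(c_1,c_{2p^{\kappa}-1})$ from $\mathscr{T}(c_{2p^{\kappa}-1},c_1)$ when the two cyclic factors have equal exponent; and these brackets \emph{are} $p$-atoral, so atorality gives you nothing against them. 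The actual resolution is of a completely different nature: one shows that such cycles are not in the image of $\Omega^{\SO}_*(B\Gamma)\to \HH_*(B\Gamma)$ at all, by constructing stable operations $\partial^{(\kappa,\ell)}$ on $\HH_*(-;\Z/p^{\ell})$ as Atiyah--Hirzebruch differentials for the Baas--Sullivan theories $\BP^{(\kappa,\ell)}$ (Proposition \ref{constrbpj}), computing them on $BG_{\alpha}$ via a formal-group-law/Euler-class argument (Lemma \ref{formal}, Proposition \ref{compute_diff}; see Example \ref{beispiel} for exactly your problematic cycle), and then proving the structure theorem (Proposition \ref{productlens}, via Propositions \ref{Vandermonde}, \ref{difficult}, \ref{description}) that classes killed by these operations and the Bockstein are, modulo $p$, spanned by generalized products of lens spaces. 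Only then does bordism-representability get used, through Lemma \ref{divide} and Proposition \ref{cl}, with the residual $p$-divisible atoral part handled by Proposition \ref{bplreppos}. Your proposal contains no tool that detects non-representability, so the gap is not a technical verification but a missing idea.

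A secondary issue: your inductive K\"unneth splitting assumes that the cross-product piece and the Tor piece are separately bordism-representable and separately $p$-atoral. Neither is automatic (the K\"unneth decomposition of a single representable class need not lift summand-by-summand to $\Omega^{\SO}$, and atorality of a sum does not pass to its summands in general). The paper avoids this by working at the chain level with the explicit decomposition \eqref{dirsum} and the special-cycle generators of Proposition \ref{generated}, splitting off only the specific piece $c'\otimes c_1\otimes\cdots\otimes c_1$ supported on the factors of minimal exponent, for which the needed properties are verified directly (positivity of the remainder by Proposition \ref{bplreppos}, membership of $[c']$ in $\RH_*$ by Lemma \ref{divide}). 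You would need to either justify your splitting claims or reorganize the reduction along these lines.
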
 

The proof of Theorem \ref{representable} will be provided in Section \ref{CF}. 
As a preparation we investigate the (ordinary) homology of abelian $p$-groups $\Gamma$ in Section \ref{hom_p_1}. 
For $\Gamma =  (\Z/p^{\alpha})^n$ and $p$-atoral homology classes not divisible by $p$ the proof of Theorem \ref{representable} is especially difficult and relies on the fact that these homology classes  can be represented by generalized products of $\Z/p^{\alpha}$-lens spaces modulo elements divisible by $p$. 
We refer to Section \ref{realis} for further details.

Now let $M$ be a manifold as in Theorem \ref{main}. 
Let $p$ be an (odd)  prime dividing the order of $\pi_1(M)$, let  $\overline{M} \to M$ be the connected cover corresponding to the inclusion of a Sylow $p$-subgroup $\Gamma \subset \pi_1(M)$ and let $\overline{\phi} : \overline M \to B \Gamma$ be the classifying map.
By Remark  \ref{ex:atoral} \ref{Sylow} the manifold $\overline{M}$ is $p$-atoral, and by construction $\overline{\phi}_*( [\overline{M}]) \in \HH_d(B \Gamma)$ lies in the image of $\Omega^{\SO}_d(B \Gamma) \to \HH_d(B \Gamma)$. 
Using Theorem \ref{representable} the class $\overline{\phi}_*( [\overline{M}]) \in \HH_d(B \Gamma)$ is positive. 
Hence also the class 
 \[
    [\pi_1(M) : \Gamma] \cdot \phi_*([M]) =     \psi_*\big( \overline{\phi}_*( [\overline{M}])\big)  \in \HH_d(B \pi_1(M)) 
 \]
is positive, where $[\pi_1(M) : \Gamma]$ denotes the index of $\Gamma$ in $\pi_1(M)$ and $\psi : B\Gamma \to B\pi_1(M)$ is  induced by the subgroup inclusion $\Gamma \subset \pi_{1}(M)$. 

By the Chinese remainder theorem we find $\alpha_{p} \in \Z$, where $p$ runs through the primes dividing the order of $\pi_1(M) $, such that $ 1 = \sum_{p}  \alpha_{p} \cdot [\pi_1(M) : \Gamma_p]$ where $\Gamma_p \subset \pi_1(M)$ denotes some Sylow $p$-subgroup. 
Hence 
\[
    \phi_*([M]) = \sum_p \alpha_p \cdot [\pi_1(M) : \Gamma_p] \cdot \phi_*([M]) \in \HH^{ \mathscr{Q} ,+}_d(B \pi_1(M); \Z ) \, , 
\]
finishing the proof of Theorem \ref{main}.

We conjecture that Theorem \ref{main} also holds for spin manifolds with vanishing $\alpha$-invariants. 
A proof should be based on real connective $K$-homology instead of ordinary homology;  compare Rosenberg and Stolz \cite{RS}. 
However our homological computations do not  carry over to this case in an obvious way. 
Hence we  leave the spin analogue of Theorem \ref{main} for later investigation.

\medskip

{\em Acknowledgments.} This project was initiated when I was visiting the University of Notre Dame some years ago. 
To Stephan Stolz I owe the idea to study positive scalar curvature metrics on manifolds with Baas-Sullivan singularities for proving Theorem \ref{main}. 
Substantial parts of this research were carried out at the MPI Bonn and the Courant Institute of Mathematical Sciences (NYU). 
The hospitality of the named institutions  is gratefully acknowledged. 
I appreciate a number of helpful suggestions by an anonymous referee, which  led to a significant improvement of the manuscript. 
Many thanks also go to John Bourke from the MSP production team. 

This research has been supported by the Special Priority Programme SPP 2026 {\em Geometry at Infinity}  funded by the DFG.

\section{Review of manifolds with Baas-Sullivan singularities} \label{background_BS} 

We recall some terminology, following mainly \cite{Bunke}*{Section 3.3}, and fix some notation. 
Smooth $d$-dimensional {\em manifolds with corners} $V$ are modeled on subsets  $N(k,U) =  U \times [0,1)^k \subset \R^d$ for $0 \leq k \leq d$, where $U \subset \R^{d-k}$ is open, with smooth transition maps of the form $(x, t_1, \ldots, t_k) \mapsto (x', t_{\sigma(1)}, \ldots, t_{\sigma(k)})$ for some permutation $\sigma$. 
For a precise definition we refer to \cite{Bunke}*{Definition 3.14.} and the subsequent discussion. 
In particular, manifolds with corners are equipped with preferred local collar structures.
\footnote{Some authors use different conventions, compare, for instance, \cite{Joyce}*{Definition 2.2} .} 
The subset  $U \times \{0\} \subset N(k,U)$ defines the {\em points of codimension $k$} in $N(k,U)$. 

Let $V$ be a $d$-dimensional manifold with corners. 
Every point $x \in V^d$ has a  {\em codimension} $0 \leq c(x) \leq d$,  defined with respect to any local chart around $x$. 
This induces a decomposition of $V$ into smooth (in general non-compact) connected submanifolds of $V$, called {\em strata}, of various codimensions. 
Each stratum admits a canonical completion (by adding boundary points to local models), which is itself a manifold with corners, see \cite{Bunke}*{Definition 3.17}. 
The union of strata of codimension at least $1$ in $V$ is denoted by  $\partial V$. 

As usual we require by definition that each $x\in V$ lies in the closure of exactly $c(x)$ codimension-$1$ strata of $V$.
In this case the completions of strata coincide with their respective closures in $V$ (note that this is not true  for the $1$-gon, for example), which are called {\em connected faces} of $V$. 

Manifolds with Baas-Sullivan singularities were  introduced in \cite{baas1}. 
Let us recall some features of the theory which are relevant for our discussion. 
A {\em decomposed manifold} is a  manifold $V$ with corners together with a decomposition 
\[
   \partial V = \partial_0 V  \cup \cdots \cup \partial_n V \, , 
\]
for some $n \in \N$, where each $\partial_i V$ is a disjoint union of connected codimension-$1$ faces of $V$ which is globally collared in $V$ and each connected codimension-$1$ face of $V$ is contained in exactly one $\partial_i V$, see \cite{baas1}*{Definition 2.1}. 
Each $\partial_i V$ has an induced structure of a decomposed manifold by setting $\partial_j ( \partial_i V) = \partial_i V \cap \partial_j V$ for $j \neq i$, and $\partial_j  ( \partial_j V) = \emptyset$ for $0 \leq j \leq n$, compare \cite{baas1}*{page 283}.

\begin{defn} We call the decomposed manifold $\partial_0 V$ the {\em boundary} of $V$. 
If $V$ is compact and $\partial_0 V = \emptyset$, then $V$ is called {\em closed}. 
\end{defn} 

Similar to  \cite{baas1}*{Definition 2.2} we fix a family of closed smooth manifolds $\mathscr{P}= ( P_0 = *, P_1, P_2, \ldots)$, called {\em singularity types}. 
For  $n \in \N$ we set $\mathscr{P}_n= ( P_0, \ldots, P_n)$.
By definition,  a {\em $\mathscr{P}_n$-manifold} is a family of decomposed manifolds 
\[
   A  = ( A(\omega))_{\omega \subset \{0, \ldots, n\}}, 
\]
where $\partial A( \omega) = \partial_0 A(\omega) \cup \cdots \cup \partial_n A(\omega)$,  with $\partial_i A(\omega) = \emptyset$ for $i \in \omega$,  together with  isomorphisms $\partial_i A(\omega) \cong  A(\omega,i) \times P_i$ of decomposed manifolds for $i \in \{ 0, \ldots, n\} \setminus  \omega$. 
\footnote{Some authors use the ``Bockstein'' notation $\beta_{\omega} A$ instead of $A(\omega)$.}
Here we set $\partial_j  ( A( \omega, i) \times P_i) := \partial_j A(\omega,i) \times P_i$ for $0 \leq j \leq n$ and we write $A(\omega, i)$ instead of $A(\omega \cup \{i\})$. 
We also use the shorthand $A$ for the decomposed manifold $A(\emptyset)$.

By definition the following {\em compatibility condition} is required to hold. 
\begin{cond} \label{compcond} 
For all $i, j \notin \omega$ with $i \neq j$  the isomorphisms
\begin{eqnarray*} \label{compa} 
  \partial_iA( \omega) \cap \partial_j A(\omega) = \partial_j (  \partial_i A(\omega) ) & \cong &  \partial_j   ( A(\omega, i) \times P_i) = \partial_j A (\omega, i) \times P_i   \cong A(\omega, i, j) \times P_j \times P_i  \, , \\
  \partial_ i A( \omega) \cap \partial_j  A(\omega)   = \partial_i ( \partial_j A(\omega) ) & \cong & \partial_i    (  A(\omega, j) \times P_j ) = \partial_i A(\omega, j) \times P_j \cong A(\omega, j , i ) \times P_i \times P_j 
\end{eqnarray*} 
coincide after composing one of them with the interchange map $ P_j \times P_i \to P_i \times P_j$.
\end{cond} 

Note that each $\mathscr{P}_n$-manifold $A$ can be regarded as a $\mathscr{P}_{n+1}$-manifold in a canonical way by setting $\partial_{n+1} A := \emptyset$. 
For a $\mathscr{P}_n$-manifold $A$ we define  the  {\em union of singular strata}  of $A$ as
\[
     \sing(A) := \bigcup_{1 \leq i \leq n} \partial_i A \, , 
\]
so that, obviously, $\partial A =   \partial_0 A \cup \sing(A)$ and $ \sing(\partial_0 A) =  \partial_0 A \cap \sing(A)$.

There is a bordism theory $\Omega^{\mathscr{P}_n}_* (-) $, which we call {\em bordism with singularities in $\mathscr{P}_n$} -- compare \cite{baas1}*{page 284 ff.}, where this theory is denoted by $M(\mathscr{P}_n)_*(-)$. 
Given  a  pair of topological spaces $(X,Y \subset X)$ elements in $\Omega^{\mathscr{P}_n}_d(X,Y)$ are by definition  represented by continuous maps $f : A^d \to X$, where (see \cite{baas1}*{Definitions 2.2.~and  2.3.}) 
\begin{enumerate}[label={(\roman*)}] 
   \item $A$ is a compact $d$-dimensional $\mathscr{P}_n$-manifold; 
   \item \label{uksi} on local  models $U \times [0,1)^k $ the map $f$ factors through the projections 
   \[
      U \times [0,1)^k \stackrel{\pr_U}{\longrightarrow}  U;
   \] 
   \item  \label{kaksi} for  all $i \in \{1, \ldots, n\}$ the restriction $f|_{\partial_i A} $ factors as  
   \[
       \partial_i A \cong A(i)  \times P_i \stackrel{\pr_{A(i) }}{\longrightarrow}  A(i)  \to X;
   \]
      \item $f( \partial_0 A ) \subset Y$.  
\end{enumerate}

\begin{defn} \label{mapbs} A continuous map $f : A^d \to X$ with properties \ref{uksi}  and \ref{kaksi} is called {\em compatible} with the singularity structure of $A^d$. 
\end{defn} 

\begin{defn} \label{defbordsing} The homology theory obtained in the limit $n \to \infty$ is called {\em bordism with singularities in $\mathscr{P}$} and denoted by $\Omega^{\mathscr{P}}_*(-)$.
\end{defn} 

There is a straightforward generalization to bordism with tangential structures. 
In this paper we will be working with {\em oriented bordism with singularities} $\Omega^{\SO, \mathscr{P}_n}_*(-)$ with  $n \geq 0$ or $\Omega^{\SO, \mathscr{P}}_*(-)$, where we assume  that 
\begin{itemize} 
    \item all singularity types $P_i$ are even dimensional;
    \item all $P_i$ and $A(\omega)$ are oriented;
   \item \label{orientbla} for $i \notin \omega$ and with respect to the induced orientation on $\partial_i A(\omega)$ (determined by the outward normal) the given isomorphism $\partial_i A(\omega) \cong A(\omega, i) \times P_i$ is orientation preserving,  if an even number of elements in $\omega$ are larger than $i$, and orientation reversing otherwise.  
 \end{itemize}
In a similar way one may define {\em spin bordism with singularities} $\Omega^{{\rm Spin}, \mathscr{P}}_*(-)$, but this theory will not be considered in this paper.

\begin{constr} \label{constr:trafo} For $n \geq 0$ we shall define natural transformations of homology theories
\[
  u  :   \Omega^{\SO, \mathscr{P}_n}_*(-)   \to \HH_*(- ; \Z)  \, . 
\]
Let  $(X,Y)$ be a pair of topological spaces and let $f : A^d \to X$ represent an element in $\Omega^{\SO, \mathscr{P}_n}_d(X,Y)$ with a connected oriented compact $\mathscr{P}_n$-manifold  $A$. 

 Let $A'$ be obtained from $A$ by passing to the quotient space resulting from the identifications $(x, p) \sim (x,p')$ on $\partial_i A = A(i) \times P_i$ for $i = 1, \ldots, n$,  $x \in A(i)$ and $p, p' \in P_i$. 
Intuitively this process may be regarded as ``coning off''  the singularity types $P_1, \ldots, P_n$ in $A$ and thereby introducing genuine singularities.
It is shown by a straightforward computation that  $H_d(A', (\partial_0 A)' ; \Z) \cong \Z$ (recall $\dim P_i \geq 2$ for $i \geq 1$), with a preferred generator $[A', (\partial_0 A)']$ corresponding to  the given orientation of $A$.  
By assumption $f$ factors through a map $f' : ( A', (\partial_0 A)' )  \to (X, Y)$, and we define
\begin{equation} \label{def:u} 
        u( [ f : A \to X ] ) := f'_*( [ A' , (\partial_0 A)' ]) \in \HH_d( X,Y; \Z) \, . 
\end{equation} 
Passing to the limit $n \to \infty$ we also obtain a natural transformation 
\[
  u  :   \Omega^{\SO, \mathscr{P}}_*(-)   \to \HH_*(- ; \Z)  \, . 
\]
\end{constr} 

 By \cite{Nov60} the oriented bordism ring $\Omega^{\SO}_*$ modulo torsion is a polynomial ring. 
There are closed oriented manifolds $Q_1,Q_2,\ldots$ with $\dim Q_i=4i$  such that
\begin{equation*}
\Omega^{\SO}_*/{\text{torsion}}\cong\mathbb{Z}[[Q_1],[Q_2],\ldots] \, , 
\end{equation*}
where $[Q_i] \in \Omega^{\SO}_{4i}$ denotes the bordism class represented by $Q_i$. 
Since $\Omega^{\SO}_*$ contains no odd torsion  \cite{milnorcomplex} the sequence $([Q_i])_{i\geq 1}$ is a regular sequence in $\Omega^{\SO}_* \otimes \Z[1 / 2 ]$. 
Setting $\mathscr{Q} := (Q_0 = *, Q_1, Q_2, \ldots)$  we  arrive at the following fundamental result from \cite{baas1}. 

\begin{prop}  \label{isobordhom} For all $(X,Y)$ the natural transformation $u$ defined in  \eqref{def:u} induces an isomorphism
\[
  u :   \Omega^{SO, \mathscr{Q}}_*(X, Y)\otimes \Z[1/2]  \to \HH_*(X, Y; \Z[1/2]) \, . 
\]
\end{prop}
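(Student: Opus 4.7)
The plan is to use the standard comparison principle for generalized homology theories. Both $\Omega^{SO,\mathscr{Q}}_*(-)\otimes\Z[1/2]$ and $\HH_*(-;\Z[1/2])$ are homology theories on the category of CW pairs, so it suffices to show that $u$ is an isomorphism on a point, i.e.\ that
\[
u : \Omega^{SO,\mathscr{Q}}_*(*)\otimes \Z[1/2] \longrightarrow \HH_*(*;\Z[1/2]) = \Z[1/2]
\]
is an isomorphism. (The fact that $\Omega^{SO,\mathscr{P}}_*(-)$ satisfies excision, the long exact sequence of a pair, and the wedge axiom is part of the foundational package of \cite{baas1}; naturality of $u$ is immediate from its definition on representatives.)

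Next I would proceed inductively on $n$, proving that for every $n\geq 0$ the coefficient ring is
\[
\Omega^{SO,\mathscr{Q}_n}_*(*)\otimes\Z[1/2] \cong \bigl(\Omega^{SO}_*\otimes\Z[1/2]\bigr)/\bigl([Q_1],\ldots,[Q_n]\bigr),
\]
with the isomorphism compatible with $u$. The base case $n=0$ is Novikov's computation that $\Omega^{SO}_*\otimes\Z[1/2]$ is the polynomial ring on $[Q_1],[Q_2],\ldots$ together with the elementary observation that $u$ on a closed oriented manifold $A$ sends $[A]$ to the image of its fundamental class in $\HH_*(*;\Z[1/2])=\Z[1/2]$, which is $0$ in positive degrees. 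For the inductive step the key tool is the Bockstein--Sullivan long exact sequence associated with passing from $\mathscr{Q}_{n-1}$ to $\mathscr{Q}_n$: given a $\mathscr{Q}_n$-manifold, removing the collared face $\partial_n A\cong A(n)\times Q_n$ produces a $\mathscr{Q}_{n-1}$-manifold, while $A(n)$ is a $\mathscr{Q}_{n-1}$-manifold of one lower dimension modulo the relation $\cdot [Q_n]$. This yields, for any space $X$, a long exact sequence
\[
\cdots \to \Omega^{SO,\mathscr{Q}_{n-1}}_{*-\dim Q_n}(X)\xrightarrow{\,\cdot[Q_n]\,}\Omega^{SO,\mathscr{Q}_{n-1}}_*(X)\to\Omega^{SO,\mathscr{Q}_n}_*(X)\to \Omega^{SO,\mathscr{Q}_{n-1}}_{*-\dim Q_n-1}(X)\to\cdots.
\]
Since the sequence $([Q_i])_{i\geq 1}$ is regular in $\Omega^{SO}_*\otimes\Z[1/2]$, the map $\cdot[Q_n]$ is injective on the coefficients of $\Omega^{SO,\mathscr{Q}_{n-1}}_*(*)\otimes\Z[1/2]$ by the inductive hypothesis, so the long exact sequence breaks into short exact sequences giving precisely the claimed quotient description.

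Taking the colimit as $n\to\infty$ and observing that $([Q_i])_{i\geq 1}$ generates the augmentation ideal of $\Omega^{SO}_*\otimes\Z[1/2]$, we conclude that $\Omega^{SO,\mathscr{Q}}_*(*)\otimes\Z[1/2]$ is concentrated in degree zero where it equals $\Z[1/2]$, and that $u$ matches the generator on both sides. Combining this with the comparison theorem for homology theories (in the form of an Atiyah--Hirzebruch-type spectral sequence, or the standard induction over CW skeleta using the five lemma) proves that $u$ is an isomorphism for all pairs $(X,Y)$ of CW type. The main obstacle is setting up the Bockstein--Sullivan exact sequence rigorously in the framework of $\mathscr{P}_n$-manifolds with corners and verifying that the connecting homomorphism is indeed multiplication by $[Q_n]$; this is a somewhat delicate, but well-established, geometric argument where one must keep careful track of the induced orientations and the compatibility condition (Condition \ref{compcond}) along iterated faces.
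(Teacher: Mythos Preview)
Your proposal is correct and follows the standard route. Note, however, that the paper does not actually prove this proposition: it is stated as ``the following fundamental result from \cite{baas1}'' and simply cited. Your outline --- reduce to a point via the comparison principle for homology theories, then compute the coefficients inductively using the Bockstein--Sullivan long exact sequence and the regularity of $([Q_i])_{i\geq 1}$ in $\Omega^{\SO}_*\otimes\Z[1/2]$ --- is exactly the argument one finds in Baas's paper, so in effect you have reconstructed the cited proof.
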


\begin{cor} \label{iso_odd_order} 
Let $\Gamma$ be a finite group of odd order. 
Then  the map 
\[
  u :   \Omega_*^{\SO, \mathscr{Q}}( B\Gamma)  \rightarrow \HH_*(B \Gamma; \Z) 
\]
is surjective. 
\end{cor}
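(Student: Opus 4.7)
My plan is to deduce the corollary directly from Proposition \ref{isobordhom} by exploiting the fact that, because $|\Gamma|$ is odd, $\HH_n(B\Gamma;\Z)$ carries no $2$-torsion for $n\geq 1$. More precisely, a standard transfer argument shows that $\widetilde{\HH}_*(B\Gamma;\Z)$ is annihilated by $|\Gamma|$, so for every $n\geq 1$ multiplication by any power of $2$ is an automorphism of $\HH_n(B\Gamma;\Z)$; equivalently, the canonical map
\[
   \HH_n(B\Gamma;\Z) \longrightarrow \HH_n(B\Gamma;\Z)\otimes\Z[1/2] = \HH_n(B\Gamma;\Z[1/2])
\]
is an isomorphism in positive degrees.

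Given this, surjectivity is essentially formal. Let $n\geq 1$ and let $x\in \HH_n(B\Gamma;\Z)$ be of (odd) order $m$. Proposition \ref{isobordhom} provides a class $y\in \Omega^{\SO,\mathscr{Q}}_n(B\Gamma)$ and an integer $k\geq 0$ such that $u(y)\otimes 2^{-k}=x\otimes 1$ in $\HH_n(B\Gamma;\Z[1/2])$; by the displayed isomorphism this is equivalent to the identity $u(y)=2^k\cdot x$ in $\HH_n(B\Gamma;\Z)$. Choosing $\ell\in\Z$ with $2^k\ell\equiv 1\pmod{m}$, one obtains $u(\ell\cdot y)=\ell\cdot 2^k\cdot x=x$, which proves surjectivity in positive degrees. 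In degree $0$, the class of a point regarded as a $\mathscr{Q}$-manifold over $B\Gamma$ maps under $u$ to a generator of $\HH_0(B\Gamma;\Z)\cong\Z$, so $u$ is surjective there as well.

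There is no real obstacle here; the statement is a $2$-local cleanup of Proposition \ref{isobordhom}, relying only on the observation that odd-order group homology is already uniquely $2$-divisible in positive degrees.
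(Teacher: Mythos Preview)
Your proof is correct and follows essentially the same approach as the paper: both arguments observe that $\widetilde{\HH}_*(B\Gamma;\Z)$ is odd torsion (so uniquely $2$-divisible), invoke Proposition~\ref{isobordhom}, and clear the resulting power of $2$. The paper phrases the last step as choosing $m\geq m_0$ with $2^m x = x$, while you choose a multiplicative inverse of $2^k$ modulo the order of $x$; these are equivalent formulations of the same idea.
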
 

\begin{proof} This holds  in degree $0$, when source and target of $u$ are equal to $\Z$. 
Let $d \geq 1$. 
Since $\Gamma$ is of odd order the homology group $\HH_d(B \Gamma; \Z)$ is abelian of odd order. 
Hence for any $m_0 \geq 0$ and any $x \in \HH_d(B \Gamma; \Z)$ there exists $m \geq m_0$ with $2^m \cdot x = x$. 
The claim is hence implied by Proposition \ref{isobordhom} by clearing denominators. 
\end{proof}

In other words: Each homology class in $\HH_*(B \Gamma; \Z)$ is represented by a $\mathscr{Q}_n$-manifold for some $n$ (which has to be larger than $0$ in general, compare Example 7.6). 
Next we will  introduce  and study the notion of positive scalar curvature metrics on these objects.

\section[Positive scalar curvature on such manifolds]{Positive scalar curvature on manifolds with Baas-Sullivan singularities}  \label{Baas_Sul}

\begin{defn} \label{defadmissible} 
An {\em admissible} Riemannian metric on a manifold with corners $V^d$ is a smooth Riemannian metric $g$ on $V$ which on each local model $U \times [0,1)^k $ restricts to a product metric $ g^U  \oplus \eta $. 
Here  and in the following $\eta$ denotes the standard Euclidean metric and $g^U$ is some Riemannian metric on $U \subset \R^{d - k}$. 
\end{defn}

\begin{defn} \label{pos_sing} 
A family of {\em Riemannian singularity types} is a family of singularity types $\mathscr{P} = (P_0 = *, P_1, P_2, \ldots )$ together with Riemannian metrics $h_i$ on $P_i$ for $i \geq 1$. 

We call a family of Riemannian singularity types {\em positive} if each metric $h_i$ for $i \geq 1$  is of positive scalar curvature. 
\end{defn} 

\begin{defn} \label{dist} Let $\mathscr{P}$ be a  family of Riemannian singularity types and let $A$ be a $\mathscr{P}_n$-manifold, possibly with boundary. 
An admissible metric $g$ on $A = A( \emptyset) $ is called {\em $\mathscr{P}$-compatible}  if for each  $\omega \subset \{1, \ldots, n\}$  there is an admissible metric $g(\omega)$ on $A(\omega)$ such that $g = g ( \emptyset)$ and  the metric $g(\omega)$ restricts to the product metric $g(\omega, i ) \oplus h_i$ on $\partial_i A(\omega)  \cong A(\omega, i ) \times P_i$ for $i \in \{ 1, \ldots, n\} \setminus \omega$.
\end{defn}

\begin{lem} \label{compat} Each $\mathscr{P}_n$-manifold admits a $\mathscr{P}$-compatible metric. 
\end{lem}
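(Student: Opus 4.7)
The plan is to construct all the metrics $g(\omega)$ for $\omega \subset \{1,\ldots,n\}$ simultaneously by a downward induction on $|\omega|$. The base case is $\omega=\{1,\ldots,n\}$, where by definition $\partial_i A(\omega)=\emptyset$ for every $i\in\omega$, so that $A(\omega)$ is a manifold with corners whose only decomposition piece is $\partial_0 A(\omega)$. On any manifold with corners there is an admissible Riemannian metric, produced by covering with charts of the form $U\times[0,1)^k$, putting the product metric $g^U\oplus\eta$ on each (for any choice of $g^U$), and gluing via a partition of unity that is constant in the normal collar directions; this furnishes $g(\omega)$ with no further constraint.

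For the inductive step, fix $\omega$ and suppose $g(\omega')$ has already been built for all $\omega'\supsetneq\omega$. For every $i\in\{1,\ldots,n\}\setminus\omega$ the face $\partial_i A(\omega)\cong A(\omega,i)\times P_i$ inherits the prescribed product metric $g(\omega,i)\oplus h_i$. Before extending these into $A(\omega)$, I would first verify that they agree on all corner intersections. On $\partial_i A(\omega)\cap\partial_j A(\omega)$ with $i\neq j$, Condition~\ref{compcond} identifies this locus with both $A(\omega,i,j)\times P_j\times P_i$ and $A(\omega,j,i)\times P_i\times P_j$, the two identifications differing by a factor swap. Under the first, $g(\omega,i)\oplus h_i$ restricts to $\big(g(\omega,i,j)\oplus h_j\big)\oplus h_i$, and under the second, $g(\omega,j)\oplus h_j$ restricts to $\big(g(\omega,j,i)\oplus h_i\big)\oplus h_j$. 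Since by induction $g(\omega,i,j)=g(\omega,j,i)$ as metrics on the common manifold $A(\omega\cup\{i,j\})$, and since product metrics are invariant under factor interchange, these two prescriptions coincide. An iterated version of the same argument handles higher codimension corner intersections.

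Given compatibility on corners, I would extend across $A(\omega)$ as follows. Using the preferred local collar structure of the manifold-with-corners $A(\omega)$, choose, for each $i\notin\omega$, a collar neighborhood $\partial_i A(\omega)\times[0,\varepsilon)$ and install there the product metric $\big(g(\omega,i)\oplus h_i\big)\oplus\eta$. The corner compatibility just verified ensures that where collars of $\partial_i A(\omega)$ and $\partial_j A(\omega)$ overlap inside $A(\omega)$, these product metrics coincide, so the prescription defines a single admissible metric on a neighborhood of $\mathrm{Sing}(A(\omega))$. A parallel construction on a collar of $\partial_0 A(\omega)$—started from any admissible $\mathscr{P}$-compatible metric on $\partial_0 A(\omega)$, which exists by applying the full induction to the $\mathscr{P}_n$-manifold structure of $\partial_0 A(\omega)$—extends the metric to a neighborhood of $\partial A(\omega)$. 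Finally, I would interpolate to an admissible metric on all of $A(\omega)$ by a partition of unity $\{\rho_\alpha\}$ subordinate to an open cover by local-model charts $U\times[0,1)^k$, with each $\rho_\alpha$ chosen to be a function of the $U$-coordinates only; such partitions of unity exist because the collar structures are globally defined, and they preserve the product-with-$\eta$ form on every chart, hence preserve admissibility.

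The main obstacle is precisely the simultaneous verification of admissibility and of the product form on every face at every codimension, because admissibility is a strong pointwise condition and the cumulative constraints at deep corners could in principle conflict. The argument above sidesteps this by doing the downward induction on $|\omega|$, so that at each stage the pieces to be reconciled are already internally compatible, and by using only normal-constant partitions of unity so that admissibility is never broken during the extension.
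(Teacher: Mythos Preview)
Your proof is correct and follows exactly the approach indicated in the paper, which consists of a single sentence: ``Use downward induction on the cardinality of $\omega \subset \{1, \ldots, n\}$, starting with $|\omega| = n$.'' You have simply supplied the details (corner compatibility via Condition~\ref{compcond}, collar extensions, normal-constant partitions of unity) that the paper leaves implicit.
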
 

\begin{proof} Use  downward induction on the cardinality of $\omega \subset \{1, \ldots, n\}$, starting with $| \omega | = n$. 
\end{proof}

\begin{constr}[Scaling $\mathscr{P}$-compatible metrics] \label{constrscal} 
Let $\mathscr{P}$ be a family of Riemannian singularity types and let $A$ be a $\mathscr{P}_n$-manifold together with a $\mathscr{P}$-compatible  metric $g$.
For  $\lambda > 0$ the scaled metric $\lambda \cdot g$ is not $\mathscr{P}$-compatible unless $\lambda = 1$. 
The following construction will resolve this issue. 

We fix, once and for all,  a smooth cut-off function $\phi: [0,1] \to [0,1]$ equal to $0$ on $[0,1/3]$ and equal to $1$ near $1$. 

Let $\lambda > 0$ and $\delta \geq 3$ real numbers. 
For $\omega \subset \{1, \ldots, n\}$, say $\omega =  ( i_1, \ldots, i_k)$ with $1 \leq i_1 < \cdots < i_k \leq n$, we obtain a $k$-parameter family  $(g_t)_{ t = (t_{i_1}, \ldots, t_{i_k} )  \in [0,\delta]^{k}}$ of Riemannian metrics on  $\partial_{i_1} \cdots \partial_{i_k}  A \cong A(\omega) \times P_{i_1} \times \cdots \times P_{i_k}$   where
 \begin{equation} \label{multpara} 
g_t = \lambda \cdot g(\omega ) \oplus \bigoplus_{i \in \omega}  \Big(  \phi(t_{i} /\delta)  \cdot \lambda   + \big( 1- \phi (t_{i}/\delta ) \big) \Big) h_{i}  \, .  
 \end{equation} 
 
We abbreviate $P_{\omega} := P_{i_1} \times \cdots \times P_{i_k}$. 
With the Euclidean metric $\eta$ on $[0,\delta]^{k}$ we obtain a  smooth Riemannian metric $g_{\omega, \lambda, \delta }$ on $A(\omega)  \times P_{\omega} \times [0, \delta]^{k}$ defined by 
\[
 g_{\omega, \lambda, \delta}  (a, p , t) :=   g_t (a,p) \oplus \eta  \, . 
\]

Choose some monotonically increasing diffeomorphism $\chi: [0,1] \to [0,\delta]$ which has derivative $\sqrt{\lambda}$ near $1$ and denote the induced diffeomorphisms $[0,1]^{k} \to [0,\delta]^{k}$ by $\chi$ as well. 

For $\omega \subset \{1, \ldots, n\}$, if $|\omega| = k$,  then we replace the metric $\lambda \cdot \left( g(\omega) \oplus \bigoplus_{i \in \omega} h_i \oplus \eta \right)$ on the local model $A(\omega) \times P_{\omega} \times [0,1)^{k} \subset A$ by the metric $g_{\omega, \lambda, \delta}$ pulled back along the diffeomorphism $\id \times \id \times \chi : A(\omega) \times P_{\omega} \times [0,1)^{k} \rightarrow A(\omega) \times P_{\omega} \times [0,\delta)^{k}$. 
Continuing with increasing $k=0, \ldots, n$ this construction results in a  smooth metric on $A$.
Furthermore, by  the choice of $\phi$ and since $\delta \geq 3$, there are induced local corner models on $A$ with respect to which this metric is $\mathscr{P}$-compatible. 

This new  metric on $A$ is denoted by $g_{(\lambda, \delta)} $ and is called the {\em $(\lambda, \delta)$-scaling} of $g$. 
 The diffeomorphism $\chi$ and hence the metric $g_{(\lambda, \delta)}$ can be assumed to depend smoothly on $\lambda$ and $\delta$.
  Note that $g_{(\lambda, \delta)}(\omega) = g(\omega)_{(\lambda, \delta)}$ for $\omega \subset \{1, \ldots, n\}$.

For $n = 2$ and $\delta = 3$ the situation is illustrated in Figure~\ref{figure1}, where the  shaded region indicates  the collar near  $\sing (A)$ for the scaled metric $g_{(\lambda, \delta)}$.

\begin{figure} 
\begin{tikzpicture} [scale=1.6]
    \filldraw [gray!15!white]           (-3,2) -- (3.5,2) -- (3.5,-1.5) -- (4, - 1.5) -- (4, 2.5) -- (-3,2.5) -- (-3, 2) ; 
    \draw                    (2.5, 2.5) node[above] {$\lambda g(1, 2) \oplus h_{1} \oplus \lambda h_{2}$}   -- (2.5,1.4) ;
 \draw  (2.5, 1)     node[above] {$\lambda g(1, 2) \oplus \lambda h_{1} \oplus \lambda h_{2}$} -- (4,1) ; 
    \draw[dotted, very thick] (-3,1)  --  (-2.5,1) ;
    \draw[white]                 (2.5, 1) -- node[near end, color=black] {$\lambda g(2) \oplus \lambda h_{2}$} (2.5, -1) ;
  \draw[very thick]     (-2.5,1) -- node[near start, above]  {$\lambda g( 1) \oplus \lambda h_{1}$} (2.5,1)  -- (2.5,-0.3) ;
  \draw[very thick]    (2.5, -0.7) -- (2.5, -1) ;
  \draw[dotted, very thick] (2.5,-1) -- (2.5,-1.5) ;
   \draw[dotted]          (-3,2.5) --  (-2.5,2.5) ; 
  \draw                   (-2.5, 2.5) -- node[near start, above] {$\lambda g( 1) \oplus h_{1}$} (2.5, 2.5) -- (4,2.5) node[above right] {$\lambda g(1, 2) \oplus h_{1} \oplus h_{2}$} -- (4,1) node[above right]  {$\lambda g(1, 2) \oplus \lambda h_{1} \oplus h_{2}$}  -- node[right, near end] {$\lambda g(2) \oplus h_{2}$} (4,-1); 
    
  \draw[dotted]          (4,-1) --   (4,-1.5) ;  
   \filldraw [black]                   (2.5,1) circle [radius=2pt] ;  
   \filldraw [gray!50!white]            (4,2.5)  circle [radius=2pt] (2.5,2.5)  circle [radius=2pt] (4,1)  circle [radius=2pt]   ;  
   \draw node at (0.5 , -0) {$\lambda g(\emptyset)$} ; 
    \end{tikzpicture} 
\caption{$\mathscr{P}_2$-manifold $A$ with scaled metric $g_{(\lambda, \delta)}$} 
\label{figure1}
\end{figure}
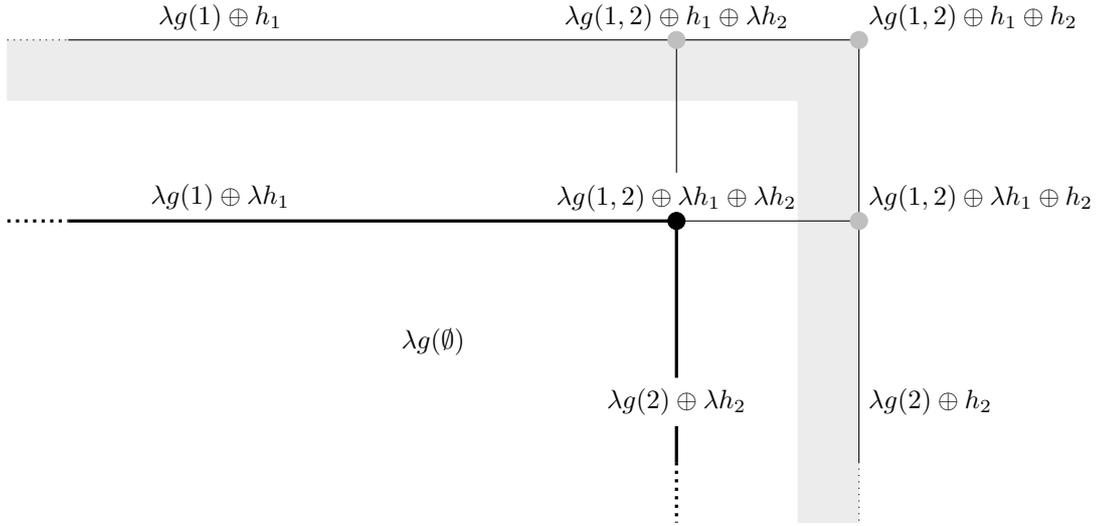 
\end{constr}

\begin{defn} \label{singupos} Let $\mathscr{P}$ be a family of Riemannian singularity types, let $A$ be a $\mathscr{P}_n$-manifold and let $g$ be a $\mathscr{P}$-compatible metric on $A$.
We say that $g$  is   {\em singularity-positive}, if for all $1 \leq i \leq n$ the product metric $g(i) \oplus h_i$ on $\partial_i A = A(i) \times P_i$ is of positive scalar curvature. 
\end{defn} 

\begin{prop} \label{standard_riem} Let $\mathscr{P}$ be positive, let $A$ be a compact $\mathscr{P}_n$-manifold, and let $g$ be a $\mathscr{P}$-compatible metric on $A$. 
Then there exists $\lambda \geq 1$ and $\delta_0 \geq 3$ such that  for all $\delta \geq \delta_0$ the metric $g_{(\lambda, \delta)}$ is singularity positive.
\end{prop}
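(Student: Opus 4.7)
The plan is to exploit the additivity of scalar curvature under Riemannian products. By $\mathscr{P}$-compatibility of the scaled metric together with the identity $g_{(\lambda,\delta)}(i) = g(i)_{(\lambda,\delta)}$ noted in the construction, the restriction of $g_{(\lambda,\delta)}$ to $\partial_i A = A(i) \times P_i$ is the product metric $g(i)_{(\lambda,\delta)} \oplus h_i$, whose scalar curvature is $\mathrm{scal}(g(i)_{(\lambda,\delta)}) + \mathrm{scal}(h_i)$. Since each $P_i$ is compact and $\mathscr{P}$ is positive, continuity and compactness give a uniform constant $c > 0$ with $\mathrm{scal}(h_i) \geq c$ for every $i = 1, \ldots, n$. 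It therefore suffices to produce $\lambda \geq 1$ and $\delta_0 \geq 3$ with $\mathrm{scal}(g(i)_{(\lambda,\delta)}) > -c$ pointwise on $A(i)$ for all $i$ and all $\delta \geq \delta_0$.

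To control this scalar curvature I would work in local charts on $A(i)$. In the open region where $g(i)_{(\lambda,\delta)} = \lambda g(i)$---that is, outside the collar neighborhoods of all singular strata of $A(i)$---its scalar curvature equals $\lambda^{-1} \mathrm{scal}(g(i))$ and is uniformly bounded in absolute value by $C/\lambda$, where $C$ depends only on the fixed data $(g, \mathscr{P})$. The key structural observation is that on each local corner chart of $A(i)$ indexed by a non-empty $\omega \subset \{1, \ldots, n\} \setminus \{i\}$, the construction \eqref{multpara}, after the reparametrisation by $\chi$ in each of the $|\omega|$ normal directions, produces an \emph{honest} Riemannian product
\[
   \lambda g(i, \omega) \ \oplus \ \bigoplus_{j \in \omega} \bigl( \sigma_j(t_j) h_j \oplus dt_j^2 \bigr),
\]
with $\sigma_j(t) = \phi(t/\delta) \lambda + (1 - \phi(t/\delta))$ and $t_j = \chi(s_j)$.

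Because the chart metric is a Riemannian product, its scalar curvature splits additively over factors: the $A(i,\omega)$-factor again contributes a term bounded in absolute value by $C/\lambda$, and each one-variable warped product $\sigma_j h_j \oplus dt_j^2$ on $P_j \times [0,\delta)$ satisfies, by the standard warped-product formula with $f = \sqrt{\sigma_j}$ and $p_j = \dim P_j$,
\[
   \mathrm{scal}\bigl(\sigma_j h_j \oplus dt_j^2\bigr) \ = \ \frac{\mathrm{scal}(h_j)}{\sigma_j} - p_j \frac{\sigma_j''}{\sigma_j} - \frac{p_j(p_j-3)}{4} \frac{(\sigma_j')^2}{\sigma_j^2}.
\]
The first summand is non-negative since $\sigma_j \geq 1$ and $\mathrm{scal}(h_j) > 0$; the remaining two are bounded in absolute value by constant multiples of $(\lambda-1)/\delta^2$ and $(\lambda-1)^2/\delta^2$ respectively, using $\sigma_j \geq 1$ together with uniform bounds on $\phi'$ and $\phi''$. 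Summing over the at most $n$ values of $j \in \omega$ and combining with the interior estimate yields a global pointwise bound
\[
   \mathrm{scal}(g(i)_{(\lambda,\delta)}) \ \geq \ -\frac{\widetilde{C}}{\lambda} \ - \ \frac{\widetilde{C}'(\lambda-1)^2}{\delta^2}
\]
with constants $\widetilde{C}, \widetilde{C}'$ independent of $\lambda \geq 1$ and $\delta \geq 3$.

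To finish, I would first fix $\lambda$ large enough that $\widetilde{C}/\lambda < c/2$, and then choose $\delta_0 \geq 3$ large enough that $\widetilde{C}'(\lambda-1)^2/\delta_0^2 < c/2$; for every $\delta \geq \delta_0$ this forces $\mathrm{scal}(g(i)_{(\lambda,\delta)}) > -c$, and hence $\mathrm{scal}(g_{(\lambda,\delta)}(i) \oplus h_i) > 0$ pointwise on $\partial_i A$ for every $i$, that is, singularity positivity of $g_{(\lambda,\delta)}$. The main technical obstacle is the scalar curvature computation in the multi-parameter collar charts, and this is precisely what the product-structure observation above resolves: it reduces the calculation to a single one-dimensional warped-product estimate together with a trivial compactness bound on the $A(i,\omega)$-factor.
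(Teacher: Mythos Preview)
Your argument is correct and arrives at the same conclusion, but by a genuinely different route than the paper. The paper first fixes $\lambda \gg 1$ so that each $\lambda g(i)\oplus h_i$ has positive scalar curvature, then observes that the fiber metrics $g_t$ in \eqref{multpara} are positive whenever some $t_{i_j}\leq \delta/3$ (this is the region where the corresponding $P_{i_j}$-factor is unscaled), and finally invokes the O'Neill formula for the Riemannian submersion $(A(\omega)\times P_\omega\times[0,\delta]^k,\;g_{\omega,\lambda,\delta})\to([0,\delta]^k,\eta)$ to transfer fiberwise positivity to positivity of the total space on that region once $\delta$ is large. Singularity positivity then follows because $\partial_i A$ sits at $t_i=0$.

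You instead reduce directly to a pointwise lower bound on $\mathrm{scal}\big(g(i)_{(\lambda,\delta)}\big)$ on $A(i)$, and your key observation---that each $\sigma_j$ depends on the single variable $t_j$, so that the chart metric is an \emph{honest} Riemannian product $\lambda g(i,\omega)\oplus\bigoplus_j(\sigma_j h_j\oplus dt_j^2)$ rather than merely the total space of a submersion---lets you replace the O'Neill formula by the elementary one-dimensional warped-product formula. This buys you explicit quantitative control (the $\widetilde C/\lambda$ and $\widetilde C'(\lambda-1)^2/\delta^2$ terms) without appealing to a general submersion identity, at the cost of a slightly longer bookkeeping of chart regions. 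The paper's approach is more conceptual and also reappears later (e.g.\ in Proposition~\ref{scalescal}); yours is self-contained and sharper in that it tracks the dependence on $\lambda$ and $\delta$ explicitly. Either approach is adequate here.
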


\begin{proof} 
Since the metrics $h_1, \ldots, h_n$ are of positive scalar curvature and $A$ is compact we find  some  $\lambda \gg 1$ such that  the metric $\lambda \cdot g(i) \oplus h_i $ on $\partial_i A = A(i) \times P_i$ is of positive scalar curvature for all $1 \leq i \leq n$. 

By the additivity of scalar curvature in Riemannian products and since $\lambda \geq 1$ the metric $g_{t}$  in \eqref{multpara} is of positive scalar curvature whenever $t_{i_j} \leq \delta/3$ for some $1 \leq j \leq k$. 

For $\omega \subset \{1, \ldots, n\}$ with $|\omega| = k$,  we obtain Riemannian submersions
\[
  \Big(       A( \omega)  \times P_{\omega} \times  [0,\delta ]^{k}  \; ,  \;    g_{\omega, \lambda, \delta} \Big)  \longrightarrow \Big( [0,\delta]^{k},  \eta  \Big) \, , 
\]
whose fibers are equipped with the metrics $g_{t}$. 

By the O'Neill formula for the scalar curvature in Riemannian submersions   \cite{Besse}*{(9.37)},  we find $\delta_0 \geq 3$ such that for all $\delta \geq \delta_0$  the metric $ g_{\omega, \lambda, \delta}$ is of positive scalar curvature on the subset 
\[
   \{   (a , p, t) \in A(\omega) \times P_{\omega} \times[0,\delta]^{k}  \mid 0 \leq  t_{i_j}   \leq \delta /3  \textrm{  for some }  1 \leq j \leq k \} \, .
\]
This implies the assertion of Proposition \ref{standard_riem}. 
 \end{proof} 

We need the following variation of Definition \ref{singupos}.

\begin{defn} \label{comppos}  Let $\mathscr{P}$ be a family of Riemannian singularity types, let $A$ be a $\mathscr{P}_n$-manifold and let $g$ be a $\mathscr{P}$-compatible metric on $A$.
We say that $g$ is  {\em positive}, if for all $\omega \subset \{  1 , \ldots, n\}$ \rot{(including $\omega = \emptyset$)}  the metric $g(\omega)$ on $A(\omega)$ is of positive scalar curvature. 
\end{defn} 

This condition, which is stronger than just requiring the metric $g$ on $A$  to be of positive scalar curvature, will become important  in the proof of the next proposition.
Note that positive metrics are singularity positive in the sense of Definition \ref{singupos}.

\begin{prop} \label{scalescal}  Let $\mathscr{P}$ be a family of singularity types and $A$ be a compact $\mathscr{P}_n$-manifold  together with a $\mathscr{P}$-compatible  positive metric $g$.
Let $\Lambda \subset (0, \infty)$ be a compact subset and let $s > 0$. 
\begin{enumerate}[label={(\roman*)}] 
  \item \label{uno} Let $\mathscr{P}$ be positive. 
      Then there exists $\delta_0 \geq 3$ such that for all  $\lambda \in \Lambda$ and  $\delta \geq \delta_0$ the scaled metric $g_{(\lambda, \delta)}$ is positive. 
   \item \label{duo} There exists $0 < \lambda_0 \leq 1$ such that for all $0 < \lambda \leq  \lambda_0$ there exists $\delta_0 \geq 3$ such that for all $\delta \geq \delta_0$ and $\omega \subset \{ 1, \ldots, n\}$ we have $\scal_{g_{(\lambda, \delta)}(\omega)}  > s$. 
 \end{enumerate}  
\end{prop}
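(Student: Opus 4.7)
The plan is to compute scalar curvature locally on each stratum $A(\omega)$, relying on the identity $g_{(\lambda,\delta)}(\omega) = g(\omega)_{(\lambda,\delta)}$ recorded in Construction~\ref{constrscal}. Near a codimension-$k$ corner stratum of $A(\omega)$ of the form $A(\omega,\omega')\times P_{\omega'}$ with $\omega' \subset \{1,\dots,n\}\setminus\omega$, the scaled metric takes, after undoing the $\chi$-pullback, the Riemannian submersion form $g_t \oplus \eta$ over $([0,\delta]^k,\eta)$ with fiber
\[
g_t \;=\; \lambda\,g(\omega,\omega') \;\oplus\; \bigoplus_{j \in \omega'} \sigma_j(t_j)\,h_j, \qquad \sigma_j(t_j) := \phi(t_j/\delta)\lambda + 1 - \phi(t_j/\delta) \in [\lambda,1].
\]
O'Neill's formula (\cite{Besse}*{(9.37)}) expresses the total scalar curvature as $\scal_{g_t}$ plus correction terms that are quadratic in $\partial_t g_t$ and linear in $\partial_t^2 g_t$; since $g_t$ depends on $t$ only through $\phi(t_j/\delta)$, these carry explicit factors $1/\delta$ and $1/\delta^2$ and therefore become arbitrarily small for $\delta$ large.

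The central step will be a uniform lower bound for the fiber scalar curvature. Additivity in products gives
\[
\scal_{g_t} \;=\; \frac{1}{\lambda}\,\scal_{g(\omega,\omega')} \;+\; \sum_{j \in \omega'} \frac{\scal_{h_j}}{\sigma_j(t_j)}.
\]
Setting $J^- := \{j \in \omega' : \scal_{h_j} < 0\}$, I would use $\sigma_j \geq \lambda$ to bound $\scal_{h_j}/\sigma_j \geq \scal_{h_j}/\lambda$ for $j \in J^-$ and drop the remaining non-negative summands. By $\mathscr{P}$-compatibility of $g$, the bracket
\[
\scal_{g(\omega,\omega')} \;+\; \sum_{j \in J^-}\scal_{h_j}
\]
is exactly the scalar curvature of $g(\omega,\omega'\setminus J^-)$ at a point of its corner stratum $A(\omega,\omega') \times P_{J^-}$. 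Positivity of $g$ together with compactness of the finitely many strata of $A$ then yields a uniform constant $c > 0$ such that $\scal_{g_t} \geq c/\lambda$ for all $\omega$, $\omega'$, $t$, and $\lambda \in (0,1]$. In the situation of part~\ref{uno}, positivity of $\mathscr{P}$ forces $J^- = \emptyset$ and this bound is immediate.

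With the fiber bound in hand the two parts follow by routine parameter selection. For part~\ref{uno}, compactness of $\Lambda$ gives $\scal_{g_t} \geq c/\max\Lambda$ uniformly in $\lambda \in \Lambda$, and I would enlarge $\delta_0$ until the O'Neill corrections remain below $c/(2\max\Lambda)$ for all $\delta \geq \delta_0$ and all $\lambda \in \Lambda$, giving positivity of $g_{(\lambda,\delta)}(\omega)$. For part~\ref{duo}, I would first choose $\lambda_0 \leq 1$ with $c/\lambda_0 > s+1$, so that $\scal_{g_t} > s+1$ on every stratum for every $\lambda \leq \lambda_0$, and then for each such $\lambda$ pick $\delta_0 = \delta_0(\lambda)$ large enough that the corrections have absolute value below $1$. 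The principal technical obstacle will be tracking the $\lambda$-dependence of the O'Neill corrections: the inverse fiber metric carries factors at most $1/\lambda$ matched against the $O(1/\delta^2)$ derivatives of $g_t$, so a direct count bounds each correction by $O(1/(\delta^2\lambda^2))$, which is negligible against $c/\lambda$ once $\delta$ is chosen large enough, possibly as a function of $\lambda$ as permitted in part~\ref{duo}.
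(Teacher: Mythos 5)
Your argument is correct and follows the paper's own proof: a uniform lower bound for the scalar curvature of the fiber metrics $g_t$ from \eqref{multpara}, the O'Neill formula for the Riemannian submersion over the flat cube to control the $O(1/\delta)$-corrections for large $\delta$, and the identity $g_{(\lambda,\delta)}(\omega)=g(\omega)_{(\lambda,\delta)}$ to run the same estimate on every stratum. The only difference is expository: your absorption of the negative $\scal_{h_j}$-terms into $\frac{1}{\lambda}\bigl(\scal_{g(\omega,\omega')}+\sum_{j\in J^-}\scal_{h_j}\bigr)$, identified via $\mathscr{P}$-compatibility with the scalar curvature of the positive metric $g(\omega,\omega'\setminus J^-)$ at a corner point, makes explicit the step the paper compresses into ``hence \eqref{multpara} implies (b)'' --- precisely where the full strength of Definition \ref{comppos} is used.
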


\begin{proof} 
For $\omega \subset \{1, \ldots, n\}$ the metric $g(\omega)$ is of positive scalar curvature by assumption and hence  \eqref{multpara} implies, assuming positivity of $\mathscr{P}$ in case (a): 
\begin{enumerate}[label={(\alph*)}] 
   \item For all $\lambda \in \Lambda$ and $t \in [0,\delta]^{k}$ we have $\scal_{g_{t}} > 0$. 
  \item  There exists $0 < \lambda_0 \leq 1$ such that for all $0 < \lambda \leq \lambda_0$ and $t \in [0,\delta]^{k}$ we have $\scal_{ g_{t} }> s$. 
\end{enumerate} 
Using the O'Neill formula and the compactness of $\Lambda$ this implies, on $A = A( \emptyset)$: 
 \begin{enumerate}[label={(\alph*)}]
  \item \label{aa} There exists $\delta_0 \geq 3$ such that for $\lambda \in \Lambda$ and $\delta \geq \delta_0$ we have $\scal_{g_{(\lambda, \delta)}(\emptyset)} > 0$.
  \item \label{bb} There exists $0 < \lambda_0 \leq 1$ such that for all $0 < \lambda \leq \lambda_0$ there exists  $\delta_0 \geq 3$  such that for all $\delta \geq \delta_0$ we have $\scal_{g_{(\lambda, \delta)}(\emptyset)} > s$. 
  \end{enumerate}

Now, for any $\theta \subset \{1, \ldots, n\}$, a similar argument applies to $A(\theta)$   instead of $A = A(\emptyset)$  so that we can pass to the maximum  of the resulting constants $\delta_0$ in \ref{aa}, and  to the minimum of the resulting constants $\lambda_0$ and the maximum of the resulting constants $\delta_0$  in \ref{bb}, in order to prove the required lower estimates of scalar curvatures on all $A(\theta)$. 
\end{proof}

\begin{cor} \label{add:restrict} 
Let $\mathscr{P}$ be positive, let $A$ be a compact $\mathscr{P}_n$-manifold, and let $g$ be a $\mathscr{P}$-compatible metric on $A$.
Furthermore, let $C \subset \partial_0 A$ be a union of components of $\partial_0 A$  and assume that  the restriction of $g$ to $C$ is positive (in the sense of Definition \ref{comppos}). 

Then there exists $\lambda \geq 1$ and $\delta_0 \geq 3$ such that  for all $\delta \geq \delta_0$ the scaled metric $g_{(\lambda, \delta)}$ is singularity positive and still restricts to a positive metric on $C$.
\end{cor}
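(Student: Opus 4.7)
The plan is to combine Proposition \ref{standard_riem} (applied to $(A,g)$ to achieve singularity positivity for sufficiently large $\lambda$ and $\delta$) with Proposition \ref{scalescal}\ref{uno} (applied to the restriction $g|_C$ with the one-point compact parameter set $\{\lambda\}$ to preserve positivity on $C$). The key structural observation that makes this work is that the $(\lambda,\delta)$-scaling construction in \ref{constrscal} only modifies the metric along the singular collar directions labelled by $\{1,\ldots,n\}$; the boundary direction labelled by $0$ is untouched. Hence the scaling commutes with restriction to any union of components of $\partial_0 A$.

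First I would check that $C$ inherits the structure of a compact $\mathscr{P}_n$-manifold with empty $0$-boundary, via $C(\omega) := C \cap A(\omega)$, and that the identifications $\partial_i C(\omega) \cong C(\omega,i)\times P_i$ for $i \in \{1,\ldots,n\}\setminus\omega$ are induced from those of $A$; this is straightforward since $C$ is a union of components of $\partial_0 A$. Similarly, the restrictions $g(\omega)|_{C(\omega)}$ form a $\mathscr{P}$-compatible metric on $C$ because $\mathscr{P}$-compatibility of $g$ is stated fibrewise in each $\omega$-stratum, and $C$ meets each stratum along its own $\omega$-stratum. By hypothesis this restriction is positive in the sense of Definition \ref{comppos}.

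Next I would apply Proposition \ref{standard_riem} to $(A,g)$ to obtain $\lambda \geq 1$ and some $\delta_0^A \geq 3$ such that $g_{(\lambda,\delta)}$ is singularity positive for all $\delta \geq \delta_0^A$. With $\lambda$ now fixed, I apply Proposition \ref{scalescal}\ref{uno} to the compact $\mathscr{P}_n$-manifold $C$ equipped with the $\mathscr{P}$-compatible positive metric $g|_C$, using the compact parameter set $\Lambda := \{\lambda\} \subset (0,\infty)$. This yields some $\delta_0^C \geq 3$ such that $(g|_C)_{(\lambda,\delta)}$ is positive for every $\delta \geq \delta_0^C$. Setting $\delta_0 := \max(\delta_0^A, \delta_0^C)$ then gives both required properties simultaneously, provided we know that $g_{(\lambda,\delta)}\big|_C = (g|_C)_{(\lambda,\delta)}$.

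The main (and essentially only) point that needs some care is this last identification. I would verify it by inspecting \eqref{multpara} and the surrounding local replacement procedure: the scaling is prescribed on local models $A(\omega)\times P_\omega \times [0,1)^k$ for $\omega \subset \{1,\ldots,n\}$, with the parameter $t$ ranging over the singular collar coordinates only. Near a point of $C$ lying in the closure of the codimension-$k$ singular stratum $A(\omega)$, the local model of $A$ has the form $C(\omega)\times P_\omega \times[0,1)^k$ times a collar $[0,1)$ in the $\partial_0$-direction; the scaling formulae only touch the first three factors and act on them exactly as the scaling of $g|_C$ would. Consequently $g_{(\lambda,\delta)}$ restricts to $(g|_C)_{(\lambda,\delta)}$ on $C$, and the corollary follows. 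I do not expect any substantial obstacle beyond this bookkeeping of local models.
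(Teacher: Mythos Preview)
Your proposal is correct and follows exactly the same approach as the paper: apply Proposition~\ref{standard_riem} to fix $\lambda$ and a first $\delta_0$, then apply Proposition~\ref{scalescal}\ref{uno} to $C$ with $\Lambda=\{\lambda\}$ and enlarge $\delta_0$ if necessary. The paper's proof is two sentences and leaves implicit the compatibility $g_{(\lambda,\delta)}|_C = (g|_C)_{(\lambda,\delta)}$ that you carefully spell out; your additional bookkeeping is accurate and does no harm.
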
 

\begin{proof} Let $\lambda$ and $\delta_0$ be chosen as in Proposition \ref{standard_riem}. 
The claim follows from Proposition \ref{scalescal} \ref{uno} applied to $A := C$ and $\Lambda := \{\lambda\}$, possibly after passing to some larger $\delta_0$. 
\end{proof} 

We can now show the following bordism principle. 

 \begin{prop}  \label{bordprinc} Let $\mathscr{P}$ be a positive family of singularity types and let $V$ be a compact $\mathscr{P}_n$-manifold with $\dim V \geq 6$.
 Assume that the boundary $\partial_0 V $ decomposes as a disjoint union $\partial_0 V =  A \sqcup M$, where $A$ is a closed $\mathscr{P}_n$-manifold equipped with a $\mathscr{P}$-compatible positive metric, and $M$ is a closed smooth manifold. 
Furthermore assume that the inclusion $M \hookrightarrow V$ is a $2$-equivalence. 

Then $M$ carries a Riemannian metric of positive scalar curvature. 
 \end{prop}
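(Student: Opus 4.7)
The plan is to extend the given positive metric on $A$ to a $\mathscr{P}$-compatible metric on $V$ which is singularity positive on $V$ and still positive on $A$, so that a full collar of $C := A \cup \sing(V)$ in $V$ acquires positive scalar curvature. The positive scalar curvature is then propagated across $V$ to its remaining boundary component $M$ via a sequence of codimension-$\geq 3$ surgeries performed in the smooth locus of $V$. Since $M$ is smooth by hypothesis, $M \cap \sing(V) = \emptyset$; after smoothing corners along the interface where $A$ meets the singular strata, we may therefore view $V$ as an ordinary smooth bordism from the closed hypersurface $C$ to the closed smooth manifold $M$.

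First, using an inductive construction patterned after \lemref{compat} together with a partition of unity, I would extend the given $\mathscr{P}$-compatible positive metric on $A$ to an admissible $\mathscr{P}$-compatible metric $g$ on $V$ which is a product metric in a collar of $A$ in $V$. By Corollary~\ref{add:restrict} applied with the subset $C$ there taken to be $A$, a suitable scaling $g_{(\lambda,\delta)}$ is singularity positive on $V$ while its restriction to $A$ remains positive. Hence a full neighborhood of $C = A \cup \sing(V)$ inside $V$ carries a metric of positive scalar curvature.

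Next, since $\dim V \geq 6$ and $M \hookrightarrow V$ is a $2$-equivalence, standard handle-trading arguments (going back to Smale and Wall) yield a handle decomposition of $V$ relative to a collar of $C$ consisting only of handles of index $\leq \dim V - 3$; equivalently, the dual decomposition from the $M$-side has no handles of index $\leq 2$. By transversality all attaching spheres may be chosen inside the smooth locus of $V$, i.e.\ disjoint from the singular strata. Applying the Gromov--Lawson--Schoen--Yau surgery theorem inductively, the positive scalar curvature metric near $C$ is extended across each successive handle, and the resulting positive scalar curvature metric on $V$ restricts to the desired metric on $M$. The main obstacle is to verify that the classical surgery procedure is unaffected by the Baas--Sullivan structure; this is harmless because every surgery is supported in a smooth coordinate region disjoint from $\sing(V)$, so both the $\mathscr{P}$-compatible product structure along the singular strata and the positive scalar curvature condition in the singular collar are preserved throughout the construction.
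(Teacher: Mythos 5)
Your first step (extend the given positive metric on $A$ over $V$ and rescale via Corollary~\ref{add:restrict} to get a singularity-positive metric restricting to a positive one on $A$) and your final step (handle-trading plus the codimension~$\geq 3$ surgery principle, i.e.\ \cite{Stolz}*{Extension Theorem 3.3}) agree with the paper. The gap lies in the passage between them. The surgery/extension theorem needs as input a \emph{closed smooth} manifold serving as the incoming end of a smooth bordism to $M$, \emph{together with a positive scalar curvature metric on that closed manifold}. Your incoming end $C=A\cup\sing(V)$ is not smooth: it has corners wherever two or more faces $\partial_iV$ meet (not only ``where $A$ meets the singular strata''), and after smoothing you obtain a closed hypersurface $C_0\subset V$ which is only \emph{near} $C$. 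Knowing that an ambient neighborhood of $C$ in $V$ has positive scalar curvature does not give a positive scalar curvature metric on $C_0$: the induced metric on a hypersurface of a positively curved manifold need not have positive scalar curvature, so your appeal to ``the positive scalar curvature metric near $C$ is extended across each successive handle'' has no valid starting datum. (Relatedly, the final phrase ``the resulting positive scalar curvature metric on $V$ restricts to the desired metric on $M$'' is not how the surgery method works; it produces a metric on the outgoing boundary, it does not restrict an ambient metric.)

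This missing step is exactly the technical heart of the paper's proof: one smooths the corners by replacing the collar factors $[0,1)^k$ in the local models by hypersurfaces $\mathscr{H}^{k-1}\subset[0,1]^k$ chosen to be permutation-invariant, of product form near the faces (meeting them in $\mathscr{H}^{k-2}$), and with induced metric of \emph{nonnegative} scalar curvature. Because the singularity-positive metric is a product $g(\omega)\oplus\bigoplus_{i\in\omega}h_i\oplus\eta$ on each local model, the induced metric on the resulting smooth hypersurface $C_0$ is $g(\omega)\oplus\bigoplus_{i\in\omega}h_i\oplus\gamma_{k-1}$ (and $g|_A\oplus\gamma_0$ near $A$), hence of positive scalar curvature; only then does the smoothed manifold $W$ become a smooth bordism from a PSC manifold $C_0$ to $M$ with $M\hookrightarrow W$ a $2$-equivalence, and the classical bordism principle applies. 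Your proposal would be complete if you supplied this construction (or some other argument producing a PSC metric on the smoothed incoming boundary); your remarks about attaching spheres avoiding $\sing(V)$ are then superfluous, since after smoothing all handles live in the smooth manifold $W$ anyway.
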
 

\begin{proof} By Corollary \ref{add:restrict}  we find a $\mathscr{P}$-compatible singularity-positive metric $g$ on $V$ which restricts to a positive metric  on   $A \subset \partial_0 V$.

For $ 1\leq \ell \leq k \leq n+1$ we consider the face 
\[
   \partial_{\ell} [0,1]^k : = \{ (t_1, \ldots, t_{k}) \in [0,1]^k \mid t_{\ell} = 1 \} \subset [0,1]^k \, . 
\]
Each $\partial_{\ell} [0,1]^k$ can be identified with $[0,1]^{k-1}$ in a canonical way and $\partial_{\ell} [0,1]^k$ is equipped with a collar of width $0.1$ equal to 
\[
   \partial_{\ell} [0,1]^k  \times (0.9, 1] = \{ (t_1, \ldots, t_{k}) \in [0,1]^k \mid  0.9 < t_{\ell} \leq 1 \} \subset [0,1]^k . 
\]
For  $1 \leq k  \leq n+1$ we fix smooth hypersurfaces $\mathscr{H}^{k-1} \subset [0,1]^k$ homeomorphic to compact $(k-1)$-balls subject to the following conditions: 
\begin{itemize} 
    \item   $\mathscr{H}^0 = \{1 / 2 \} \subset [0,1]$.  
    \item $\mathscr{H}^{k-1}$ is invariant under permutations 
    \[
       [0,1]^k \to [0,1]^k ,  \quad (t_1, \ldots, t_k) \mapsto (t_{\sigma(1)}, \ldots, t_{\sigma(k)}). 
    \]  
     \item  For $2 \leq k \leq n+1$ the hypersurface $\mathscr{H}^{k-1}$ is of product form in the collar neighborhood of width $0.1$ of each codimension $1$ face $\partial_{\ell} [0,1]^{k}  \subset [0,1]^{k}$ for $1 \leq \ell \leq k$,  and meets this face in $\mathscr{H}^{k-2}$; 
   \item  the metric $\gamma_{k-1}$ on $\mathscr{H}^{k-1}$ induced from the Euclidean metric $\eta$ on $[0,1]^k$ is of nonnegative scalar curvature.
\end{itemize} 

One explicit construction of $\mathscr{H}^{k-1}$ is by attaching a $C^1$-collar  of width $1/5$ to the shifted spherical segment $ (4/5, \ldots , 4/5) - \{ t  \in [0,1]^k \mid \| t \| = 3/10 \} \subset [0,1]^k $ and smoothing. 
 
Replacing  $U \times [0,1)^k$ by $U \times \mathscr{H}^{k-1} $ in local models of $V$ for increasing $1 \leq k \leq n+1$ we obtain a smooth hypersurface $\partial W \subset V$ contained in the collar neighborhood of $\partial V$, where we recall that $\partial V$ is the set of points of codimension at least $1$ in $V$. 
 (We write $\mathscr{H}^{k-1}$ for $\mathscr{H}^{k-1} \cap [0,1)^k$.)
 The  hypersurface $\partial W$ is the boundary of a smooth embedded codimension zero submanifold $W$ of $V$, which we may think of $V$ with ``smoothened corners''. 

We obtain a decomposition $\partial W = C_0 \sqcup C_1$  where $C_0$ and $C_1$ are disjoint smooth submanifolds of $\partial W$ with $C_1 = M$. 
Furthermore $C_1 \hookrightarrow W$ is a $2$-equivalence. 

We claim that the smooth manifold $C_0$ carries a Riemannian metric of positive scalar curvature, such that Theorem \ref{bordprinc} follows from the usual bordism principle for positive scalar curvature metrics, see \cite{Stolz}*{Extension Theorem 3.3}. 

By assumption the induced metrics on the local models  $ V(\omega) \times \prod_{i \in \omega} P_i \times [0,1)^{k}$ of $V$ for $\omega \subset \{0, \ldots, n\}$, $\omega \cap \{1, \ldots, n\} \neq \emptyset$ with $|\omega| = k$ are of product form $ g(\omega) \oplus \bigoplus_{i \in  \omega} h_i \oplus \eta$  (here we set $h_0 = 0$) and of positive scalar curvature, as $g$  is singularity-positive. 
Furthermore the metric $g$ is of positive scalar curvature in the collar neighborhood $A \times P_0 \times [0,1) =  A \times [0,1)$, as  $g$ restricts to a positive metric on $A$. 

Since the metrics $\gamma_{k-1}$ on $\mathscr{H}^{k-1}$ have nonnegative scalar curvature this implies that the restricted metrics $ g(\omega) \oplus \bigoplus_{i \in  \omega} h_i \oplus \gamma_{k-1} $ are of positive scalar curvature on $ V(\omega)  \times \prod_{i \in \omega} P_i \times \mathscr{H}^{k-1}$  for these $\omega$  as well as   on $A \times  \mathscr{H}^0  =  A \times  \{1/2 \}$. 

Altogether we obtain a positive scalar curvature metric on $C_0$ as required. 
\end{proof} 

Let  $\mathscr{Q} := ( Q_0 = *, Q_1, Q_2, \ldots )$  be a family of singularity types as in Proposition \ref{isobordhom}. 
For $i \geq 1$ we can assume that $Q_i$ is equipped with a positive scalar curvature metric $h_i$ -- compare \cite{GL} -- such that $\mathscr{Q}$ is a  positive family of singularity types in the sense of Definition \ref{pos_sing}.

\begin{defn} \label{poshom} Let $X$ be a topological space.
A homology class $h \in \HH_d(X; \Z)$ is called \emph{positive} with respect to $\mathscr{Q}$, if there is a bordism class $ [f : A^d \to X] \in \Omega _d^{\SO, \mathscr{Q}}(X)$ with the following properties: 
\begin{itemize} 
   \item $A$ admits a $\mathscr{Q}$-compatible positive metric (see Definition \ref{comppos}).   
   \item $u ( [f : A^d \to X]) = h$, where $u : \Omega _d^{\SO, \mathscr{Q}}(X) \to \HH_d(X; \Z)$ is as defined in Construction \ref{constr:trafo}. 
\end{itemize} 
The subgroup of all positive homology classes with respect to $\mathscr{Q}$ is denoted by $\HH_d^{\mathscr{Q}, +}(X; \Z)$.

Note that a priori the subgroup of positive homology classes depends on the choice of the metrics $h_i$, and that 
positive homology is functorial in that a map $X \to Y$ of topological spaces induces a map $\HH_*^{\mathscr{Q}, +}(X; \Z) \to \HH_*^{\mathscr{Q}, +}(Y; \Z)$.  
\end{defn} 

\noindent{\em Proof of Theorem \ref{maintwo}.} 
\rot{First assume that $M$ is equipped with a positive scalar curvature metric $g$. 
Regarding $M$ as a Baas-Sullivan manifold with no singular strata, $g$ is a positive metric on $M$ in the sense of Definition \ref{comppos}. 
Hence $\phi_*([M]) \in \HH^{\mathscr{Q}, +}_d(B \pi_1(M) ; \Z)$ as required.} 

For the other implication assume   $\phi_*([M]) \in \HH^{\mathscr{Q}, +}_d(B \pi_1(M); \Z)$. 
We write $\phi_*([M]) = u( [ f : A^d \to B \pi_1(M) ])$ where $A$ is equipped with a $\mathscr{Q}$-compatible positive metric. 

Using an inclusion $* \rightarrow B \pi_1(M)$ the manifold $M$ represents a class 
\[
   [M] \in \Omega^{\SO}_d( B \pi_1(M)). 
\] 
Then $\beta :=  [ \phi : M \to B \pi_1(M) ] - [M] \in \tilde{\Omega}_d^{\SO}(B \pi_1(M) )$, the reduced oriented bordism group of $B \pi_1(M)$. 
Since $\tilde{\Omega}_d^{\SO}(B \pi_1(M) )$ is a finite abelian group of odd order by assumption on $\pi_1(M)$ and by the Atiyah-Hirzebruch spectral sequence, we find,  for each $m_0 \geq 0$, an $m \geq m_0$ with $2^m \cdot \beta = \beta$. 

Each element in the kernel of the map 
\[
     u :   \Omega^{\SO, \mathscr{Q}}_d(B \pi_1(M) ) \to \HH_d( B \pi_1(M) ; \Z)
\]
in  Corollary \ref{iso_odd_order} is $2$-power torsion by Proposition \ref{isobordhom}, and hence, using $d > 0$, there is some $m_0 \geq 0$ with 
\[
   2^{m_0} \cdot \big(  [ f : A^d \to B \pi_1(M) ] - \beta \big) = 0 \in  \Omega^{\SO, \mathscr{Q}}_d(B \pi_1(M) ) \, . 
\]
Hence there is an $m \geq m_0$ with
\begin{eqnarray} \label{multiple} 
    2^m \cdot   [ f : A^d \to B \pi_1(M) ] = \beta =  [ \phi : M \to B \pi_1(M) ] - [M] \in \Omega^{\SO, \mathscr{Q}}_d(B \pi_1(M) ) \, . 
\end{eqnarray} 
Since $d \geq 5$ we can represent $[M] \in \Omega_d^{\SO}$ by a closed oriented smooth $d$-manifold $N$ with a positive scalar curvature metric by \cite{GL}*{Corollary C}.  
By \eqref{multiple} there exists a compact connected oriented $\mathscr{Q}$-bordism $V \to B \pi_1(M) $  between $M \sqcup \overline{N} \to B \pi_1(M) $ and $\coprod_{2^m}  ( f : A \to B \pi_1(M))$. 
Here $\overline{N}$ denotes $N$ with the reversed orientation.

We can assume that the inclusion $M \hookrightarrow V$  is a $2$-equivalence by applying surgeries to the interior of $V$. 
For this we observe that the induced homomorphism $\pi_1(V) \to \pi_1(B \pi_1(M) )$ is surjective and has finitely generated kernel since $\pi_1(V)$ is finitely generated and $\pi_1(M)$ is finite by assumption. 
This kernel can hence be killed by surgeries along finitely many embedded circles in the interior of $V$ with trivial normal bundles, thus achieving $\pi_1(M) \cong \pi_1(V)$. 
Since now $\pi_1(V)$ is finite and $V$ is compact, $\pi_2(V)$ is finitely generated and so is the cokernel of $\pi_2(M) \to \pi_2(V)$. 
Moreover each element in this cokernel can be represented by an embedded $2$-sphere in the interior of $V$ with trivial normal bundle, the universal cover of $M$ being nonspin since $M$ is nonspin and $\pi_1(M)$ is of odd order. 
We can hence apply finitely many surgeries to the interior of $V$  to make $\pi_2(M) \to \pi_2(V)$ surjective, thus achieving our goal.

Now the assertion of  Theorem \ref{maintwo} follows from Proposition \ref{bordprinc}.

\begin{rem} The language developed in this section allows an alternative approach to the results in \cite{F}. 
\end{rem} 

\section{Admissible products} \label{admissible_prod} 

The cartesian product of two manifolds $A$ and $B$ with corners carries an induced structure of a manifold with corners.
However, the construction of the product of $\mathscr{P}_n$-manifolds as a $\mathscr{P}_n$-manifold is more involved.

In order to illustrate the issue  let $A$ and $B$ be smooth manifolds with boundaries diffeomorphic to the closed manifold $P_1$. 
This induces the structure of  $\mathscr{P}_1$-manifolds on $A$ and $B$ where $A(1) = B(1) = \{ *\}$. 
We obtain $\partial (A \times  B) = ( P_1   \times B)  \, \cup  \, ( A \times  P_1  )$,  but this does not induce the structure of a $\mathscr{P}_1$-manifold on $A \times B$ (even after straightening the $\pi/2$-angle at $\partial A \times \partial B$), since the $P_1$-factors on the two pieces of $\partial (A \times  B)$ correspond to different $P_1$-factors in the intersection  $( P_1 \times B)  \, \cap  \, ( A \times P_1)  = P_1 \times P_1$. 
Therefore an additional construction is required, which, roughly speaking, interchanges these two factors at the glueing region. 

This problem was discussed in  \cites{Bot92, Mironov, Morava, Shimada}, resulting in an obstruction of order at most $2$ if  $P_1$ is of even dimension. 
In the  following we present an explicit geometric construction, which somewhat differs from the mentioned sources and is well adapted to our purpose.
We will work in an oriented setting and in particular assume that all singularity types $P_i$ for $i \geq 1$ are even dimensional. 

In the following we fix $n \geq 0$. 
Let $A$ and $B$ be $\mathscr{P}_n$-manifolds with decompositions
\[
    \partial A   =  \partial_0 A \cup \cdots  \cup \partial_n A \, , \quad  \partial B  =  \partial_0 B \cup \cdots \cup \partial_n B .
\]
This includes the case that $\partial_i A = \emptyset$ or $\partial_i B = \emptyset$ for some $i = 0, \ldots, n$. 
 In particular, $A$ or $B$ are allowed to be smooth manifolds without singular strata.

In the remainder of the construction we fix a two dimensional compact hexagonal manifold $\mathfrak{X}$ with corners,  see the dark grey region in Figure \ref{attach}.

For $\omega \subset \{ 1, \ldots, n\}$ we will construct a manifold with corners $A \times_{\omega} B$, which, intuitively speaking,  is the cartesian product $A \times B$ with all codimension $2$-singularities $\partial_i A \times \partial_i B = (A(i) \times B(i)) \times P_i \times P_i$ for $i \in \omega$  resolved. 
The construction runs by induction on the cardinality of $\omega$.

For $\omega = \emptyset$ we set $A \times_{\omega} B := A \times B$, the cartesian product of $A$ and $B$ with its induced structure of a manifold with corners. 
In addition we smoothen the $\pi/2$-angle appearing at $\partial_0 A \times \partial_0 B$. 

Assume that $1 \leq \ell \leq n$ and $A \times_{\omega} B$ has been constructed whenever $| \omega | = \ell - 1$.
Let  $\omega \subset \{1, \ldots, n\}$ with $| \omega| = \ell$. 

Choose some $i \in \omega$ and consider the collar neighborhood  
\[
 \big(  \partial_{i} A \times [0,1) \big) \times_{\omega \setminus \{i\} } \big( \partial_{i} B \times [0,1) \big) = \big(  \partial_{i} A \times_{\omega \setminus \{i\} }  \partial_{i} B \big)  \times [0,1)^2 \subset A \times_{\omega \setminus \{i\} } B
\]
of the codimension-$2$ face $ \partial_{i} A \times_{\omega \setminus \{i\} } \partial_{i} B \subset A \times_{\omega \setminus \{i\} } B$. 
The manifold $A \times_{\omega} B$ is obtained by removing this collar neighborhood from two disjoint copies of $A \times_{\omega \setminus \{i\} } B$ and gluing in the handle $( \partial_{i} A \times_{\omega \setminus \{i\} } \partial_{i} B ) \times \mathfrak{X}$ as  indicated in Figure \ref{attach}, where $\mathfrak{X}$ is drawn in dark grey color.
The factor $P_{i} \times P_{i}$ appearing in 
 \[
      \partial_{i} A \times_{\omega \setminus \{i\} } \partial_{i} B = (A(i) \times_{\omega \setminus \{i\} } B(i)) \times P_{i} \times P_{i} 
\]
is glued  to  the left hand copy of $\big( A \times_{\omega \setminus \{i\} } B \big) \setminus \Big( \big(  \partial_{i} A \times_{\omega \setminus \{i\} }  \partial_{i} B \big)  \times [0,1)^2 \Big)$ by the identity map, and to the right hand copy  by the interchange map $(p_1, p_2) \mapsto (p_2, p_1)$.

The interchange  map $P_{i} \times P_{i}  \to P_{i}  \times P_{i}$ is orientation preserving, since  $P_{i}$ is even dimensional, and hence the manifold  $A \times_{\omega} B$ carries an induced orientation.

\begin{rem} 
\begin{enumerate}[label={(\roman*)}] 
   \item \label{always} If $\partial_i A = \emptyset$ or $\partial_i B = \emptyset$, then  $A \times_{\omega} B$ consists of two disjoint copies of $A \times_{\omega \setminus \{i\} } B$.  
   \item The manifold $A \times_{\omega} B$ does not depend on the choice of $i \in \omega$, up to canonical diffeomorphism.
\end{enumerate} 
\label{admprodind} 
\end{rem} 

For $i \in \omega$ we set 
\[
    \partial_i (A \times_{\omega} B) :=  2 \cdot \big( (\partial_i A \times_{\omega \setminus \{i\} } B) \cup_{\partial_{i} A \times_{\omega \setminus \{i\} } \partial_{i} B} ( A \times_{\omega \setminus \{i\} } \partial_{i} B) \big) \, , 
   \]
 where  the two copies on the right hand side correspond to the upper and  lower thick boundary pieces in Figure \ref{attach}.
Notice that 
\[
  (\partial_i A \times_{\omega \setminus \{i\} } B) \cap  ( A \times_{\omega \setminus \{i\} } \partial_{i} B) = \partial_{i} A \times_{\omega \setminus \{i\} } \partial_{i} B = ( A(i) \times_{\omega \setminus \{i\} } B(i) ) \times P_{i} \times P_{i}
 \]
 and that the identification along this subspace  interchanges the two factors in $P_{i} \times P_{i}$, thus realizing our initial goal.

In particular we get an induced isomorphism
\[
  \partial_{i} (A \times_{\omega} B) \cong (A \times_{\omega} B)(i) \times P_{i} 
\]
where 
\begin{equation} 
  (A \times_{\omega} B) ( i) := 2 \cdot \big( (A \times_{\omega \setminus \{i\} }  B(i)) \cup_{( A(i) \times_{\omega \setminus \{i\} } B(i)) \times  P_i } (A(i) \times_{\omega \setminus \{i\} }  B  ) \big)  \, . 
\label{inside} 
\end{equation} 

This concludes the induction step.

 \begin{figure} 
\begin{tikzpicture} [scale=0.9] 
   \filldraw[gray!10!white]   (-5, 1) -- (-4,0) -- (-2.5, 1.5) -- (-3.5, 2.5) -- (-5, 1) ;
   \filldraw[gray!10!white]    (-5,-1) -- (-4,0) -- ( -2.5, -1.5) -- (-3.5, -2.5) -- ( -5, -1) ; 
    \filldraw[gray!10!white]   (5, 1) -- (4,0) -- (2.5, 1.5) -- (3.5, 2.5) -- (5, 1) ;
   \filldraw[gray!10!white]    (5,-1) -- (4,0) -- (2.5, -1.5) -- (3.5, -2.5) -- ( 5, -1) ; 
   \filldraw[gray!30!white]    (-4,0) -- (-2.5, -1.5) to[out=45, in=135] (2.5, -1.5) -- (4,0) -- (2.5, 1.5) to[out=225, in=315] (-2.5, 1.5) -- (-4,0) ; 
  \draw[thick]  (-3,2) -- (-2.5, 1.5) ; 
  \draw[thick] (-3, -2) -- (-2.5, -1.5) ; 
  \draw[thick]  (3,2)  -- (2.5, 1.5) ;
  \draw[thick]  (3,-2) -- (2.5, -1.5) ; 
  \draw  (-2.5,1.5) -- (-1,0) -- (-2.5, -1.5) ;
  \draw  (2.5,1.5)  -- (1,0) --  (2.5,-1.5) ;
  \draw[dotted , thick] (-3.5, 2.5) --  (-3,2) ; 
  \draw[dotted,  thick] ( -3.5, -2.5) -- (-3, -2) ; 
  \draw[dotted,  thick] ( 3,2) -- (3.5, 2.5) ; 
  \draw[dotted,  thick] ( 3,-2) -- ( 3.5, -2.5) ; 
  \draw[dotted] (-2.5,1.5) -- (-4, 0)  -- (-2.5, -1.5) ;
  \draw[dotted] (2.5,1.5) -- ( 4,0) -- (2.5, -1.5) ;
  \draw[->] (0, 3.5) node   [above] {$  ( \partial_{i} A \times_{\omega \setminus \{i\} } \partial_{i} B ) \times [0,1)^2 $} -- (-2.5,0.5) ; 
   \draw[->] (0, -3.5) node   [below] {$  ( \partial_{i} A \times_{\omega \setminus \{i\}} \partial_{i} B ) \times \mathfrak{X}  $} -- (0,-0.2) ; 
  \draw[->] (0,3.5) -- (2.5, 0.5) ;
  \draw[->] (-6,2) node [above] {$( \partial_{i} A \times_{\omega \setminus \{i\}} B )  \times [0,1)$} -- (-4,1) ;
   \draw[->] (-6,-2) node [below] {$( A \times_{\omega \setminus \{i\}} \partial_{i} B ) \times [0,1)$} -- (-4,-1) ; 
    \draw[->] (6,2) node [above] {$( A \times_{\omega \setminus \{i\}} \partial_{i} B)  \times [0,1)$} -- (4,1) ;
   \draw[->] (6,-2) node [below] {$( \partial_{i} A  \times_{\omega \setminus \{i\}} B  ) \times [0,1)$} -- (4,-1) ; 
   \draw[thick]  (-2.5,1.5)  to[out=-45, in=225]  (2.5,1.5) ;
   \draw[thick]  (-2.5, -1.5) to[out=45, in=135] (2.5, -1.5) ; 
   \end{tikzpicture} 
  \caption{Construction of admissible products}
  \label{attach} 
 \end{figure}
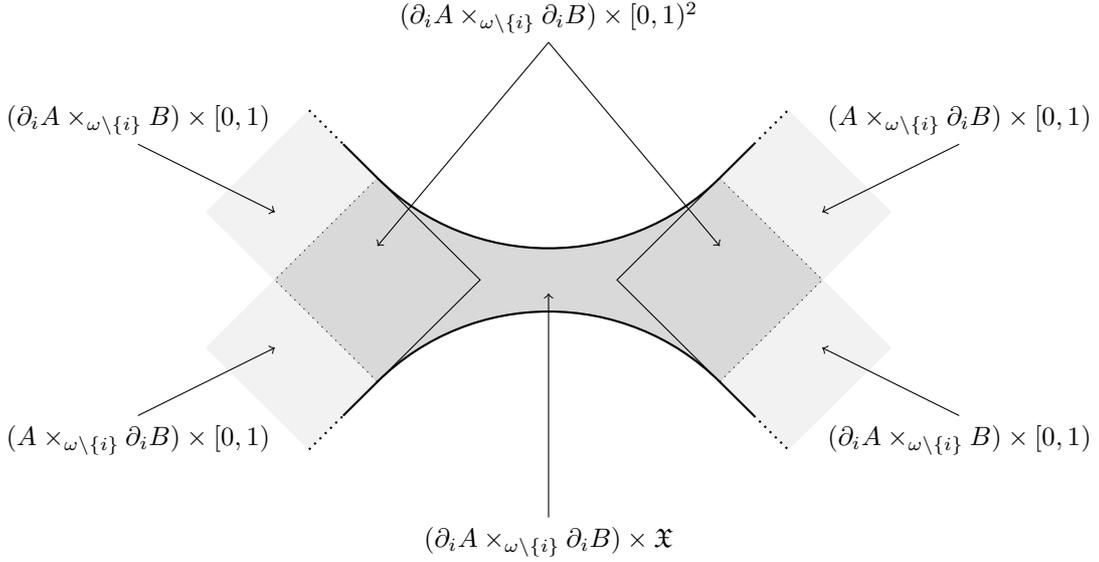

\begin{defn} \label{admissibleprod} The manifold $A \ttimes B := A \times_{\{ 1, \ldots, n\} } B$ is called the {\em admissible product} of $A$ and $B$. 
\end{defn} 

\begin{prop} \label{inducedstr} 
The admissible product $A \ttimes B$ carries an  induced structure of a $\mathscr{P}_n$-manifold.
\end{prop}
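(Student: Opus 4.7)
The plan is to equip $A \ttimes B$ with the family of faces $(A \ttimes B)(\omega)$ for every $\omega \subset \{0, 1, \ldots, n\}$, to exhibit the required decomposition isomorphisms $\partial_i (A \ttimes B)(\omega) \cong (A \ttimes B)(\omega, i) \times P_i$ for $i \notin \omega$, and to verify Condition \ref{compcond}.

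First I would extend the inductive construction $A \times_{\omega} B$ so as to define all faces $(A \ttimes B)(\omega)$ for $\omega \subset \{1, \ldots, n\}$. Starting from $(A \ttimes B)(\emptyset) := A \ttimes B$ and using \eqref{inside} iteratively---it produces $(A \ttimes B)(\omega, i)$ from $(A \ttimes B)(\omega)$ for a single $i \notin \omega$---this unambiguously specifies $(A \ttimes B)(\omega)$ for every $\omega$; by Remark \ref{admprodind} the resulting manifolds are independent of the order in which the indices are added, up to canonical diffeomorphism. The boundary face $\partial_0 (A \ttimes B)(\omega)$ is obtained by restricting the hexagonal attaching to the (possibly empty) boundary pieces $\partial_0 A$ and $\partial_0 B$ and smoothing the $\pi/2$-angles as in the $\omega = \emptyset$ case.

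The decomposition isomorphism $\partial_i (A \ttimes B)(\omega) \cong (A \ttimes B)(\omega, i) \times P_i$ for $i \notin \omega$ is then built into formula \eqref{inside}: the hexagonal handle glued along the codimension-$2$ face $\partial_i A \times_{\omega} \partial_i B$ identifies the two $P_i$-factors via the interchange $(p_1, p_2) \mapsto (p_2, p_1)$, so that a single $P_i$-factor survives along the new boundary. Because $\dim P_i$ is even, this interchange is orientation preserving, consistent with the orientation convention on the admissible product.

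The main obstacle will be the verification of Condition \ref{compcond}, which demands that for distinct $i, j \notin \omega$ the two descriptions of the codimension-$2$ corner $\partial_j \partial_i (A \ttimes B)(\omega)$ agree after composition with the interchange $P_j \times P_i \to P_i \times P_j$. Here the key observation is that the hexagonal gluing for index $i$ alters only the $P_i$-factors and leaves the $\partial_j$-strata untouched, and symmetrically for $j$; the $\mathfrak{X}$-handle for $i$ and the $\mathfrak{X}$-handle for $j$ commute through each other away from the triple corner. Consequently, taking $\partial_i$ followed by $\partial_j$ yields the same underlying base $(A \ttimes B)(\omega, i, j)$ as the reverse order by the order-independence of Remark \ref{admprodind}, and the two canonical identifications of this corner with $(A \ttimes B)(\omega, i, j) \times P_j \times P_i$ and $(A \ttimes B)(\omega, j, i) \times P_i \times P_j$ differ precisely by the required interchange of $P$-factors. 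Once this is in place, the collared product structures on $A$, on $B$, and on the hexagon $\mathfrak{X}$ assemble into collar structures witnessing the remaining $\mathscr{P}_n$-manifold axioms.
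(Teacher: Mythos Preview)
Your overall strategy matches the paper's: define the faces $(A\ttimes B)(\omega)$, exhibit the splittings $\partial_i(A\ttimes B)(\omega)\cong (A\ttimes B)(\omega,i)\times P_i$, and check Condition~\ref{compcond}. The difference is in the level of explicitness, and there is one misplaced citation that you should fix.

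Your appeal to Remark~\ref{admprodind} for the order-independence of the iterated face construction is not warranted. Remark~\ref{admprodind}(ii) asserts that the manifold $A\times_{\omega}B$ is independent of which $i\in\omega$ is chosen first in the \emph{handle-attaching} induction; it says nothing about the order in which one applies the \emph{face} operation~\eqref{inside}. Indeed, the order-independence you want---that building $(A\ttimes B)(\omega)$ by successively passing to $(\,\cdot\,)(i)$ for the elements of $\omega$ in any order yields canonically diffeomorphic results---is essentially equivalent to Condition~\ref{compcond} itself, so citing the remark here is circular. The paper avoids this by instead writing down a closed formula
\[
   (A\ttimes B)(\omega)\;=\;2^{|\omega|}\cdot\bigcup_{\omega'\subset\omega} A(\omega')\times_{\{1,\ldots,n\}\setminus\omega} B(\omega\setminus\omega')
\]
with explicit gluings along the pieces with $|\omega'\bigtriangleup\omega''|=1$, which is manifestly symmetric under permutations of~$\omega$, and then verifying for $|\omega|=2$ by direct computation that both iterated boundaries $\partial_j\partial_i(A\ttimes B)$ and $\partial_i\partial_j(A\ttimes B)$ identify with $(A\ttimes B)(i,j)\times P_j\times P_i$ up to the factor interchange. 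Your phrase ``the $\mathfrak{X}$-handle for $i$ and the $\mathfrak{X}$-handle for $j$ commute through each other'' captures the intuition, but to make it a proof you must either carry out that explicit comparison or produce the closed formula and check that iterating~\eqref{inside} reproduces it; the paper does both for $|\omega|=2$ and then states the general formula~\eqref{complicatedproduct}.
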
 

\begin{proof} By construction $A \ttimes B$ carries the structure of a manifold with corners (with respect to appropriate local models) and is a decomposed manifold with decomposition $\partial ( A \ttimes B) = \partial_0(A \ttimes B) \cup \cdots \cup \partial_n (A \ttimes B)$, where we set 
\[
    \partial_0 ( A \ttimes B) := ( \partial_0 A \ttimes B)  \cup_{\partial_0 A \ttimes \partial_0 B} ( A \ttimes \partial_0 B) \, .
\]
(Recall the smoothening of the $\pi/2$-angle at $\partial_0 A \times \partial_0 B$ at the initial stage of the inductive construction). 

It remains to define the decomposed manifolds $(A \ttimes B) ( \omega)$ for $\omega \subset \{1, \ldots, n\}$ in such a way that the compatibility Condition  \ref{compcond} for decomposed manifolds holds.
First we study the case when $\omega$ has two elements.

Let $i, j \in \{ 1, \ldots, n\}=:[n]$ with $i \neq j$.  
By   \eqref{inside} we have
\begin{align*} 
   \partial_{j} \partial_i (A \ttimes B) & = 2 \cdot \Big( \partial_j \big(A \times_{[n]\setminus\{i\} }  B(i) \big) \cup_{\left( \partial_j ( A(i) \times_{[n] \setminus\{i\} } B(i))\right) \times  P_i } \partial_j \big(A(i) \times_{[n]\setminus\{i\} }  B  ) \Big) \times P_i \, , \\
          & = 2 \cdot \Big( \big( A \times_{[n]\setminus\{i\} }  B(i)\big)(j) \cup_{\left(  A(i) \times_{[n] \setminus\{i\} } B(i)\right)(j)  \times  P_i } \big(A(i) \times_{[n]\setminus\{i\} }  B  \big)(j)  \Big) \times P_j \times P_i \, , 
\end{align*} 
and likewise
\begin{align*} 
   \partial_{i} \partial_j (A \ttimes B) & = 2 \cdot \Big( \partial_i \big(A \times_{[n]\setminus\{j\} }  B(j)\big) \cup_{\left( \partial_i ( A(j) \times_{[n] \setminus\{j\} } B(j))\right) \times  P_j } \partial_i \big(A(j) \times_{[n]\setminus\{j\} }  B  \big) \Big) \times P_j \, , \\
          & = 2 \cdot \Big( \big( A \times_{[n]\setminus\{j\} }  B(j)\big)(i) \cup_{\left( A(j) \times_{[n] \setminus\{j\} } B(j)\right)(i)   \times  P_j } \big(A(j) \times_{[n]\setminus\{j\} }  B  \big)(i)  \Big) \times P_i \times P_j \, . 
\end{align*} 
Furthermore 
\begin{align*} 
\big(A \times_{[n]\setminus\{i\} }  B(i) \big)(j)  & = 2 \cdot \Big( \big( A  \times_{[n]\setminus \{ i,j\} }  B(i,j)\big) \cup_{\left( A(j) \times_{[n]\setminus \{ i,j\}} B(i,j) \right) \times P_j}   \big( A(j) \times_{[n]\setminus\{i,j\}} B(i) \big) \Big)  \, , \\
 \big(A(i) \times_{[n]\setminus\{ i \}} B \big)(j) & = 2 \cdot \Big( \big( A(i)  \times_{[n]\setminus \{ i,j\}} B(j)\big) \cup_{\left( A(i,j) \times_{[n]\setminus \{ i,j\}} B(j) \right) \times P_j} \big( A(i,j ) \times_{[n] \setminus \{i,j\}} B \big) \Big)\, , 
 \end{align*} 
 and likewise 
 \begin{align*} 
\big(A \times_{[n]\setminus\{j\} }  B(j)\big)(i)  & = 2 \cdot \Big( \big( A  \times_{[n]\setminus \{ i,j\} }  B(i,j)\big) \cup_{\left( A(i) \times_{[n]\setminus \{ i,j\}} B(i,j) \right) \times P_i}   \big( A(i) \times_{[n]\setminus\{i,j\}} B(j) \big) \Big) \, , \\
 \big(A(j) \times_{[n]\setminus\{ j \}} B \big)(i) & = 2 \cdot \Big( \big( A(j)  \times_{[n]\setminus \{ i,j\}} B(i)\big) \cup_{\left( A(i,j) \times_{[n]\setminus \{ i,j\}} B(i) \right) \times P_i} \big( A(i,j ) \times_{[n] \setminus \{i,j\}} B \big) \Big) \, . 
 \end{align*}   
Similar computations apply to $\big(A(i) \times_{[n]\setminus\{i\} }  B(i)  \big)(j) $ and $\big(A(j) \times_{[n]\setminus\{j\} }  B(j)  \big)(i) $. 

Defining  $( A \ttimes B) (i,j)$ as  
\[
 4 \cdot \Big( \big( A \times_{[n]\setminus \{ i,j\}} B(i,j) \big) \cup \big( A(i) \times_{[n]\setminus\{i,j\}} B(j)\big)  \cup \big( A(j) \times_{[n]\setminus\{i,j\}} B(i) \big)  \cup  \big( A(i,j ) \times_{[n]\setminus\{i,j\}} B \big)  \Big)
\]
where we glue 
\begin{itemize} 
   \item $A \times_{[n]\setminus \{ i,j\}} B(i,j)$ and $  A(i) \times_{[n]\setminus\{i,j\}} B(j)$ along $\left( A(i) \times_{[n] \setminus \{ i,j\} } B(i,j) \right) \times P_i$,
   \item $A \times_{[n]\setminus \{ i,j\}} B(i,j)$ and $  A(j) \times_{[n]\setminus\{i,j\}} B(i)$ along $\left(A(j) \times_{[n] \setminus \{ i,j\} } B(i,j) \right) \times P_j$, 
    \item $A(i,j ) \times_{[n]\setminus\{i,j\}} B$ and $ A(j) \times_{[n]\setminus\{i,j\}} B(i)$ along $ \left( A(i,j) \times_{[n] \setminus \{ i,j\} } B(i) \right) \times P_i$,
    \item $A(i,j ) \times_{[n]\setminus\{i,j\}} B$ and $ A(i) \times_{[n]\setminus\{i,j\}} B(j)$ along $\left( A(i,j) \times_{[n] \setminus \{ i,j\} } B(j) \right) \times P_j$, 
   \end{itemize} 
we hence obtain
\[
  \partial_j \partial_i ( A \ttimes B) \cong ( A \ttimes B) (i,j) \times P_i \times P_j \cong  \partial_i \partial_j ( A \ttimes B)  \, .
\]
Arguing in a similar manner for arbitrary $\omega \subset \{1,\ldots, n\}$ we can work with 
\begin{equation} \label{complicatedproduct} 
   ( A \ttimes B) (\omega) := 2^{| \omega|} \cdot \bigcup_{\omega' \subset \omega}   A( \omega') \times_{\{1, \ldots, n\} \setminus \omega} B(\omega \setminus \omega') 
\end{equation} 
with gluings of components associated to  $\omega', \omega''  \subset \omega$ with $| \omega' \bigtriangleup \omega''| = 1$ (cardinality of symmetric difference)  in order to identify $A \ttimes B$ as a $\mathscr{P}_n$-manifold. 

\end{proof} 

Let $X$ and $Y$ be topological spaces, let $A^d$ and $B^{e}$ be closed oriented $\mathscr{P}_n$-manifolds of dimensions $d$ and $e$ and let $\alpha : A \to X$ and $\beta : B \to Y$ be maps which are compatible with the singularity structures of $A$ and $B$ (see Definition \ref{mapbs}).
Then the induced map $\alpha  \times \beta : A \times B \to X \times Y$ is compatible with our inductive construction of $A \ttimes B$ and we obtain an induced map $\alpha \ttimes \beta : A \ttimes B \to X \times Y$. 
This results in a bilinear map of  bordism theories 
\[
  \ttimes :  \Omega^{\SO, \mathscr{P}_n}_d (X) \times \Omega^{\SO, \mathscr{P}_n}_{e} (Y) \to \Omega^{\SO, \mathscr{P}_n}_{d + e}  ( X \times Y) \, , 
\]
(the theories $\Omega^{\SO, \mathscr{P}_n}_*(-)$ were introduced after Definition \ref{defbordsing}) and this construction extends to relative bordism groups. 
With the natural transformation $u : \Omega^{\SO, \mathscr{P}_n} _*(-)   \to  \HH_*(-) $ from Construction \ref{constr:trafo} we hence obtain the following result. 

\begin{prop} \label{prop:admissible} Let $\times$ denote  the cross product in singular homology. 
Then for  all pairs of topological spaces $(X,S)$ and $(Y, T)$ we have a commutative diagram 
\[
\xymatrix{
        \Omega^{\SO, \mathscr{P}_n}_d (X,S) \times  \Omega^{\SO,\mathscr{P}_n}_{e}(Y, T) \ar[r]^-{\ttimes} \ar[d]^{u \times u} & \Omega^{\SO,  \mathscr{P}_n}_{d+e}(X \times Y, X \times T \cup S \times Y)  \ar[d]^{u} \\ 
               \HH_d(X,S) \times \HH_{e} (Y, T) \ar[r]^-{(a, b) \mapsto 2^n \cdot (a \times b)} & \HH_{d+ e} (X \times Y, X \times T \cup S \times Y) \, . }  
 \]
  \end{prop}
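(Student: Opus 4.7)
The plan is to prove the commutativity by constructing a continuous map
\[
    q : (A \ttimes B)' \lra A' \times B'
\]
of degree $2^n$ between the singularity-coned-off quotients appearing in the definition of $u$ (see Construction~\ref{constr:trafo}), compatible with the induced maps to $X \times Y$. Once $q$ is in place, we chase definitions:
\[
    u(\alpha \ttimes \beta) = (\alpha \ttimes \beta)'_*\bigl([(A \ttimes B)', \partial_0 (A \ttimes B)']\bigr) = (\alpha' \times \beta')_* \, q_* \bigl([(A \ttimes B)', \cdot]\bigr) = 2^n \cdot (u(\alpha) \times u(\beta)),
\]
and similarly in the relative setting, yielding the claimed factor of $2^n$.

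To construct $q$, I would exploit the inductive definition of $A \times_\omega B$. At each stage the step $A \times_{\omega \setminus \{i\}} B \leadsto A \times_\omega B$ takes two disjoint copies and glues in a handle $(\partial_i A \times_{\omega\setminus\{i\}} \partial_i B) \times \mathfrak{X}$. Since $\mathfrak{X}$ deformation retracts to a point, there is an obvious ``fold and collapse'' map $A \times_\omega B \to A \times_{\omega \setminus \{i\}} B$ that identifies the two copies outside the handle and sends the handle to the codimension-two face $(A(i) \times_{\omega \setminus \{i\}} B(i)) \times P_i \times P_i$. Composing these folds across $\omega = \{1, \ldots, n\}$ gives $A \ttimes B \to A \times B$, and post-composing with $A \times B \to A' \times B'$ yields the desired $q$ after verifying that it descends through the quotient defining $(A \ttimes B)'$. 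The compatibility with the map to $X \times Y$ is automatic since $\alpha \ttimes \beta$ is, by construction, obtained from $\alpha \times \beta$ via the same inductive procedure.

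To compute the degree of $q$, I would argue by induction on $n$. The base case $n=0$ is trivial as $q$ is the identity. For the inductive step, the fold at step $i$ doubles the top-dimensional stratum (two copies of $A \times_{\omega\setminus\{i\}} B$ mapping by the identity to the same target), while the attached handle $(A(i) \times_{\omega\setminus\{i\}} B(i)) \times P_i \times P_i \times \mathfrak{X}$, after coning off the two $P_i$-factors inside $A'$ and $B'$ respectively, lands in a subspace of dimension strictly less than $\dim A + \dim B$ and hence contributes nothing to top-dimensional homology. This gives a degree-$2$ multiplication at each inductive step, for a total degree of $2^n$. Alternatively, the count matches the factor $2^{|\omega|}$ appearing in the closed-form formula \eqref{complicatedproduct} for $(A \ttimes B)(\omega)$.

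The main obstacle is the rigorous verification that $q$ is a well-defined continuous map on the quotient $(A \ttimes B)'$, together with a careful check that the collapses are compatible with the nested identifications produced at each stage of the inductive construction. Concretely, one must verify that the image in $A' \times B'$ of a singular stratum $\partial_i(A \ttimes B) \cong (A \ttimes B)(i) \times P_i$ lies in the $P_i$-fibre of the cone-point, which follows from tracing \eqref{inside} through the coning operation, and that this identification is consistent across all $i$. The bookkeeping is precisely of the same flavor as in the proof of Proposition~\ref{inducedstr}, and once it is carried out the relative case $(X,S) \times (Y,T)$ adds nothing new, since $\partial_0(A \ttimes B)$ maps into $X \times T \cup S \times Y$ by construction.
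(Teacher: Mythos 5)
Your overall strategy -- a comparison map $q$ from $(A \ttimes B)'$ to $A' \times B'$ over $X \times Y$ carrying the fundamental class to $2^n$ times $[A'] \times [B']$ -- is the natural way to make the statement precise, and indeed the paper offers no separate proof: it treats the proposition as immediate from the construction of $\ttimes$ and of $u$, with the factor $2^n$ coming from the doubling at each inductive step. So the plan is sound; the problem is in how you build $q$.

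The genuine gap is your claim that the folds assemble into a map $A \ttimes B \to A \times B$ \emph{before} passing to the quotients. No such map exists in general: the handle $(\partial_i A \times_{\omega\setminus\{i\}} \partial_i B) \times \mathfrak{X}$ is glued to the two copies by identifications that differ by the interchange of the two $P_i$-factors, so extending the fold over $\mathfrak{X}$ into $\partial_i A \times \partial_i B \times [0,1]^2 \subset A \times B$ would require a homotopy from the identity to the interchange self-map of $P_i \times P_i$, which in general does not exist (the interchange acts nontrivially on $\HH_{\dim P_i}(P_i \times P_i)$). This is exactly why the statement is about the coned-off spaces: only after the $P_i$-factors are collapsed in $A'$ and $B'$ can one interpolate, by routing the handle through the loci where the collar coordinate of $\partial_i A$ (respectively $\partial_i B$) vanishes and the $P_i$-coordinate is forgotten. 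Relatedly, your explicit prescription -- identity on the two copies, handle collapsed to the codimension-two face -- is discontinuous: the handle is attached along the inner boundary of the removed collar, not along the face, so gluing-locus points would be sent simultaneously to collar parameters near the cut and to the corner. As a consequence the degree computation via ``the handle image has dimension $< d+e$'' is not available for the map you describe; after constructing $q$ correctly (into $A' \times B'$ only, handle routed through the collapsed loci, which is compatible with the maps to $X \times Y$ because $\alpha$ and $\beta$ factor through $A(i)$ and $B(i)$ on the collars), the cleanest way to get the factor $2^n$ is a local degree count at a generic point of the top stratum away from all collars, where $q$ is an orientation-preserving $2^n$-to-one local homeomorphism. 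With these repairs your argument goes through, and the relative case is, as you say, automatic.
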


 \begin{rem} The factor $2^n$ appears even if $A$ or $B$ are without singular strata, see Remark \ref{admprodind}  \ref{always}.  
In particular the product on $\Omega^{\SO, \mathscr{P}_n}_* (-)$   is not unital for $n \geq 1$.  
\end{rem}

Now, choose Riemannian metrics $h_i$ on $P_i \in \mathscr{P}$ for $i \geq 1$.
Let $A$ and $B$ be $\mathscr{P}_n$-manifolds and  let $g$ and $h$ be $\mathscr{P}$-compatible metrics on $A$ and $B$ in the sense of Definition \ref{dist}. 
Let $\lambda, \mu >  0$ and $\delta, \epsilon \geq 9$. 
With this choice of $\delta$ and $\epsilon$, the local models $U \times [0,1)^k$ on $A$ and $B$, equipped with the scaled metrics $g_{(\lambda, \delta)}$ and $h_{(\mu, \epsilon)}$ from Construction \ref{constrscal},  can be canonically extended to local models $U \times [0,3)^k$ on which these scaled metrics still restrict to product metrics $g^U \oplus \eta$ and $h^U \oplus \eta$, respectively, with the Euclidean metric $\eta$  on $[0,3)^k$. 

We equip the hexagonal manifold $\mathfrak{X}$ with some admissible Riemannian metric $\sigma$ (see Definition \ref{defadmissible}) with respect to which each side has length $3$.

With these data we construct a metric $g_{(\lambda, \delta)} \tilde{\oplus} h_{(\mu, \epsilon)}$ on $A \ttimes B$ along the inductive construction of $A \ttimes B$ before Definition \ref{admissibleprod}, starting with the product metric $g \oplus h$ on $A \times B$ and working with collar factors $[0,3)^2$ and $[0,3)$ instead of $[0,1)^2$ and $[0,1)$ in Figure \ref{attach}.   
Here it is important that the interchange map on $P_i \times P_i$ is an isometry with respect to $h_i \oplus h_i$.

By the choice of $\delta$ and $\epsilon$ and the metric  $\sigma$ on $\mathfrak{X}$  we hence obtain a $\mathscr{P}$-compatible metric $g_{(\lambda, \delta)} \tilde{\oplus} h_{(\mu, \epsilon)}$ on $A \ttimes B$.

\begin{defn} \label{adm_prod_met} We call  $g_{(\lambda, \delta)} \tilde{\oplus} h_{(\mu ,\epsilon)}$ the {\em admissible product metric} of $g_{(\lambda, \delta)}$ and $h_{(\mu, \epsilon)}$. 
\end{defn} 

We obtain the following version of the well known ``shrinking one factor'' principle. 

\begin{prop} \label{posproduct}  Assume that $A$  and $B$ are compact and $g$ is positive (see Definition \ref{comppos}). 
Then for any $\mu  \geq  1  $ and $\epsilon \geq 9$ there exists $0 < \lambda \leq 1 $ and $\delta \geq 9$ such that for all $\delta' \geq \delta$ the following holds: 
\begin{enumerate}[label={(\roman*)}]
  \item  \label{one} The metric $g_{(\lambda, \delta')} \tilde{\oplus} h_{(\mu, \epsilon)}$  on $A \ttimes B$ is positive.
   \item \label{two} The metric $g_{(\lambda, \delta')}$   is positive.
   \item \label{gentle} Let $C$ be a compact $\mathscr{P}_n$-manifold, let $k$ be a $\mathscr{P}$-compatible metric on $C$, and let $\nu >0 $ and $\theta \geq 9$ be such that the scaled metric $k_{(\nu, \theta)}$ is positive.  
           Then $g_{(\lambda, \delta')} \tilde{\oplus} k_{(\nu, \theta)}$ is positive.
\end{enumerate} 
 
\end{prop}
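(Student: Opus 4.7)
The plan is a ``shrinking one factor'' argument based on Proposition \ref{scalescal}\ref{duo}, exploiting the fact that the admissible product metric is locally a Riemannian product. Concretely, by the inductive construction preceding Definition \ref{admissibleprod} and by formula \eqref{complicatedproduct}, on each piece $A(\omega') \times_{\{1,\ldots,n\}\setminus\omega} B(\omega\setminus\omega')$ of the stratum $(A \ttimes B)(\omega)$ the induced metric $(g_{(\lambda,\delta')} \tilde{\oplus} h_{(\mu,\epsilon)})(\omega)$ is, at every point, a Riemannian product whose factors are: a value of $g_{(\lambda,\delta')}(\omega')$, a value of $h_{(\mu,\epsilon)}(\omega \setminus \omega')$, several of the fixed singularity metrics $h_i$ for $i \in \{1,\ldots,n\} \setminus \omega$, a Euclidean factor from the local corner model, and, inside hexagonal handle regions only, the fixed admissible metric $\sigma$ on $\mathfrak{X}$. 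Since scalar curvature is additive in Riemannian products, the scalar curvature of the admissible product metric at any point is just the sum of the scalar curvatures of these factors.

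Part \ref{two} is an immediate consequence of Proposition \ref{scalescal}\ref{duo} applied with any fixed $s > 0$. For parts \ref{one} and \ref{gentle}, let $K \geq 0$ be a common upper bound for the absolute values of the scalar curvatures of $h_1,\ldots,h_n$ on $P_1,\ldots,P_n$, of $\sigma$ on $\mathfrak{X}$, and---in case \ref{one}---of $\scal_{h_{(\mu,\epsilon)}(\omega'')}$ on the strata of the compact manifold $B$; in case \ref{gentle} the corresponding value $\scal_{k_{(\nu,\theta)}(\omega'')}$ is nonnegative by hypothesis and therefore requires no bound. The additivity observed above then yields, on every stratum of $A \ttimes B$ or of $A \ttimes C$, a pointwise lower bound of the form $\scal \geq \scal_{g_{(\lambda,\delta')}(\omega')}(x) - NK$ for some $\omega' \subset \omega$ and $x \in A(\omega')$, where $N$ depends only on $n$. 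Invoking Proposition \ref{scalescal}\ref{duo} with $s := NK + 1$ now produces $\lambda_0 \leq 1$ and, for every $\lambda \in (0,\lambda_0]$, some $\delta_0 \geq 9$ (enlarging the output of \ref{scalescal}\ref{duo} if necessary) such that $\scal_{g_{(\lambda,\delta')}(\omega')} > s$ uniformly in $\omega' \subset \{1,\ldots,n\}$ and $\delta' \geq \delta_0$. A single such choice of $\lambda$ and $\delta$ then simultaneously verifies \ref{one}, \ref{two}, and \ref{gentle}.

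The subtle point---and the main obstacle to a careless argument---is that the constant $K$ used in \ref{gentle} must not depend on the auxiliary data $(C, k, \nu, \theta)$, since \ref{gentle} has to hold for \emph{arbitrary} such data with $\lambda$ and $\delta$ fixed in advance by \ref{one} and \ref{two}. This uniformity is bought exactly by the positivity hypothesis on $k_{(\nu,\theta)}$, which removes the $k$-side from the pool of possibly negative summands, so that all remaining negative contributions are governed by the fixed background data $\mathscr{P}$ and $\sigma$ which were chosen once and for all. Beyond this bookkeeping, the verification is a routine pointwise computation on local models of $A \ttimes B$ and of $A \ttimes C$, in the same flavor as the proof of Proposition \ref{standard_riem}.
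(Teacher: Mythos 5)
Your argument is correct and is essentially the paper's own proof: both exploit additivity of scalar curvature on the product pieces of $(A \ttimes B)(\omega)$ from \eqref{complicatedproduct}, bound the negative contributions coming from the fixed hexagon metric $\sigma$ (the paper absorbs the unscaled $h_i$ collar factors into the stratum metrics rather than bounding them separately, a minor bookkeeping difference), and then invoke Proposition \ref{scalescal} \ref{duo} with a sufficiently large $s$ determined by $h_{(\mu,\epsilon)}$ and the fixed background data, which also yields \ref{gentle} uniformly in $(C,k,\nu,\theta)$ exactly as you observe.
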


\begin{proof} Set $\min(  \scal_{\sigma} )  := \min_{x \in \mathfrak{X}} \{  \scal_{\sigma}(x) \}  \in \R$. 
We will use a similar notation for other metrics instead of $\sigma$.
Note that  $\min ( \scal_{\sigma} )  < 0$ by the Gauss-Bonnet formula, since the boundary pieces of $\mathfrak{X}$ are totally geodesic and meet at angles $\pi/2$. 

At each  inductive step in the construction of $A \ttimes B$ we replace two collar factors $[0,3)^2$ (with zero scalar curvature) by a factor $\mathfrak{X}$ equipped with the metric $\sigma$. 
Hence, and more generally  for $\omega \subset \{1, \ldots, n\}$, we obtain with   \eqref{complicatedproduct} 
\[
   \min \big(  \scal_{ \left( g_{(\lambda, \delta)}  \tilde{\oplus}  h_{(\mu, \epsilon)}\right)(\omega) } \big)   \geq  \min_{\omega' \subset \omega}   \{  \min ( \scal_{g_{(\lambda, \delta) }(\omega')} )  + \min(   \scal_{h_{(\mu, \epsilon)} (\omega \setminus \omega')} )  + (n - |\omega|) \cdot \min( \scal_{\sigma})   \} \, .
\]
By Proposition \ref{scalescal} \ref{duo} applied to  
\[
   s := \max_{\omega \subset \{1, \ldots, n\}} \{ | \min ( \scal_{h_{(\mu,\epsilon)}(\omega)})| + n \cdot |\min(\scal_{\sigma}) | \},  
\]
 we find $0 < \lambda \leq 1 $ and $\delta  \geq 9$ with the stated properties. 
\end{proof}

\section{Positive cross products and Toda brackets} \label{Toda} 

Let $X$ and $Y$ be topological spaces and consider the K\"unneth sequence of singular homology groups 
\[
    0 \to \HH_*(X) \otimes \HH_*(Y) \stackrel{\times}{\longrightarrow} \HH_*(X \times Y)  \longrightarrow \Tor(\HH_*(X) \, , \HH_*(Y))_{*-1}  \to 0  \, . 
\]
In this section we study positive homology classes (see Definition \ref{poshom}) related to the homological cross product $\times$ and the Tor-term in this sequence.

\begin{setting} \label{constrtoda} Let  
\[
   [\alpha : A \to X] \in \Omega _d^{SO, \mathscr{Q}_n}(X) \text{ and }  [\beta : B \to Y] \in \Omega _{e}^{SO, \mathscr{Q}_n}(Y) 
\]
with $\mathscr{Q}_n$-manifolds $A$ and $B$  \rot{and let $a \in \HH_d(X)$ and $b \in \HH_{e}(Y)$} be the images of these bordism classes under the natural transformation $u$ from Construction \ref{constr:trafo}. 
\end{setting} 

Propositions  \ref{prop:admissible} and  \ref{posproduct} \ref{one}  imply the following result. 

\begin{prop} \label{prop:prodpos} Assume that at least one of the  Baas-Sullivan manifolds $A$ or $B$ is equipped with a $\mathscr{Q}$-compatible positive metric (see Definition \ref{comppos}). 
Then the class $2^n  \cdot \big( a \times b \big)  \in \HH_{d+ e} (X \times Y)$ is positive. 
\end{prop}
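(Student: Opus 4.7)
The plan is to realize $2^n \cdot (a \times b)$ geometrically as the image under $u$ of the admissible product bordism class $[\alpha \ttimes \beta : A \ttimes B \to X \times Y]$, and then exhibit a $\mathscr{Q}$-compatible positive metric on $A \ttimes B$ using the scaled admissible product metric from Definition \ref{adm_prod_met}. The first step is a direct appeal to Proposition \ref{prop:admissible}: the commutativity of its diagram immediately gives
\[
  u\big( [\alpha \ttimes \beta : A \ttimes B \to X \times Y] \big) \;=\; 2^n \cdot (a \times b) \in \HH_{d+e}(X \times Y).
\]
So what must be shown is that $A \ttimes B$ carries a $\mathscr{Q}$-compatible positive metric in the sense of Definition \ref{comppos}.

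First I would assume without loss of generality that $A$ is the factor equipped with a $\mathscr{Q}$-compatible positive metric $g$; the case that $B$ has the positive metric is handled by the same argument applied to the admissible product $B \ttimes A \to Y \times X$, composed with the (isometric) interchange homeomorphism $Y \times X \to X \times Y$, which preserves positivity of the underlying metric and yields the same homology class up to sign. Next, by Lemma \ref{compat}, the manifold $B$ admits at least one $\mathscr{Q}$-compatible metric $h$; positivity of $h$ is not required.

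The main step is to apply Proposition \ref{posproduct} \ref{one}. Fix any $\mu \geq 1$ and $\epsilon \geq 9$ to obtain the scaled metric $h_{(\mu,\epsilon)}$ on $B$. Proposition \ref{posproduct} \ref{one} then supplies $0 < \lambda \leq 1$ and $\delta \geq 9$ such that for all $\delta' \geq \delta$ the admissible product metric
\[
  g_{(\lambda,\delta')} \; \tilde{\oplus} \; h_{(\mu,\epsilon)}
\]
on $A \ttimes B$ is positive in the sense of Definition \ref{comppos}. By Proposition \ref{inducedstr}, $A \ttimes B$ is a $\mathscr{Q}_n$-manifold, and the admissible product metric is $\mathscr{Q}$-compatible by construction (see the paragraph preceding Definition \ref{adm_prod_met}, where compatibility is built in via the choice of extended collar widths $\delta, \epsilon \geq 9$ and the isometric interchange of $P_i$-factors). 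Hence $A \ttimes B$ satisfies the conditions of Definition \ref{poshom}, which together with the identification of $u(\alpha \ttimes \beta)$ above shows that $2^n \cdot (a \times b)$ is positive.

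I do not expect a significant obstacle, since the statement is essentially a bookkeeping assembly of the three preceding results (Propositions \ref{prop:admissible}, \ref{inducedstr}, and \ref{posproduct}\ref{one}). The only mild subtlety is making sure that $\alpha \ttimes \beta$ is indeed compatible with the singularity structure of $A \ttimes B$ as required for a class in $\Omega^{\SO,\mathscr{Q}_n}_{d+e}(X \times Y)$ — this follows because $\alpha$ and $\beta$ are compatible and the inductive gluing in Figure \ref{attach} only modifies the underlying manifold, not the factorization of $\alpha \times \beta$ through projection off the collar directions.
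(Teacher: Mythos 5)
Your proposal is correct and is essentially the paper's own argument: the paper proves this proposition by simply citing Propositions \ref{prop:admissible} and \ref{posproduct} \ref{one}, which is exactly the assembly you carry out (identify $u(\alpha \ttimes \beta) = 2^n(a\times b)$, equip $B$ with some $\mathscr{Q}$-compatible metric via Lemma \ref{compat}, and apply the shrinking-one-factor result to get a $\mathscr{Q}$-compatible positive metric on $A \ttimes B$). Your extra care with the case where $B$ rather than $A$ carries the positive metric, handled via $B \ttimes A$ and functoriality of positive homology under the swap (the sign being absorbed since $\HH^{\mathscr{Q},+}_*$ is a subgroup), is a detail the paper leaves implicit but is perfectly fine.
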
 
 
Next we discuss the Tor term in the K\"unneth sequence. 
Let $r \geq 2$ be an integer with $r a= 0 = rb$.
Let  $(C_*(X), \partial)$ and $(C_*(Y), \partial)$ be the integral chain complexes of $X$ and $Y$. 
We  pick chains $\overline{a} \in C_{d+1}(X)$ and $\overline{b}  \in C_{e+1}(Y)$ whose boundaries represent $ra$ and $rb$ respectively. 
The cycle  
\begin{equation} \label{defToda} 
    \frac{1}{r} \cdot  \partial ( \overline a \otimes \overline b ) \in (C_*(X) \otimes C_*(Y))_{d + e + 1}   
\end{equation} 
represents a {\em Toda bracket} coset
\[
     \langle a , r , b \rangle  \subset \HH_{d+e+1} (X \times Y) 
\]
with respect to the submodule  $ (a \times \HH_{e+1}(Y) ) \oplus ( \HH_{d+1}(X) \times b) \subset \HH_{d + e +1} (X \times Y)$, which is independent from the choice of $\overline{a}$ and $\overline{b}$. 
It is well known \cite{EML}*{Section  12} that such Toda brackets generate a submodule of $\HH_{d+e+1} (X \times Y)$ which maps surjectively onto $\Tor(\HH_*(X), \HH_*(Y))_{d+e}$.  
In the following we  give a bordism theoretic description of Toda brackets. 

By Proposition \ref{isobordhom} there exists $m \geq 0$ such that 
\[
   2^m\cdot r \cdot  [\alpha: A \to X] = 0  \text{ and }   2^m \cdot r \cdot [\beta : B \to Y] = 0 \, . 
\]
Hence, possibly after passing to some larger $n$, there are  compact oriented $\mathscr{Q}_n$-manifolds $V$ and $W$ with boundaries $\partial_0 V = \coprod_{2^m \cdot r} A$ and $\partial_0 W = \coprod_{2^m \cdot r} B$ such that $\coprod_{2^m \cdot r}  (A \stackrel{\alpha}{\to} X)$ and $\coprod_{2^m \cdot r} (B \stackrel{\beta}{\to} Y)$ can be extended to  maps $\overline{\alpha} : V \to X$ and $\overline{\beta} : W \to Y$ where $\overline{\alpha}$ and $\overline{\beta}$ are compatible with the singularity structures of $V$ and $W$.

By \eqref{defToda} and Proposition  \ref{prop:admissible} the coset   $ 2^{m+n}  \cdot \langle a, r, b \rangle \subset \HH_*(X \times Y)$ is  represented by 
\begin{equation} \label{Toda_repr} 
  (  \overline{\alpha} \ttimes \beta ) \cup ( \alpha \ttimes   \overline{\beta}) :  ( V  \ttimes   B)   \cup_{\partial_0  V \ttimes  B  = A \ttimes \partial_0 W}  ( A \ttimes  W)  \to X \times Y \, . 
\end{equation}

Let  $A$ and $B$ be equipped with $\mathscr{Q}$-compatible positive metrics $g$ and $h$. 
The metrics $\coprod_{2^m \cdot r}  g$ on $\coprod_{2^m \cdot r} A$ and  $\coprod_{2^m \cdot r} h$ on  $\coprod_{2^m \cdot r} B$,  can be extended to (not necessarily positive) $\mathscr{Q}$-compatible metrics $\overline g$ and $\overline h $ on $V$ and $W$ (compare the proof of Lemma \ref{compat}). 

By Proposition  \ref{scalescal} \ref{uno}  we find $\delta_0 , \epsilon_0 \geq 9$ such that for all $\delta \geq \delta_0$ and $\epsilon \geq \epsilon_0$ the scaled metrics $g_{(1, \delta)}$ and $h_{(1, \epsilon)}$ are positive.

Choose $(\lambda, \delta)$ for $A$ according to Proposition \ref{posproduct} for the scaled metric ${\overline h}_{(1, \epsilon_0)}$ on $W$, and in an analogous fashion choose  $(\mu, \epsilon)$ for $B$ for the scaled metric ${\overline g}_{(1, \delta_0)}$ on $V$. 
With these choices the  admissible product metrics  ${\overline g}_{(1, \delta_0)} \tilde{\oplus} h_{(\mu, \epsilon)}$ on $V \ttimes B$  and $g_{(\lambda, \delta)} \tilde{\oplus} {\overline h}_{(1, \epsilon_0)}$ on $A \ttimes W$ are positive by Proposition  \ref{posproduct} \ref{one}. 
In order to glue the induced metrics on the common boundary $\coprod_{2^m \cdot r} A \ttimes B$ we need the following result.

\begin{lem} \label{isotopydifficult} The metrics $g_{(1,\delta_0)} \tilde{\oplus} h_{(\mu, \epsilon)}$ and $g_{(\lambda, \delta)} \tilde{\oplus} h_{(1,\epsilon_0)}$ on  $A  \ttimes B$ are isotopic,  and hence concordant, through positive $\mathscr{Q}$-compatible metrics. 
\end{lem}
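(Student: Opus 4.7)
The strategy is to construct a continuous path $t \mapsto (\lambda_t, \delta_t, \mu_t, \epsilon_t)$ in parameter space and set $G_t := g_{(\lambda_t, \delta_t)} \tilde{\oplus} h_{(\mu_t, \epsilon_t)}$; by Construction \ref{constrscal} and the inductive definition of the admissible product in Definition \ref{admissibleprod}, the family $G_t$ varies smoothly with $t$ and consists of $\mathscr{Q}$-compatible metrics. What remains is to choose the path so that each $G_t$ is positive, which then yields the required isotopy and hence concordance.

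I would route the path through an intermediate highly-compressed configuration $(\lambda_{\ast}, \delta_{\ast}, \mu, \epsilon)$ with $\lambda_{\ast} \leq \min(\lambda, 1)$ very small and $\delta_{\ast}, \epsilon_{\ast} \geq 9$ very large, and build it as the concatenation of three legs: (i) vary the $g$-parameters from $(1, \delta_0)$ to $(\lambda_{\ast}, \delta_{\ast})$, keeping $(\mu_t, \epsilon_t) = (\mu, \epsilon)$; (ii) vary the $h$-parameters from $(\mu, \epsilon)$ to $(1, \epsilon_0)$, keeping $(\lambda_t, \delta_t) = (\lambda_{\ast}, \delta_{\ast})$; (iii) vary the $g$-parameters from $(\lambda_{\ast}, \delta_{\ast})$ to $(\lambda, \delta)$, keeping $(\mu_t, \epsilon_t) = (1, \epsilon_0)$.

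Positivity of $G_t$ along each leg is verified by the same O'Neill-based estimate that drives the proof of Proposition \ref{posproduct}:
\[
   \scal_{G_t(\omega)} \;\geq\; \min_{\omega' \subset \omega} \bigl\{ \scal_{g_{(\lambda_t, \delta_t)}(\omega')} + \scal_{h_{(\mu_t, \epsilon_t)}(\omega \setminus \omega')} + (n - |\omega|)\min(\scal_{\sigma}) \bigr\}.
\]
On leg (i), the $h$-term is held at the value it had for $G_0$, for which the bound was already positive by hypothesis; as $\lambda_t$ shrinks from $1$, the $g$-term only increases, while $\delta_t$ is taken large enough (via Proposition \ref{scalescal} \ref{uno} applied to the compact range $\Lambda = [\lambda_{\ast}, 1]$) to keep $g_{(\lambda_t, \delta_t)}$ itself positive. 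On legs (ii) and (iii), the parameter $\lambda_t$ remains in a small range contained in $(0, 1]$, so Proposition \ref{scalescal} \ref{duo} applied to a prescribed $s > 0$ exceeding $n \cdot |\min(\scal_{\sigma})|$ plus the largest possible downward fluctuation of $\scal_{h_{(\mu_t, \epsilon_t)}(\omega \setminus \omega')}$ along the traversed $h$-parameter range ensures that the $g$-term alone makes the right-hand side positive, regardless of the remaining motion of $(\mu_t, \epsilon_t)$ or the precise value of $\lambda_t$.

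The main technical obstacle is the simultaneous choice of $\lambda_{\ast}, \delta_{\ast}, \epsilon_{\ast}$ together with the collar parameters along each leg so that all the uniform bounds hold at once. These reduce to finitely many conditions of the form ``$\lambda_{\ast}$ small enough, $\delta_{\ast}$ and $\epsilon_{\ast}$ large enough,'' coming from applications of Proposition \ref{scalescal} to the various compact parameter ranges; they can be met by taking minima of the $\lambda$'s and maxima of the $\delta, \epsilon$'s produced. Matching at the two junctions is automatic since the legs share endpoints by construction, and the resulting family $G_t$ supplies the desired positive isotopy.
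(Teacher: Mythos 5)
Your overall scheme (move one factor's scaling parameters at a time and control positivity through the O'Neill-type estimate from the proof of Proposition \ref{posproduct}) is the right one, but the decisive third leg has a genuine gap. On leg (iii) you move the $g$-parameters from the compressed value $(\lambda_{\ast},\delta_{\ast})$ up to $(\lambda,\delta)$ while $h$ sits at $(1,\epsilon_0)$, and you justify positivity by Proposition \ref{scalescal} \ref{duo} ``regardless of \dots the precise value of $\lambda_t$''. But Proposition \ref{scalescal} \ref{duo} only yields a threshold $\lambda_0=\lambda_0(s)$ and gives $\scal>s$ for $\lambda_t\leq\lambda_0$; nothing guarantees that the value $\lambda$ produced in the setup by Proposition \ref{posproduct} (for $A$ against $\overline h_{(1,\epsilon_0)}$) lies below this threshold, nor even that $\scal_{g_{(\lambda,\delta)}(\omega)}$ exceeds $n\,|\min(\scal_\sigma)|$ at the endpoint: at the level of the statements available, all one knows about $(\lambda,\delta)$ is Proposition \ref{posproduct} \ref{one}--\ref{gentle}, which concern products $g_{(\lambda,\delta')}\tilde{\oplus}(\cdot)$ with $\delta'\geq\delta$ only. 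Since the pair $(1,\epsilon_0)$ carries no ``gentle'' property (the choice of $\epsilon_0$ only makes $h_{(1,\epsilon_0)}$ positive, not dominant over the hexagon deficit), positivity of $G_t$ near the end of leg (iii) is unsupported, and the compression argument has been abandoned by the time $\lambda_t$ reaches $\lambda$. A milder instance of the same overreach occurs in leg (i): ``as $\lambda_t$ shrinks the $g$-term only increases'' is not a property of the $(\lambda,\delta)$-scaling (the interpolation and O'Neill terms are not monotone in $\lambda$ -- this is exactly why Proposition \ref{scalescal} is needed), and the positivity of the lower bound at $t=0$ is not a hypothesis.

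The missing ingredient is Proposition \ref{posproduct} \ref{gentle}, which you never invoke and which is precisely what the choices of $(\lambda,\delta)$ and $(\mu,\epsilon)$ in the setup were designed to supply. The paper's proof moves the $g$-parameters from $(1,\delta_0)$ to $(\lambda,\delta)$ while $h$ is frozen at $(\mu,\epsilon)$, keeping the moving factor positive by the detour $(1,\delta_0)\to(1,\delta'_0)\to(\lambda,\delta'_0)\to(\lambda,\delta)$ (Proposition \ref{scalescal} \ref{uno} for $\Lambda=[\lambda,1]$ together with Proposition \ref{posproduct} \ref{two}), so that each intermediate product is positive by the gentle property of $(\mu,\epsilon)$; then it moves the $h$-parameters from $(\mu,\epsilon)$ to $(1,\epsilon_0)$ with $g$ frozen exactly at $(\lambda,\delta)$, using the gentle property of $(\lambda,\delta)$. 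In particular the final approach to the configuration $(\lambda,\delta,1,\epsilon_0)$ is made by moving the $h$-side only, which is the only statement-level way to certify positivity there. Your legs (i) and (ii) can be repaired in this spirit (leg (i) by citing Proposition \ref{posproduct} \ref{gentle} for $(\mu,\epsilon)$ instead of monotonicity, leg (ii) by your quantitative compression), but leg (iii) as justified would fail and should be replaced by the paper's second leg.
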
 

\begin{proof} Set $\Lambda = [\lambda,1] \subset \R$ and choose $\delta'_0 \geq \delta_0, \delta$ according to  Proposition \ref{scalescal} \ref{uno}  for this $\Lambda$. 
 We find isotopies through positive $\mathscr{Q}$-compatible metrics on $A$: 
  \begin{itemize} 
      \item from $g_{(1, \delta_0)}$ to $g_{(1, \delta'_0)}$, by the choice of $\delta_0$; 
      \item from $g_{(1, \delta'_0)}$ to $g_{(\lambda, \delta'_0)}$, by the choice of $\delta'_0$;
      \item from $g_{(  \lambda, \delta'_0 )}$ to $g_{( \lambda, \delta)}$, by the choice of $(\lambda, \delta)$ and by Proposition \ref{posproduct} \ref{two}. 
  \end{itemize}
Hence, by the choice of $(\mu, \epsilon)$,  we obtain a smooth isotopy from $g_{( 1 , \delta_0)} \tilde{\oplus} h_{(\mu, \epsilon)}$ to $g_{( \lambda, \delta)} \tilde{\oplus} h_{( \mu , \epsilon)}$ through positive $\mathscr{Q}$-compatible metrics, see Proposition \ref{posproduct} \ref{gentle}. 

In an analogous fashion we find a smooth isotopy from $g_{(\lambda, \delta)} \tilde{\oplus} h_{(1  , \epsilon_0)}$ to $g_{( \lambda, \delta)} \tilde{\oplus} h_{( \mu , \epsilon)}$ through positive $\mathscr{Q}$-compatible metrics, thus finishing the proof of Lemma \ref{isotopydifficult}. 
\end{proof} 

We obtain the following counterpart of Proposition \ref{prop:prodpos}. 

\begin{prop} \label{tor_pos} \rot{We work in Setting \ref{constrtoda}}   and assume in addition that  both of the Baas-Sullivan manifolds $A$ and $B$ are equipped with $\mathscr{Q}$-compatible positive metrics. 
Let $r \geq 2$ be such that $ra = 0 = rb$. 
Then for each element $x \in  \langle a, r, b \rangle  \subset H_{d + e +1} ( X \times Y)$ there exists $\ell  \geq 0$ such that $2^{\ell}  \cdot x$ is positive. 
\end{prop}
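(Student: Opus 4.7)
The plan is to realize the explicit representative \eqref{Toda_repr} of $2^{m+n} \cdot \langle a, r, b \rangle$ as a closed Baas-Sullivan manifold carrying a positive $\mathscr{Q}$-compatible metric, and then absorb the indeterminacy of the Toda coset using the group structure of positive homology.

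First, I would extend the positive $\mathscr{Q}$-compatible metrics $g$ on $A$ and $h$ on $B$ to (not necessarily positive) $\mathscr{Q}$-compatible metrics $\overline{g}$ on $V$ and $\overline{h}$ on $W$ by the downward induction of Lemma \ref{compat}. Choosing $\delta_0, \epsilon_0, (\lambda, \delta), (\mu, \epsilon)$ exactly as in the paragraph preceding Lemma \ref{isotopydifficult}, Proposition \ref{posproduct}\ref{one} equips $V \ttimes B$ with the positive $\mathscr{Q}$-compatible metric $G_V := \overline{g}_{(1, \delta_0)} \tilde{\oplus} h_{(\mu, \epsilon)}$ and $A \ttimes W$ with the positive $\mathscr{Q}$-compatible metric $G_W := g_{(\lambda, \delta)} \tilde{\oplus} \overline{h}_{(1, \epsilon_0)}$.

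Second, these two metrics restrict on the common boundary $\coprod_{2^m r} A \ttimes B$ to $g_{(1, \delta_0)} \tilde{\oplus} h_{(\mu, \epsilon)}$ and $g_{(\lambda, \delta)} \tilde{\oplus} h_{(1, \epsilon_0)}$, respectively, which by Lemma \ref{isotopydifficult} are isotopic through positive $\mathscr{Q}$-compatible metrics. Using the standard device of slowing down such an isotopy stratum by stratum, I obtain a positive $\mathscr{Q}$-compatible concordance on $[0,1] \times \coprod_{2^m r} A \ttimes B$ that is a Riemannian product near each end with the corresponding boundary metric. Inserting this cylinder between $(V \ttimes B, G_V)$ and $(A \ttimes W, G_W)$ yields a closed $\mathscr{Q}_n$-manifold $Z$ carrying a smooth $\mathscr{Q}$-compatible positive metric. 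By \eqref{Toda_repr} its image under $u$ equals $2^{m+n} \cdot x_0$ for some specific $x_0 \in \langle a, r, b \rangle$, so this particular element of the coset lies in $\HH^{\mathscr{Q},+}_{d+e+1}(X \times Y)$.

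Third, for an arbitrary $x \in \langle a, r, b\rangle$, the indeterminacy of the Toda bracket yields classes $c_1 \in \HH_{e+1}(Y)$ and $c_2 \in \HH_{d+1}(X)$ with $x - x_0 = a \times c_1 + c_2 \times b$. Since both $A$ and $B$ carry positive $\mathscr{Q}$-compatible metrics, Proposition \ref{prop:prodpos} shows that $2^n (a \times c_1)$ and $2^n (c_2 \times b)$ are positive; multiplying by $2^m$ and using that $\HH^{\mathscr{Q},+}_*$ is a subgroup gives $2^{m+n}(x - x_0) \in \HH^{\mathscr{Q},+}_{d+e+1}(X \times Y)$. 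Combined with the previous step, $2^{m+n} \cdot x$ is positive, so $\ell := m+n$ satisfies the conclusion.

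The main obstacle is the metric gluing in the second step: producing a globally smooth $\mathscr{Q}$-compatible metric that has positive scalar curvature on every stratum of $Z$, given only that the boundary metrics on the two pieces differ. This difficulty is precisely what Lemma \ref{isotopydifficult} was engineered to resolve, reducing the obstruction to an isotopy within positive $\mathscr{Q}$-compatible metrics, which then converts into a collared positive concordance by a standard slow-down.
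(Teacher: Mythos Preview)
Your argument follows the paper's proof almost verbatim, but there is a small gap in the third step. When you write $x - x_0 = a \times c_1 + c_2 \times b$ and invoke Proposition~\ref{prop:prodpos} to conclude that $2^n(a \times c_1)$ and $2^n(c_2 \times b)$ are positive, you are implicitly assuming that $c_1$ and $c_2$ are themselves images under $u$ of Baas--Sullivan bordism classes, since Proposition~\ref{prop:prodpos} is stated in Setting~\ref{constrtoda}. For arbitrary spaces $X$ and $Y$ the map $u$ is only an isomorphism after inverting $2$ (Proposition~\ref{isobordhom}), so $c_1 \in \HH_{e+1}(Y)$ need not lie in the image of $u$; one only knows that $2^k c_1$ does for some $k \geq 0$. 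Consequently your specific value $\ell = m+n$ is not justified in general.

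The fix is precisely what the paper does: invoke Proposition~\ref{isobordhom} together with Proposition~\ref{prop:prodpos} to obtain some $\ell \geq 0$ with $2^\ell(x - x')$ positive, without claiming control over the size of $\ell$. With this adjustment your proof coincides with the paper's.
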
 

\begin{proof} Using the notation introduced after Proposition \ref{prop:prodpos} the $\mathscr{Q}_n$-manifold  
\[
  (V  \ttimes  B)  \cup_{\partial_0  V \ttimes  B  = A \ttimes \partial_0 W} ( A  \ttimes  W )
\]
 in \eqref{Toda_repr}  carries a $\mathscr{Q}$-compatible positive metric by Lemma \ref{isotopydifficult}.
Hence   the class $x' \in 2^{m+ n}   \cdot  \langle a, r, b\rangle$ represented by $ (V  \ttimes  B)  \cup ( A  \ttimes  W )  \to X \times Y$ is positive.

\rot{It is enough to show Proposition \ref{tor_pos} for $x \in 2^{m+n} \cdot \langle a ,r ,b \rangle$.} 
Given such $x$ we have $x - x' \in  \big( a \times \HH_{e+1}(Y) \big) \oplus \big( \HH_{d+1}(X) \times b \big) \subset \HH_*(X \times Y)$, and 
by Propositions \ref{isobordhom} and \ref{prop:prodpos} and  since $a$ and $b$ are positive there exists $\ell \geq 0$ such that $2^{\ell} \cdot (x - x')$ is positive. 
Using that $x'$ is positive we conclude that $2^{\ell} \cdot x$ is positive. 
\end{proof} 

Let $\Gamma_1$ and $\Gamma_2$ be finite groups of odd order and set $X = B \Gamma_1$ and $Y = B \Gamma_2$. 
Then  each $x  \in \tilde \HH_*(X \times Y)$  is of odd order and hence for all $m_0 \geq 0$ there exists $m \geq m_0$ with $2^m \cdot x = x$. 
By  Corollary  \ref{iso_odd_order} and Propositions \ref{prop:prodpos} and \ref{tor_pos} we conclude: 

\begin{cor} \label{summary} Let $a \in H_{d}(B \Gamma_1)$ and $b \in \HH_e(B \Gamma_2)$ where $d, e \geq 0$. 
\begin{enumerate}[label={(\roman*)}]
    \item \label{oans} If either $a$ or $b$ is positive, then $a \times b$ is positive. 
    \item \label{zwoa} Let $r \geq 2$ with $ra = 0 = rb$ and let $a$ and $b$ be positive.
           Then $\langle a, r, b\rangle \subset \HH_{d+e+1}(B  \Gamma_1 \times B\Gamma_2 )$ only contains positive classes. 
\end{enumerate} 
\end{cor}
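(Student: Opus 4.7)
The plan is to combine the ``positive up to a power of two'' statements in Propositions \ref{prop:prodpos} and \ref{tor_pos} with the number-theoretic fact that $\tilde\HH_*(B\Gamma_i;\Z)$ is torsion of odd order (since $\Gamma_i$ is finite of odd order), so that multiplication by any power of $2$ induces an automorphism of $\tilde\HH_*(B\Gamma_1 \times B\Gamma_2;\Z)$ via the K\"unneth formula. Crucially, by Definition \ref{poshom} the positive classes form a subgroup, hence in particular are closed under taking arbitrary integer multiples.

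For part \ref{oans}, I would first apply Corollary \ref{iso_odd_order} to lift $a$ and $b$ to bordism classes $[\alpha:A\to B\Gamma_1]$ and $[\beta:B\to B\Gamma_2]$ in $\Omega_*^{\SO,\mathscr{Q}_n}$ for a common $n$; by Definition \ref{poshom} applied to the positive factor (say $a$), the representative $A$ may be chosen to carry a $\mathscr{Q}$-compatible positive metric. Proposition \ref{prop:prodpos} then yields that $2^n\cdot(a\times b)$ is positive. If $d=e=0$ the claim is immediate (a disjoint union of points is a positive $\mathscr{Q}$-manifold), so we may assume $d+e\geq 1$; then $a\times b$ lies in an odd-order abelian group, say of order $N$. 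Choosing $k\in\Z$ with $k\cdot 2^n \equiv 1 \pmod{N}$, we get
\[
   a\times b = k\cdot\bigl(2^n(a\times b)\bigr),
\]
which, as an integer multiple of a positive class, is positive.

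Part \ref{zwoa} proceeds in exactly the same spirit, with Proposition \ref{tor_pos} in place of Proposition \ref{prop:prodpos}. For an arbitrary $x\in\langle a,r,b\rangle \subset \HH_{d+e+1}(B\Gamma_1\times B\Gamma_2;\Z)$, Proposition \ref{tor_pos} produces an $\ell\geq 0$ with $2^\ell x$ positive; since $d+e+1\geq 1$, the class $x$ has odd order and the same inversion trick exhibits $x$ itself as an integer multiple of $2^\ell x$, hence positive.

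I expect no genuine obstacle here; the corollary is essentially a bookkeeping assembly of earlier work. The only point that requires verification is that the representatives $A$ and $B$ furnished by Corollary \ref{iso_odd_order} can be arranged to satisfy the compatibility-and-positivity hypotheses of Propositions \ref{prop:prodpos} and \ref{tor_pos} (rather than some other representatives of the same bordism classes); but this is immediate from the very definition of positivity in Definition \ref{poshom}, together with the freedom to enlarge $n$ so that the two representatives live in the same singularity category $\mathscr{Q}_n$.
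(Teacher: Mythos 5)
Your proposal is correct and follows essentially the same route as the paper: the paper likewise combines Propositions \ref{prop:prodpos} and \ref{tor_pos} with the observation that every class in $\tilde\HH_*(B\Gamma_1\times B\Gamma_2;\Z)$ has odd order (phrased there as finding $m$ with $2^m x = x$, which is the same $2$-inversion trick as your choice of $k$ with $k\cdot 2^n\equiv 1$), together with positivity being a subgroup. The only cosmetic quibble is your parenthetical in the $d=e=0$ case --- a $0$-manifold has identically vanishing scalar curvature, so it is not literally a positive $\mathscr{Q}$-manifold --- but that edge case is trivially harmless either way, so the argument stands.
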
 
This result will be crucial for the computations in the next sections.

 \begin{rem} \rot{One can show that the product $\ttimes$ on (relative) bordism groups $\Omega^{\SO, \mathscr{P}_n}_*$ considered in Proposition \ref{prop:admissible} is graded commutative and associative.
Corollary \ref{summary}, which is sufficient for the remainder of our paper, does not depend on these facts.} 
\end{rem}

\section{Homology of abelian groups}      \label{hom_p_1} 

Let $p$ be an odd prime.  
Given an integer $\alpha \geq 1$ we denote by $G_\alpha$ the cyclic group of order $p^{\alpha}$  with generator $g_\alpha$ and neutral element  $1_\alpha$. 
The group operation in $G_\alpha$ is written multiplicatively. 
We denote by $\Z G_{\alpha}$ the integral group ring of $G_\alpha$. 

Let $ (  C(\alpha)_*, \partial_*) $ be  the $\Z$-graded $\Z$-free chain complex with one generator $c_d$ in each degree $d \geq 0$ and differential 
\[ 
     \partial( c_d) = \begin{cases} p^{\alpha} \cdot c_{d-1} & \text{ for even } d \geq 2\, ,  \\
                                                                0 & \textrm{ for odd } d  \text{ and for } d = 0\, . 
                                                                 
                              \end{cases} 
\]
This is the cellular chain complex with integer coefficients of the standard CW-model of the classifying space $B G_{\alpha}$  with one cell in each non-negative dimension. 
We hence recover the well known computation (see \cite{Brown}*{(II.3.1)})  
\[              
   \HH_{d} (C(\alpha)_*, \partial_*) \cong \HH_{d} ( B G_{\alpha} ) = \begin{cases} \Z = \langle [c_0] \rangle  &  \text{ for } d = 0 \, , \\
                       \Z/p^\alpha = \langle  [c_d] \rangle  & \text{ for  odd } d  \, , \\
                  0 & \text{ for even } d \geq 2\, .  \end{cases}
 \]
For $n \geq 1$ and $1 \leq \alpha_1 \leq \cdots \leq \alpha_n$ we consider the abelian $p$-group 
\[
   \Gamma = G_{\alpha_1}  \times \cdots \times G_{\alpha_n}  
 \]
and obtain
\[
   \HH_*( B\Gamma ) \cong \HH_*(C^{(1)}_* \otimes \cdots \otimes C^{(n)}_* ) , 
\]
where $C^{(i)}_* = C(\alpha_i)_*$, $i = 1, \ldots, n$, refers to the $i$th cyclic factor in the group $\Gamma$. 

Sometimes we will work with  the reduced chain complex $\tilde C(\alpha)_* := C(\alpha)_* / \langle c_0 \rangle$ of $C(\alpha)_*$. 
Note the canonical direct sum decomposition $C(\alpha)_* = \tilde C(\alpha)_* \oplus \langle c_0 \rangle$ of chain complexes and the isomorphism $\tilde \HH_*(\widehat{B\Gamma}) \cong \HH_*(\tilde C(\alpha_1)_* \otimes \cdots \otimes \tilde C(\alpha_n)_*)$, where $\widehat{B\Gamma} = BG_{\alpha_1} \wedge \cdots \wedge BG_{\alpha_n}$ is the smash product of pointed classifying spaces.
In general we obtain a direct sum decomposition of chain complexes
\begin{equation}  \label{dirsum} 
    C(\alpha_1)_* \otimes \cdots \otimes C(\alpha_n)_* =  \bigoplus_{\substack{0 \leq k \leq n \\ 1 \leq i_1< \cdots <  i_k \leq n }}       \tilde C(\alpha_{i_1})_*  \otimes \cdots \otimes \tilde C(\alpha_{i_k})_* \, ,  
\end{equation} 
where the summand for $k = 0$ is equal to $\langle c_0 \otimes \cdots \otimes c_0 \rangle \subset C(\alpha_1)_* \otimes \cdots \otimes C(\alpha_n)_*$, by definition. 
For analyzing $\HH_*( C(\alpha_1)_* \otimes \cdots \otimes C(\alpha_n)_*)$ it is hence important to provide convenient generators of $\HH_*(  \tilde C(\alpha_{i_1})_*  \otimes \cdots \otimes \tilde C(\alpha_{i_k})_* )$, for $1 \leq k \leq n$ and $1 \leq i_1 < \ldots < i_k \leq n$.  
In Proposition \ref{generated} we will do this for $\HH_*(\tilde C(\alpha_1)_* \otimes \cdots \otimes \tilde C(\alpha_n)_* )$, the other cases are analogous. 
We first we write down cycle representatives of iterated Toda brackets.

\begin{constr} \label{deftor} Let $k \geq 1$, let $1 \leq \beta_1 \leq \cdots \leq \beta_k$ and let $m_1, \ldots, m_k$ be positive integers. 
We define a cycle in $\tilde C(\beta_1)_* \otimes \cdots \otimes \tilde C(\beta_k)_*$  of degree $2m_{1} + \cdots + 2m_{k} - 1$ by 
\begin{equation*}
\label{tau} 
   \mathscr{T}   (c^{(1)}_{2m_{1} -1}  , \ldots, c^{(k)}_{2m_{k}-1}) : = \frac{1}{p^{\beta_{1}}}  \partial ( c^{(1)}_{2m_{1}} \otimes \cdots \otimes c^{(k)}_{2m_{k} }) = \sum_{\ell=1}^k p^{\beta_{\ell} - \beta_{1}} \cdot ( c^{(1)}_{2m_{1} } \otimes \cdots \otimes c^{(\ell)}_{2m_{\ell}-1} \otimes \cdots \otimes c^{(k)}_{2m_{k}} ) \,  .  
\end{equation*} 
Clearly the corresponding homology class satisfies $p^{\beta_{1}} \cdot  [ \mathscr{T}  (c^{(1)}_{2m_{1} -1}  , \ldots, c^{(k)}_{2m_{k}-1})]  = 0$. 
For $k=1$ we have  $\mathscr{T}  (c^{(1)}_{2m_{1}-1}) = c^{(1)}_{2m_{1}-1}$, and for $k \geq 2$ we obtain {\em iterated Toda brackets}. 
More precisely, setting $h_i := [c^{(i)}_{2m_i-1}] \in \HH_{2m_{i}-1}(\tilde C(\beta_i)_*) $ for $i = 1, \ldots, k$,  we obtain
\[
   [ \mathscr{T}  (c^{(1)}_{2m_{1}-1}, \ldots, c^{(k)}_{2m_{k}-1})] \in   \langle  h_1  , p^{\beta_1} ,   p^{\beta_2-\beta_1} \langle h_2 , p^{\beta_2}, \cdots \langle h_{k-1} , p^{\beta_{k-1}} , p^{\beta_k - \beta_{k-1}} h_k \rangle \cdots \rangle \rangle \, . 
\]
\end{constr} 

We can now construct specific generators of $\HH_*(\tilde C(\alpha_1)_* \otimes \cdots \otimes \tilde C(\alpha_n)_* )$. 
Let $1 \leq j \leq n$, let $1 \leq i_1 < \cdots < i_j \leq n$ and let $m_{1}, \ldots, m_{j}$ be positive integers. 
Let $(s_1, \ldots, s_{n-j})$ with $1 \leq s_1 < \cdots < s_{n-j} \leq n$ be the unique family complementary to $(i_1, \ldots, i_j)$ (this family is empty for $j = n$) and let $d_1, \ldots, d_{n-j}$ be further positive integers.  
Suppressing  a signed permutation of tensor factors we obtain a cycle
\[
   \mathscr{T}  (c_{2m_{1}-1}^{(i_1)} , \ldots, c_{2m_{j}-1}^{(i_j)}) \otimes c_{2d_1-1}^{(s_1)} \otimes \cdots \otimes c_{2d_{n-j}-1}^{(s_{n-j})} \in \tilde C_*^{(1)} \otimes \cdots \otimes \tilde C_*^{(n)} \subset C(\alpha_1)_* \otimes \cdots \otimes C(\alpha_n)_*\, . 
\]
In the following we will call cycles of this sort  {\em special}. 

\begin{prop} \label{generated} $ \HH_*(\tilde C(\alpha_1)_* \otimes \cdots \otimes \tilde C(\alpha_n)_* )$ is generated by special cycles with $i_1 = 1$.
\end{prop}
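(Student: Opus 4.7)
The plan is to proceed by induction on $n$. For the base case $n = 1$, the group $\HH_*(\tilde C(\alpha_1)_*)$ is cyclic of order $p^{\alpha_1}$ in each odd positive degree and generated by $[c^{(1)}_{2m_1-1}]$, which is tautologically a special cycle with $j = 1$ and $i_1 = 1$.

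For the inductive step, I would apply the K\"unneth short exact sequence to the splitting $\tilde C(\alpha_1)_* \otimes \tilde C_{[2,n]}$, where $\tilde C_{[2,n]} := \tilde C(\alpha_2)_* \otimes \cdots \otimes \tilde C(\alpha_n)_*$. By the inductive hypothesis, every class in $\HH_*(\tilde C_{[2,n]})$ is an integral combination of classes represented by special cycles $z = \mathscr{T}(c^{(i_1')}_{2m_1'-1}, \ldots, c^{(i_{j'}')}_{2m_{j'}'-1}) \otimes c^{(s_1)}_{2d_1-1} \otimes \cdots$ with $i_1' = 2$. The K\"unneth decomposition contributes two kinds of classes: tensor classes $[c^{(1)}_{2m_1-1}] \otimes [z]$ and Tor classes $\langle [c^{(1)}_{2m_1-1}], p^{\alpha_1}, p^{\alpha_{i_1'}-\alpha_1}[z]\rangle$. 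The Tor class admits the chain representative $\tfrac{1}{p^{\alpha_1}} \partial\bigl(c^{(1)}_{2m_1} \otimes c^{(i_1')}_{2m_1'} \otimes \cdots \otimes c^{(i_{j'}')}_{2m_{j'}'} \otimes (\mathrm{rest})\bigr)$, which by Construction~\ref{deftor} equals $\pm \mathscr{T}(c^{(1)}_{2m_1-1}, c^{(i_1')}_{2m_1'-1}, \ldots, c^{(i_{j'}')}_{2m_{j'}'-1}) \otimes (\mathrm{rest})$, a special cycle with $i_1 = 1$ and $j = j' + 1$. The tensor class is equally straightforward when $j' = 1$: in that case $z = c^{(i_1')}_{2m_1'-1} \otimes (\mathrm{rest})$ and $c^{(1)}_{2m_1-1} \otimes z$ is a special cycle with $j = 1$ and $i_1 = 1$.

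The essential case is that of a tensor class with $j' \geq 2$, where $c^{(1)}_{2m_1-1} \otimes z$ is a priori only a special cycle whose Toda bracket begins at $i_1' \geq 2$. Here the plan is to verify the chain-level identity
\begin{equation*}
c^{(1)}_{2m_1-1} \otimes \mathscr{T}(c^{(i_1')}_{2m_1'-1}, \ldots, c^{(i_{j'}')}_{2m_{j'}'-1}) = \sum_{k=1}^{j'} \epsilon_k\, p^{\alpha_{i_k'}-\alpha_{i_1'}}\, \mathscr{T}(c^{(1)}_{2m_1-1}, \ldots, \widehat{c^{(i_k')}_{2m_k'-1}}, \ldots, c^{(i_{j'}')}_{2m_{j'}'-1}) \otimes c^{(i_k')}_{2m_k'-1},
\end{equation*}
with signs $\epsilon_k \in \{\pm 1\}$ arising from Koszul permutation into ascending tensor order. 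Expanding each Toda bracket on the right via Construction~\ref{deftor}, the \emph{diagonal} summand in which $c^{(i_k')}$ appears in its odd-degree form matches the $\ell = k$ contribution of the left-hand side, while the \emph{off-diagonal} summands for an index pair $(k,\ell)$ and its mirror $(\ell,k)$ carry identical $p$-power coefficients $p^{\alpha_{i_k'}+\alpha_{i_\ell'}-\alpha_1-\alpha_{i_1'}}$ and opposite Koszul signs, hence cancel pairwise. Tensoring the identity with the complementary odd-degree factors exhibits $[c^{(1)}_{2m_1-1}] \otimes [z]$ as a $\Z$-linear combination of special cycles with $i_1 = 1$, completing the induction.

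The main obstacle is the sign bookkeeping in the cancellation of the off-diagonal terms: the Koszul signs depend on the relative positions of the odd-degree factors being transposed, and one must confirm that the pairs $(k,\ell)$ and $(\ell,k)$ indeed carry opposite signs so that the cross contributions vanish exactly, which can be checked by a direct (if somewhat tedious) combinatorial verification.
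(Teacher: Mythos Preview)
Your approach differs from the paper's in one key choice: you split off the \emph{first} factor $\tilde C(\alpha_1)_*$ in the K\"unneth sequence, whereas the paper splits off the \emph{last} factor $\tilde C(\alpha_{n+1})_*$. This seemingly cosmetic choice has real consequences. In the paper's version, the tensor part $\HH_*(\mathscr{C}^n_*) \otimes \HH_*(\tilde C^{(n+1)}_*)$ is handled trivially --- a special cycle with $i_1 = 1$ tensored with $c^{(n+1)}_{2d-1}$ is immediately again a special cycle with $i_1 = 1$ --- so no chain-level identity is needed. For the Tor part, the paper uses that by induction $\HH_*(\mathscr{C}^n_*)$ is generated by classes of order dividing $p^{\alpha_1}$, hence is entirely $p^{\alpha_1}$-torsion; since $\alpha_1 \leq \alpha_{n+1}$ this gives $\Tor(\HH_*(\mathscr{C}^n_*), \Z/p^{\alpha_{n+1}}) \cong \HH_*(\mathscr{C}^n_*)$, and the Toda brackets $\langle [z], p^{\alpha_1}, p^{\alpha_{n+1}-\alpha_1}[c^{(n+1)}]\rangle$ then visibly surject onto Tor.

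Your route is more laborious but essentially sound. The chain-level identity you propose does hold (with all $\epsilon_k = +1$, as one checks by your diagonal/off-diagonal analysis; the off-diagonal pair $(k,\ell)$ with $k<\ell$ indeed picks up opposite Koszul signs because $c^{(i_k')}_{2m_k'-1}$ must pass the odd-degree factor $c^{(i_\ell')}_{2m_\ell'-1}$ in one case but not the other). However, there is a genuine gap in your treatment of the Tor part. You assert that the classes $\langle [c^{(1)}_{2m_1-1}], p^{\alpha_1}, p^{\alpha_2-\alpha_1}[z]\rangle$ (as $[z]$ ranges over special generators of $\HH_*(\tilde C_{[2,n]})$) map onto generators of $\Tor(\Z/p^{\alpha_1}, \HH_*(\tilde C_{[2,n]})) \cong \HH_*(\tilde C_{[2,n]})[p^{\alpha_1}]$. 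This amounts to claiming that $p^{\alpha_2-\alpha_1}\cdot \HH_*(\tilde C_{[2,n]}) = \HH_*(\tilde C_{[2,n]})[p^{\alpha_1}]$, which is \emph{not} automatic from knowing generators of $\HH_*(\tilde C_{[2,n]})$: if this group had a direct summand $\Z/p^e$ with $e < \alpha_2$, multiplication by $p^{\alpha_2-\alpha_1}$ could kill it while its $p^{\alpha_1}$-torsion survives. You can fill this gap by proving (by a straightforward K\"unneth induction) that $\HH_*(\tilde C(\alpha_2)_* \otimes \cdots \otimes \tilde C(\alpha_n)_*)$ is a \emph{free} $\Z/p^{\alpha_2}$-module in each degree, whence the required equality follows. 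With this lemma added, your argument goes through; but note that the paper's choice of splitting off the last factor sidesteps both this issue and your chain-level identity entirely.
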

\begin{proof} We apply induction on $n$. 
In the induction step we set $\mathscr{C}_*^n  :=  \tilde C_*^{(1)} \otimes \cdots \otimes \tilde C_*^{(n)}$ and consider the exact K\"unneth sequence 
\[ 
   0 \longrightarrow  \HH_*(\mathscr{C}_*^n ) \otimes \HH_*(\tilde C_*^{(n+1)})  \longrightarrow \HH_*(\mathscr{C}_*^n \otimes \tilde C_*^{(n+1)} )  \longrightarrow \Tor( \HH_*(\mathscr{C}^n_*) , \HH_*(\tilde C_*^{(n+1)}))_{* -1}  \longrightarrow 0 . 
\]
By the induction hypothesis, the construction of  $ \Tor( \HH_*(\mathscr{C}_*^n) , \HH_*(\tilde C_*^{(n+1)}))$ and the assumption that  $\alpha_1  \leq \alpha_{n+1}$, Toda brackets of the form 
\[
   \langle    [ \mathscr{T}  (c^{(1)}, \ldots, c^{(i_j)}) \otimes c^{(s_1)} \otimes \cdots \otimes c^{(s_{n -j})} ] , p^{\alpha_1}, p^{\alpha_{n+1}  - \alpha_1} [c^{(n+1)} ] \rangle 
\]
map to a generating set of $ \Tor( \HH_*(\mathscr{C}_*^n) , \HH_*(\tilde C_*^{(n+1)}))$.

This Toda bracket contains $ [ \mathscr{T}  (c^{(1)}, \ldots, c^{(i_j)}, c^{(n+1)}) \otimes c^{(s_1)} \otimes \cdots \otimes c^{(s_{n -j})} ] $ (up to sign), and hence special cycles  $  \mathscr{T}  (c^{(1)}, \ldots, c^{(i_j)}, c^{(n+1)}) \otimes c^{(s_1)} \otimes \cdots \otimes c^{(s_{n -j})} $ map to a generating set of  $ \Tor( \HH_*(\mathscr{C}_*^n) , \HH_*(\tilde C_*^{(n+1)}))$. 

The image of the left-hand map in the K\"unneth sequence satisfies the claim by the induction assumption. 
\end{proof}   
 
\begin{ex} Let $n = 3$ and $\alpha_1 = 1$, $\alpha_2 = 2$ and $\alpha_3 = 3$. 
Then 
\[
  0 \neq [ \mathscr{T}  (c^{(2)}_{1}, c^{(3)}_{1}) \otimes c^{(1)}_{1 } ] \in \HH_4(B\Gamma) \, .
\]
Proposition \ref{generated} can be illustrated in this case by computing $  \mathscr{T}   (c^{(2)}_{1}, c^{(3)}_{1}) \otimes c^{(1)}_{1 }$ as 
\[
   - \mathscr{T}   (c^{(1)}_{1},  c^{(3)}_{1}) \otimes c^{(2)}_{1 }
     -  p \cdot   \mathscr{T}   (c^{(1)}_{1}, c^{(2)}_{1}) \otimes c^{(3)}_{1 }  = - \mathscr{T}  (c^{(1)}_{1},  c^{(3)}_{1}) \otimes c^{(2)}_{1 }
           - \partial ( c^{(1)}_{2} \otimes c^{(2)}_{2} \otimes c^{(3)}_{1} ) \, . 
\] 
\end{ex} 

Next we will derive some explicit formulas for maps in group homology induced by group homomorphisms. 
We consider the homological chain complex in non-negative degrees
\[
    (F(\alpha)_*, \partial_*) :=  \big( \cdots  \longrightarrow \Z G_\alpha   \stackrel{ \nu_\alpha}{\longrightarrow}        \Z G_\alpha  \stackrel{\tau_\alpha}{\longrightarrow}      \Z G_\alpha \stackrel{ \nu_\alpha}{\longrightarrow}  \Z G_\alpha  \stackrel{\tau_\alpha}{\longrightarrow}  \Z G_{\alpha}   \big) 
\]
where the differentials are given by multiplication with $\tau_\alpha:= g_\alpha -1_{\alpha}$ and  $\nu_\alpha:= \sum_{i=0}^{p^{\alpha} -1} (g_\alpha)^i $, respectively. 
With the augmentation map $\varepsilon_{\alpha}: \Z G_{\alpha} \to \Z$ induced by the group homomorphism $G_{\alpha} \to \{1\}$  we obtain an exact sequence
\[
   \cdots  \longrightarrow \Z G_\alpha   \stackrel{\nu_\alpha}{\longrightarrow}        \Z G_\alpha  \stackrel{\tau_\alpha}{\longrightarrow}      \Z G_\alpha \stackrel{ \nu_\alpha}{\longrightarrow}  \Z G_\alpha  \stackrel{\tau_\alpha}{\longrightarrow}  \Z G_{\alpha}   \stackrel{\varepsilon_\alpha}{\longrightarrow} \Z \to 0 \, . 
\]
In other words $(F(\alpha)_*, \partial_*)$ is a $\Z G_{\alpha}$-free resolution of the $\Z G_{\alpha}$-module $\Z$, see \cite{Brown}*{(I.6.3)}. 
Note the canonical isomorphism of chain complexes $C(\alpha)_*  =    F(\alpha)_* \otimes_{\Z G_\alpha} \Z$. 

Let $\alpha, \beta, \lambda \in \N_{>0}$ with  $p^{\beta} \mid  \lambda \cdot p^{\alpha}$, and consider the group homomorphism
\[
   \phi: G_{\alpha}   \to G_{\beta}   \, , \quad g_{\alpha}    \mapsto ( g_{\beta})^\lambda \, . 
\]
Then each $\Z G_{\beta}$-module can be regarded as a $\Z G_{\alpha}$-module via  the ring map 
\[
   \Z \phi : \Z G_{\alpha} \to \Z G_{\beta} .
\] 
With this convention the assignments (using $\lambda \cdot p^{\alpha - \beta} \in \N_{\rot{>0}}$) 
\begin{eqnarray*}
    \phi_{2m}(1_\alpha) & := & (  \lambda \cdot p^{\alpha - \beta})^m \cdot 1_\beta \, , \\
    \phi_{2m-1}(1_\alpha) & := & ( \lambda \cdot p^{\alpha - \beta})^{m-1} \cdot \sum_{i = 0}^{\lambda-1} (g_{\beta})^i 
\end{eqnarray*}
uniquely extend  to  $\Z G_{\alpha}$-linear maps $\Z G_{\alpha} \to \Z G_{\beta}$ and an explicit  computation 
\begin{align*} \phi_{2m-1} (\nu_\alpha \cdot 1_{\alpha}) & =  (\lambda p^{\alpha-\beta})^{m-1} \cdot \sum_{i=0}^{p^{\alpha}-1} (g_{\beta})^{i \lambda} \cdot \sum_{j=0}^{\lambda-1} (g_{\beta})^{j}  \\
                                                                                        & = (\lambda p^{\alpha - \beta})^m \cdot \sum_{i=0}^{p^{\beta}-1} (g_{\beta})^i =   \nu_{\beta} \cdot \phi_{2m} (1_{\alpha}), 
\end{align*} 
and similar to obtain $\phi_{2m} (\tau_\alpha \cdot 1_\alpha) = \tau_\beta \cdot \phi_{2m+1} (1_\alpha)$,  shows  that we obtain an augmentation preserving  map of $\Z G_{\alpha}$-linear chain complexes 
\[
    \xymatrix{
\cdots \ar[r] &   \Z G_\alpha   \ar[d]^{\phi_4}  \ar[r]^{\nu_\alpha}    &      \Z G_\alpha \ar[d]^{\phi_3} \ar[r]^{\tau_\alpha}   &          \Z G_\alpha \ar[d]^{\phi_2}  \ar[r]^{\nu_\alpha }    &  \Z G_\alpha \ar[d]^{\phi_1 } \ar[r]^{\tau_\alpha}    & \Z G_\alpha \ar[d]^{\phi_0 = \Z \phi }   \\
\cdots \ar[r] &     \Z G_\beta                    \ar[r]^{\nu_\beta}     &     \Z G_\beta                       \ar[r]^{\tau_\beta}   &           \Z G_\beta                       \ar[r]^{\nu_\beta}    &  \Z G_\beta                          \ar[r]^{\tau_\beta}    &  \Z G_\beta                  
      } 
\]
After applying the functor $- \otimes_{\Z G_\alpha} \Z$  we obtain the following result. 

\begin{prop} \label{compute} 
The induced chain map $\phi_*: C(\alpha)_* \to C(\beta)_*$ is given by
\begin{eqnarray*} \label{homeins} 
   \phi_{2m}(c_{2m})  =    & (\lambda \cdot p^{\alpha-\beta})^m \cdot c_{2m}   &  \text{ for } m \geq 0  \, , \\
   \phi_{2m-1}(c_{2m-1})  = &    \lambda \cdot (\lambda \cdot p^{\alpha- \beta})^{m-1} \cdot c_{2m-1} &   \text{ for } m \geq 1\,  . 
\end{eqnarray*} 
\end{prop}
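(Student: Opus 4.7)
The plan is to apply the functor $-\otimes_{\Z G_\alpha}\Z$ to the explicit $\Z G_\alpha$-linear chain lift $\phi_*: F(\alpha)_* \to F(\beta)_*$ constructed immediately before the proposition, and trace what happens to the generator $1_\alpha \in \Z G_\alpha = F(\alpha)_k$ in each degree. The key point is to identify the target with $C(\beta)_*$ in a computable way.

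First, recall that $C(\alpha)_* = F(\alpha)_* \otimes_{\Z G_\alpha} \Z$, under which the generator $c_k$ corresponds to $1_\alpha \otimes 1$. Since $\Z G_\beta$ is a $\Z G_\alpha$-module via $\Z\phi$, the induced map $\phi_k \otimes \id_\Z$ lands in $F(\beta)_k \otimes_{\Z G_\alpha} \Z$. Crucially, the $\Z G_\alpha$-action on $\Z$ factors through the trivial action, i.e.\ through the composite $\Z G_\alpha \xrightarrow{\Z\phi} \Z G_\beta \xrightarrow{\varepsilon_\beta} \Z$, so the canonical quotient map furnishes an isomorphism
\[
   F(\beta)_k \otimes_{\Z G_\alpha} \Z \; \xrightarrow{\cong} \; F(\beta)_k \otimes_{\Z G_\beta} \Z = C(\beta)_k,
\]
under which an element $x \in F(\beta)_k = \Z G_\beta$ is sent to $\varepsilon_\beta(x) \cdot c_k$. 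Thus the induced chain map $\phi_*: C(\alpha)_* \to C(\beta)_*$ sends $c_k$ to $\varepsilon_\beta(\phi_k(1_\alpha)) \cdot c_k$.

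It then suffices to augment the explicit formulas given for $\phi_k(1_\alpha)$. For $k = 2m$ one reads off $\varepsilon_\beta\big((\lambda \cdot p^{\alpha-\beta})^m \cdot 1_\beta\big) = (\lambda \cdot p^{\alpha-\beta})^m$, which is the first claimed formula. For $k = 2m-1$ we use $\varepsilon_\beta\big(\sum_{i=0}^{\lambda-1} (g_\beta)^i\big) = \lambda$ to obtain $\varepsilon_\beta(\phi_{2m-1}(1_\alpha)) = \lambda \cdot (\lambda \cdot p^{\alpha-\beta})^{m-1}$, the second formula. The nontrivial work (verifying that $\phi_*$ is a chain map, which requires the divisibility hypothesis $p^\beta \mid \lambda p^\alpha$ in order that $(\lambda p^{\alpha-\beta})^m$ makes sense as an integer) has already been carried out in the text, so no real obstacle remains — the proposition reduces to the two augmentation computations above.
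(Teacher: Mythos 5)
Your route is essentially the paper's: push the explicit $\Z G_\alpha$-linear lift $\phi_*\colon F(\alpha)_*\to F(\beta)_*$ through coinvariants and read off the coefficient via the augmentation $\varepsilon_\beta$; the two augmentation computations at the end are correct. However, one intermediate assertion is false: the canonical map $F(\beta)_k \otimes_{\Z G_\alpha} \Z \to F(\beta)_k \otimes_{\Z G_\beta}\Z$ is in general \emph{not} an isomorphism. Indeed $\Z G_\beta \otimes_{\Z G_\alpha}\Z$ is the quotient of $\Z G_\beta$ by the ideal generated by $(g_\beta)^\lambda - 1_\beta$, i.e. $\Z\big[G_\beta/\phi(G_\alpha)\big]$, which is free of rank $\gcd(\lambda,p^\beta)$ over $\Z$; so your map is an isomorphism only when $p \nmid \lambda$. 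The hypothesis $p^\beta \mid \lambda p^\alpha$ allows $p \mid \lambda$ (e.g. $\alpha=\beta$, $\lambda=p$), and precisely such $\lambda$ occur in the later applications of the proposition (Proposition \ref{calculate}, Lemma \ref{drei}), so the claimed identification fails in the cases the paper actually needs.

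Fortunately you never need injectivity. By \cite{Brown}*{(II.6.1)} (the reference given right after the proposition), the map induced by $\phi$ on group homology is computed from any $\phi$-equivariant, augmentation-preserving chain map of resolutions by passing to coinvariants on each side, i.e. it is exactly your composite $C(\alpha)_* = F(\alpha)_*\otimes_{\Z G_\alpha}\Z \to F(\beta)_*\otimes_{\Z G_\alpha}\Z \to F(\beta)_*\otimes_{\Z G_\beta}\Z = C(\beta)_*$, where the second arrow is merely the canonical quotient map and does send $x\otimes 1$ to $\varepsilon_\beta(x)\cdot c_k$. With the word ``isomorphism'' deleted and this standard fact invoked instead, your computation $c_k \mapsto \varepsilon_\beta(\phi_k(1_\alpha))\cdot c_k$, and hence both stated formulas, go through unchanged.
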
 

Note that the map induced in homology  by $\phi_*$ can be identified with the map 
\[
   (B \phi)_* :   \HH_*(B G_{\alpha}) \to  \HH_*(BG_{\beta}) \, ,
\]
compare \cite{Brown}*{(II.6.1)}. 

\begin{lem} \label{diag} 
Consider the diagonal map $ \Delta : G_{\alpha}   \to G_{\alpha} \times G_{\alpha} $, $g \mapsto (g,g)$. 
Then the induced map in homology $\Delta_* : \HH_*( C(\alpha)_* ) \to \HH_*(C(\alpha)_* \otimes C(\alpha)_*)$ satisfies
\[
    \Delta_*( [c_{2m + 1}] ) =    \left[  \sum_{i=0}^{2m+1}  c_i \otimes c_{2m+1-i}  \right]  \, . 
\]
\end{lem}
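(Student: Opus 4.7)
The plan is to lift the group-theoretic diagonal to a $\Z G_\alpha$-equivariant chain map of resolutions. Writing $F = F(\alpha)$, the complex $F\otimes F$ with the diagonal $\Z G_\alpha$-action is $\Z G_\alpha$-free in every degree and acyclic in positive degrees, so there is a $\Z G_\alpha$-linear chain map $\bar\Delta : F \to F \otimes F$ lifting $\id_\Z$, unique up to $\Z G_\alpha$-chain homotopy. The induced map $\Delta_\ast$ on homology is then computed by the composite
\[
    C(\alpha) = F\otimes_{\Z G_\alpha}\Z \xrightarrow{\bar\Delta\,\otimes\,\id_\Z}(F\otimes F)\otimes_{\Z G_\alpha}\Z \twoheadrightarrow (F\otimes F)\otimes_{\Z[G_\alpha\times G_\alpha]}\Z = C(\alpha)\otimes C(\alpha),
\]
where the last map is the canonical quotient.

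I propose to verify the formula by showing that this composite is chain-homotopic to the explicit chain map $\Psi : C(\alpha)\to C(\alpha)\otimes C(\alpha)$ given by $\Psi(c_n):=\sum_{i+j=n}c_i\otimes c_j$. That $\Psi$ is a chain map is a short Leibniz-rule calculation: the contributions to $\partial\Psi(c_{2m})$ from (odd, odd) index pairs vanish since $\partial c_{\mathrm{odd}}=0$, while those from (even, even) pairs telescope via $\partial c_{2k}=p^\alpha c_{2k-1}$ to give $p^\alpha\Psi(c_{2m-1}) = \Psi(\partial c_{2m})$. As a base case one checks that $\bar\Delta(e_0) := e_0\otimes e_0$ and $\bar\Delta(e_1) := e_0\otimes e_1 + e_1\otimes g_\alpha e_0$ satisfy the chain-map condition, since $\partial\bar\Delta(e_1) = (g_\alpha-1)(e_0\otimes e_0) = \bar\Delta(\partial e_1)$, and reduce to $\Psi(c_0)$ and $\Psi(c_1)$ respectively.

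Extending $\bar\Delta$ inductively so that each reduction matches $\Psi$ on the nose is the technical core. Acyclicity guarantees some preimage $\bar\Delta(e_n)$ with the correct boundary; its reduction to $C(\alpha)\otimes C(\alpha)$ differs from $\Psi(c_n)$ by a cycle $\zeta_n$ (both have the same boundary by the induction hypothesis), and the obstruction to adjusting $\bar\Delta(e_n)$ so that the reduction agrees with $\Psi(c_n)$ is precisely the class $[\zeta_n]\in\HH_n(C(\alpha)\otimes C(\alpha))$. I would show this class vanishes by decomposing $\HH_{2m+1}(C(\alpha)\otimes C(\alpha))$ along the K\"unneth short exact sequence: the projection identities $(\pi_k)_\ast\circ\Delta_\ast = \id$ pin down the two cross-product components of $\Delta_\ast[c_{2m+1}]$ to be $[c_0\otimes c_{2m+1}]$ and $[c_{2m+1}\otimes c_0]$, while by naturality the remaining Tor components are identified with the Toda-bracket classes $[\mathscr{T}(c_{2a-1}, c_{2(m+1-a)-1})]$ of Construction~\ref{deftor}. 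A direct chain-level rewriting shows that these pieces reassemble into $\sum_{i+j=2m+1}c_i\otimes c_j$.

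The hardest step is the identification of the Tor components of $\Delta_\ast[c_{2m+1}]$. This can either be done by writing down an explicit (combinatorially awkward) Steenrod-style diagonal approximation on the cyclic resolution $F(\alpha)$, or more cleanly by transporting the standard Alexander--Whitney diagonal from the bar resolution $B(G_\alpha)$ along an explicit chain equivalence $F(\alpha) \to B(G_\alpha)$ and tracking the image of $e_{2m+1}$ term by term.
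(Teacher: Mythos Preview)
Your approach can be made to work, but it is far more laborious than the paper's, and the obstruction-theoretic middle section is circular as written.

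The paper's proof is three lines. First, $\sum_{i=0}^{2m+1} c_i \otimes c_{2m+1-i}$ is a cycle (your Leibniz calculation). Second, since $\HH_{2m+1}(C(\alpha)_* \otimes C(\alpha)_*)$ is $p^{\alpha}$-torsion, it injects into homology with $\Z/p^{\alpha}$-coefficients, so it suffices to prove the identity there. Third, over $\Z/p^{\alpha}$ all homology groups of $BG_{\alpha}$ are free $\Z/p^{\alpha}$-modules, so the K\"unneth map is an isomorphism and the homological diagonal is dual to the well-known cup product on $\HH^*(BG_{\alpha};\Z/p^{\alpha})$, from which the formula is immediate.

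Your route constructs a $\Z G_{\alpha}$-equivariant diagonal approximation $\bar\Delta$ on $F(\alpha)$ and tries to force its reduction to agree with $\Psi$. The problem is your method for killing the obstruction $[\zeta_n]$: you propose to identify the Tor components of $\Delta_*[c_{2m+1}]$ with the Toda-bracket cycles ``by naturality'', but naturality of what? Knowing that this Tor component lands in the correct Toda \emph{coset} is not the same as knowing which element of that coset it is, and pinning down the precise element is exactly the computation of $\Delta_*$ you are trying to perform. You implicitly concede this in your final paragraph, where you fall back on writing down an explicit diagonal approximation (either directly on $F(\alpha)$ or transported from the bar resolution via Alexander--Whitney). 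Once you have that explicit formula the entire obstruction discussion is redundant: you just read off the reduction. So your proposal really amounts to ``compute the classical diagonal approximation for cyclic groups and reduce'', which is correct but is several pages of bookkeeping, against the paper's two-line duality trick.
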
 

\begin{proof} Obviously $\sum_{i=0}^{2m+1}  c_i \otimes c_{2m+1-i}$ is a cycle in $C(\alpha)_* \otimes C(\alpha)_*$. 
It is enough to show Lemma \ref{diag} after passing to coefficients $\Z/p^{\alpha}$. 
Using the K\"unneth isomorphism  $  \HH_* ( B G_\alpha \times  B G_\alpha; \Z/p^{\alpha}) \cong  \HH_* ( B G_\alpha ; \Z/p^{\alpha}) \otimes \HH_* (B G_{\alpha}; \Z/p^{\alpha})$  the claim now follows from the well known ring structure of $ \HH^*(B G_\alpha ; \Z/p^\alpha)$. 
\end{proof} 

\begin{defn} A cycle $c \in C(\alpha_1)_* \otimes \cdots \otimes C(\alpha_n)_*$ is called {\em positive},  if the homology class $[c] \in   \HH_*(B\Gamma)$  is positive with respect to $\mathscr{Q}$ in the sense of Definition \ref{poshom}.
\end{defn} 

Obviously the cycles $c_{2m-1} \in C(\alpha)_{2m-1}$ are positive for $m \geq 2$ since these can be represented by classifying maps of lens spaces $S^{2m-1}/ ( \Z/p^{\alpha}) \to B G_\alpha$. 
Furthermore the tensor product of two cycles, one of which is positive, is itself positive by Corollary \ref{summary} \ref{oans}, and for $m_1 , m_2 \geq 2$ the cycle $\mathscr{T} (c_{2m_1-1}, c_{2m_2-1}) \in C(\alpha_1)_* \otimes C(\alpha_2)_*$ is positive by Corollary \ref{summary} \ref{zwoa}.  
We will now identify some more positive cycles in $C(\alpha_1)_* \otimes C(\alpha_2)_*$.

\begin{prop} \label{calculate}  For $m \geq 2$ the following cycles in $C(\alpha_1)_* \otimes C(\alpha_2)_*$ are positive:  
   \begin{enumerate}[label={(\roman*)}] 
    \item  \label{tvo} $ p \cdot \mathscr{T}(c_1, c_{2m-1})$ and $p \cdot  \mathscr{T}(c_{2m-1}, c_1)$,  
    \item  \label{tre} $\mathscr{T}(c_1, c_{2m-1})$ and $\mathscr{T} (c_{2m-1}, c_1)$, if $\alpha_1 < \alpha_2$.
   \end{enumerate}  
\end{prop}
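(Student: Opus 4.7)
My plan is to prove (ii) first and then deduce (i) as a short functorial consequence. Abbreviate $\epsilon := \alpha_2 - \alpha_1$ and $T_r := [\mathscr{T}(c_{2r+1}, c_{2m-2r-1})] \in \HH_{2m+1}(BG_{\alpha_1} \times BG_{\alpha_2}; \Z)$ for $r = 0, \ldots, m-1$, so that $T_0 = [\mathscr{T}(c_1, c_{2m-1})]$ and $T_{m-1} = [\mathscr{T}(c_{2m-1}, c_1)]$ are the classes of interest.

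For (ii), assuming $\epsilon \geq 1$, I would construct two positive classes in $\HH_{2m+1}(BG_{\alpha_1} \times BG_{\alpha_2}; \Z)$ by pushing PSC lens space classes along two diagonal-type group homomorphisms. Specifically, let $Z := (B\rho)_*[L^{2m+1}_{\alpha_2}]$ where $\rho : G_{\alpha_2} \to G_{\alpha_1} \times G_{\alpha_2}$, $g \mapsto (\phi(g), g)$, with $\phi$ the canonical reduction, and $Z' := (B\tilde\rho)_*[L^{2m+1}_{\alpha_1}]$ where $\tilde\rho : G_{\alpha_1} \to G_{\alpha_1} \times G_{\alpha_2}$, $g \mapsto (g, \psi(g))$, with $\psi$ the subgroup inclusion. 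Combining Lemma \ref{diag} with Proposition \ref{compute} one obtains the explicit expansions
\[
Z = [c_0 \otimes c_{2m+1}] + \sum_{r=0}^{m-1} p^{r\epsilon}\, T_r + p^{m\epsilon}[c_{2m+1} \otimes c_0], \quad Z' = p^\epsilon [c_0 \otimes c_{2m+1}] + \sum_{r=0}^{m-1} T_r + [c_{2m+1} \otimes c_0].
\]

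The main obstacle is to prove $T_{m-1}$ positive: Corollary \ref{summary}(ii) does not apply directly because its second factor $[c_1]$ is a circle, which carries no positive scalar curvature metric. For the middle indices $1 \leq r \leq m-2$, however, both factors of $T_r$ are odd-degree lens space classes of dimension at least three, so each such $T_r$ is positive by Corollary \ref{summary}(ii). The difference $Z - Z'$ cancels the $T_0$ contribution (its coefficient becomes $p^0 - 1 = 0$); after rearranging the boundary-tensor terms and subtracting the already-positive middle $T_r$, one obtains that $(p^{(m-1)\epsilon} - 1)\,T_{m-1}$ is positive. Since $p$ is odd and $\epsilon \geq 1$, the integer $p^{(m-1)\epsilon} - 1$ is coprime to $p$ and hence invertible modulo $p^{\alpha_1}$, the order of $T_{m-1}$; multiplying by a suitable integer inverse expresses $T_{m-1}$ itself as a $\Z$-linear combination of positive classes. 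This is precisely where the hypothesis $\alpha_1 < \alpha_2$ enters: when $\epsilon = 0$ the coefficient $p^{(m-1)\epsilon} - 1$ vanishes and the trick breaks down. Substituting back into the expansion of $Z$ and subtracting all the (now positive) summands finally yields $T_0 = \mathscr{T}(c_1, c_{2m-1})$ positive.

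Part (i) follows from (ii) by functoriality of positive homology. For $\alpha_1 \geq 2$, the subgroup inclusion $\phi : G_{\alpha_1 - 1} \hookrightarrow G_{\alpha_1}$, $g \mapsto g^p$, satisfies $\phi_*(c_1) = p c_1$ and $\phi_*(c_2) = c_2$ by Proposition \ref{compute}; a short chain-level computation then shows $(\phi \times \id)_* \mathscr{T}(c_1, c_{2m-1})_{\alpha_1-1, \alpha_2} = p \cdot \mathscr{T}(c_1, c_{2m-1})_{\alpha_1, \alpha_2}$, and analogously with the two factors swapped. Since $\alpha_1 - 1 < \alpha_2$, part (ii) provides positivity in the domain, which transports to the target. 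The remaining case $\alpha_1 = 1$ is immediate: one checks directly that $p \cdot \mathscr{T}(c_1, c_{2m-1})_{1, \alpha_2} = \partial(c_2 \otimes c_{2m})$ and $p \cdot \mathscr{T}(c_{2m-1}, c_1)_{1, \alpha_2} = \partial(c_{2m} \otimes c_2)$, so both classes are null-homologous and a fortiori positive.
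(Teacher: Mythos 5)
Your argument is correct, and it reorganizes the proof in a genuinely different way from the paper. The paper proves \ref{tvo} first, by pushing $[c_{2m+1}]$ forward along the two homomorphisms $f_{\gamma}=(\id\times\phi_{\gamma})\circ\Delta$ for $\gamma=1$ and $\gamma=p$ and exploiting that $p$ divides $p^{m}-p$ exactly once, so that a suitable integral linear combination isolates $p\cdot[\mathscr{T}(c_1,c_{2m-1})]$ and $p\cdot[\mathscr{T}(c_{2m-1},c_1)]$ even when $\alpha_1=\alpha_2$; part \ref{tre} is then deduced from the expansion of your class $Z$ (the paper's $f=(\phi\times\id)\circ\Delta$), using \ref{tvo} to absorb the term $p^{(m-1)\epsilon}[\mathscr{T}(c_{2m-1},c_1)]$, and finally your $Z'$ (the paper's $f_1$) to recover $[\mathscr{T}(c_{2m-1},c_1)]$. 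You instead prove \ref{tre} self-containedly: your expansions of $Z$ and $Z'$ agree with the paper's computations, the middle brackets $T_r$ ($1\le r\le m-2$) and the two $c_0$-terms are positive for the same reasons, and the extraction of $T_{m-1}$ from $Z-Z'$ via the unit $p^{(m-1)\epsilon}-1$ modulo $p^{\alpha_1}$ (the order of $T_{m-1}$) is a clean replacement for the paper's appeal to \ref{tvo}; there is no circularity. Your deduction of \ref{tvo} from \ref{tre} is also sound: for $\alpha_1\ge 2$ the map $\phi:G_{\alpha_1-1}\to G_{\alpha_1}$, $g\mapsto g^p$, multiplies all odd-degree generators by $p$ and fixes the even ones (Proposition \ref{compute} with $\lambda p^{\alpha-\beta}=1$), so $(\phi\times\id)_*$ carries $\mathscr{T}(c_1,c_{2m-1})_{\alpha_1-1,\alpha_2}$ and $\mathscr{T}(c_{2m-1},c_1)_{\alpha_1-1,\alpha_2}$ to $p$ times the corresponding cycles for $(\alpha_1,\alpha_2)$, and \ref{tre} applies to the pair $(\alpha_1-1,\alpha_2)$ since $\alpha_1-1<\alpha_2$; for $\alpha_1=1$ both classes are boundaries ($p^{\alpha_1}\cdot\mathscr{T}=\partial(\cdot)$ by definition), hence zero and trivially positive. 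What each approach buys: the paper's $f_1,f_p$ argument treats \ref{tvo} uniformly at the level of the given pair, including $\alpha_1=\alpha_2$, while your route shortens \ref{tvo} to a one-line restriction/functoriality argument and makes the $\alpha_1=1$ case transparent, at the cost of the extra unit-inversion observation in \ref{tre}. Both proofs use exactly the same ingredients (Lemma \ref{diag}, Proposition \ref{compute}, Corollary \ref{summary}, positivity of lens-space classes, and the subgroup and functoriality properties of $\HH^{\mathscr{Q},+}_*$), so your proposal is a valid alternative.
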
 

\begin{proof} Given  $\gamma \in \N_{>0} $ we consider the group homomorphism $\phi_{\gamma}: G_{\alpha_1} \to G_{\alpha_2}$ defined by  $g_{\alpha_1} \mapsto (g_{\alpha_2})^{ \gamma \cdot ( p^{\alpha_2 - \alpha_1})}$ and the resulting homomorphism
\[
   f_{\gamma} :  G_{\alpha_1} \stackrel{\Delta}{\lra}  G_{\alpha_1} \times G_{\alpha_1} \stackrel{\id \otimes \phi_{\gamma}}{\lra}  G_{\alpha_1} \times G_{\alpha_2} \, . 
\]
We claim that for $m \geq 1$ the image of $(f_{\gamma})_* ( [c_{2m+1}])$ in $  \HH_*(\tilde C(\alpha_1)_* \otimes \tilde C(\alpha_2)_*)$ is equal to 
\begin{equation} 
     \gamma \cdot [ \mathscr{T}  ( c_{2m-1}, c_1)]  + \cdots + \gamma^{m} \cdot [ \mathscr{T} (c_1, c_{2m-1}) ]  \, . 
\label{coprod} 
 \end{equation} 
This follows from Lemma \ref{diag} and Proposition \ref{compute} (with $\alpha = \alpha_1$, $\beta = \alpha_2$, $\lambda = \gamma  \cdot p^{\alpha_2 - \alpha_1}$), which gives us 
\begin{align*} 
  (\id \times \phi_{\gamma})_* \left(  \sum_{i=1}^{2m} c_i \otimes c_{2m+1-i}\right) & = \sum_{j=1}^{m}  \big( c_{2m-2j+1} \otimes ( \gamma^j c_{2j})  + c_{2m-2j+2} \otimes  (p^{\alpha_2 - \alpha_1} \gamma^j  c_{2j-1} ) \big) \\ 
        & = \sum_{j=1}^{m}  \gamma^j \cdot \big( c_{2m-2j+1} \otimes c_{2j}  + c_{2m-2j+2} \otimes  p^{\alpha_2 - \alpha_1} c_{2j-1} \big)\\
        & = \sum_{j=1}^m \gamma^j \cdot \mathscr{T} (c_{2m-2j+1}, c_{2j-1}) \, ,
\end{align*}                
where the last equation uses Definition \ref{deftor}.

For $m \geq 2$ we have   $p \mid (p^{m} - p)$, but $p^2 \nmid (p^{m} -p)$.
Together with \eqref{coprod} and the fact that $c_{2m+1}$ and $\mathscr{T} (c_{2s-1}, c_{2t-1})$  for $s,t  \geq 2$ are positive, this implies that suitable linear combinations of $(f_1)_*([c_{2m+1}])$ and $(f_{p})_*([c_{2m+1}])$, which define positive classes  in $\HH_{2m+1}( C(\alpha_1)_* \otimes C(\alpha_2)_*)$, map to $p \cdot  [\mathscr{T}  (c_{2m-1}, c_1)]$ and to $p \cdot [ \mathscr{T}  (c_1, c_{2m-1})]$ in $ \HH_*(\tilde C(\alpha_1)_* \otimes \tilde C(\alpha_2)_*)$. 

Since all cycles in $C(\alpha_1)_{2m+1} \otimes C(\alpha_2)_0$ and  $C(\alpha_1)_0 \otimes C(\alpha_2)_{2m+1}$ are positive  this  finishes the proof of part \ref{tvo}. 
 
For part \ref{tre} let $\alpha_1 < \alpha_2$, consider the group homomorphism $\phi : G_{\alpha_2} \to G_{\alpha_1}$  defined by $g_{\alpha_2}   \mapsto  g_{\alpha_1}$ and the resulting homomorphism 
\[
   f  : G_{\alpha_2} \stackrel{\Delta}{\lra}   G_{\alpha_2} \times G_{\alpha_2} \stackrel{\phi \times \id}{\lra}  G_{\alpha_1}  \times G_{\alpha_2}\, . 
\]
We claim that for $m \geq 1$  the image of $f_* ( [c_{2m+1}])$ in $ \HH_*(\tilde C(\alpha_1)_* \otimes \tilde C(\alpha_2)_*)$ is equal to  
\begin{equation} \label{smash} 
       [\mathscr{T}   ( c_1, c_{2m-1})] + \ldots + p^{(m-1) (\alpha_2-\alpha_1)} \cdot [ \mathscr{T}   (c_{2m-1}, c_1)] \, . 
 \end{equation} 
We argue similarly as before, observing that by Proposition \ref{compute} (with $\lambda = 1$) we get 
\begin{align*} 
   (\phi \times \id)_* \left(  \sum_{i=1}^{2m} c_i \otimes c_{2m+1-i}\right) & = \sum_{j=1}^{m}  \big(  p^{(j-1)( \alpha_2 - \alpha_1)}  c_{2j-1}  \otimes c_{2m-2j+2}  +  p^{j ( \alpha_2 - \alpha_1) } c_{2j}  \otimes   c_{2m - 2j+1}  \big) \\ 
    & = \sum_{j=1}^{m} p^{(j-1)( \alpha_2 - \alpha_1)}  \cdot \big( c_{2j-1} \otimes c_{2m-2j+2}  + c_{2j} \otimes  ( p^{\alpha_2 - \alpha_1} c_{2m-2j+1} )\big)\\
        & = \sum_{j=1}^m p^{(j-1)(\alpha_2- \alpha_1)} \cdot \mathscr{T}(c_{2j-1}, c_{2m-2j+1}) \, .
\end{align*}               
Equation \eqref{smash} together with part \ref{tvo}  implies that for $m \geq 2$ there is a positive cycle in $C(\alpha_1)_* \otimes C(\alpha_2)_*$ that maps to $[\mathscr{T}  (c_1, c_{2m-1})]  \in \HH_*( \tilde C(\alpha_1)_* \otimes \tilde C(\alpha_2)_*)$. 
Similar as before this implies that the cycle $\mathscr{T}  (c_1, c_{2m-1}) \in C(\alpha_1)_* \otimes C(\alpha_2)_*$ is positive. 

By \eqref{coprod} applied to $\gamma = 1$ and the positivity of $c_{2m+1}$ and $\mathscr{T} (c_{2s-1}, c_{2t-1})$ for  $s,t  \geq 2$  there is a positive cycle in $C(\alpha_1)_* \otimes C(\alpha_2)_*$ that maps to 
\[
    [\mathscr{T} (c_1, c_{2m-1})] + [\mathscr{T}(c_{2m-1}, c_1)] \in \HH_*( \tilde C(\alpha_1)_* \otimes \tilde C(\alpha_2)_*)
\]
and hence (since $\mathscr{T}  (c_1, c_{2m-1})$ has already been verified as positive) a positive cycle that maps to  $[\mathscr{T}(c_{2m-1}, c_1)] \in   \HH_*( \tilde C(\alpha_1)_* \otimes \tilde C(\alpha_2)_*)$. 
Similar as before this implies that  $\mathscr{T} (c_{2m-1}, c_1) \in C(\alpha_1)_* \otimes C(\alpha_2)_*$ is positive, finishing the proof of part \ref{tre}.  
\end{proof} 

We also need to consider iterated Toda brackets of degree one cycles. 

\begin{lem} \label{drei} Let $1 \leq \alpha_1 \leq  \alpha_2 \leq \alpha_3$. 
\[
   \mathscr{T}(c_1, c_1, c_1) \in C(\alpha_1)_* \otimes C(\alpha_2)_* \otimes C(\alpha_3)_* \quad \textrm{and} \quad \mathscr{T}(c_1, c_1) \in C(\alpha_1)_* \otimes C(\alpha_2)_*
\]
 are positive. 
\end{lem}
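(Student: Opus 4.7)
The cycle $c_1 \in C(\alpha_i)_1$ does not represent a positive class, since it is realised by a circle which admits no metric of positive scalar curvature; consequently Corollary \ref{summary}\ref{zwoa} is not directly applicable. The plan, modelled on the proof of Proposition \ref{calculate}, is to push forward positive lens-space classes of higher degree along suitable diagonal-type homomorphisms and isolate the desired Toda bracket from the decomposition \eqref{dirsum} of the image.

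For $\mathscr{T}(c_1, c_1)$ the plan is to consider the homomorphism $f : G_{\alpha_1} \to G_{\alpha_1} \times G_{\alpha_2}$, $g \mapsto (g, \phi(g))$, where $\phi : g_{\alpha_1} \mapsto g_{\alpha_2}^{p^{\alpha_2 - \alpha_1}}$, and to apply $f_*$ to $[c_3] \in \HH_3(BG_{\alpha_1})$. This input class is positive: it is represented by the $3$-dimensional lens space $S^3 / (\Z/p^{\alpha_1})$ equipped with its round metric. A direct calculation combining Lemma \ref{diag} with Proposition \ref{compute} (for $\lambda = p^{\alpha_2 - \alpha_1}$) will yield
\[
   f_*[c_3] \;=\; [c_3 \otimes c_0] \;+\; [\mathscr{T}(c_1, c_1)] \;+\; p^{\alpha_2 - \alpha_1} \cdot [c_0 \otimes c_3] \, .
\]
The left hand side is positive by functoriality of $\HH_*^{\mathscr{Q},+}$, and the two outer summands on the right are positive because they pull back from the positive class $[c_3]$ on a single factor. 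Since $\HH_*^{\mathscr{Q},+}$ is by definition a subgroup and therefore closed under taking inverses, positivity of $[\mathscr{T}(c_1, c_1)]$ follows.

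For $\mathscr{T}(c_1, c_1, c_1)$ the analogous input is $[c_5] \in \HH_5(BG_{\alpha_1})$, positive because it is represented by the $5$-dimensional lens space $S^5 / (\Z/p^{\alpha_1})$. The relevant map is $f : G_{\alpha_1} \to G_{\alpha_1} \times G_{\alpha_2} \times G_{\alpha_3}$, $g \mapsto (g, \phi_2(g), \phi_3(g))$, with $\phi_i : g_{\alpha_1} \mapsto g_{\alpha_i}^{p^{\alpha_i - \alpha_1}}$. Iterating Lemma \ref{diag} gives $\Delta_*^{(2)}[c_5] = \bigl[\sum_{i+j+k = 5} c_i \otimes c_j \otimes c_k\bigr]$, and Proposition \ref{compute} fixes the coefficient of each term in $f_*[c_5]$. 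Splitting the result via \eqref{dirsum}, the piece in $\tilde C(\alpha_1)_* \otimes \tilde C(\alpha_2)_* \otimes \tilde C(\alpha_3)_*$ will equal $[\mathscr{T}(c_1, c_1, c_1)]$ plus $p^{(\alpha_2 - \alpha_1) + (\alpha_3 - \alpha_1)}$ times each of the cycles $[c_1 \otimes c_1 \otimes c_3]$, $[c_1 \otimes c_3 \otimes c_1]$, $[c_3 \otimes c_1 \otimes c_1]$, all of which are positive by Corollary \ref{summary}\ref{oans} since the factor $[c_3]$ is positive. Every other summand of \eqref{dirsum} contains at least one $\{c_0\}$-factor and thus identifies with the pushforward of $[c_5]$ along a homomorphism into a product of fewer $G_{\alpha_i}$'s, followed by the inclusion sending the omitted coordinates to the basepoint; each such class is positive by functoriality. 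Hence $f_*[c_5] - [\mathscr{T}(c_1, c_1, c_1)]$ is a sum of positive classes, and closure under inverses in $\HH_*^{\mathscr{Q},+}$ will force $[\mathscr{T}(c_1, c_1, c_1)]$ to be positive.

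The principal obstacle is precisely the non-positivity of the degree-one cycle $c_1$, which blocks a direct appeal to Corollary \ref{summary}\ref{zwoa}; the method circumvents it by using the positive odd-sphere lens-space classes $[c_3]$ and $[c_5]$ as inputs and extracting the Toda brackets by functoriality. The only remaining technical work is the bookkeeping for the triple-tensor case: systematically matching each summand of $f_*[c_5]$ in the decomposition \eqref{dirsum} (other than the $[\mathscr{T}(c_1, c_1, c_1)]$-piece) with the functorial image of a positive class, and tracking the $p$-power coefficients produced by Proposition \ref{compute}.
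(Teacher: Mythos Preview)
Your overall strategy—pushing the positive lens-space classes $[c_3]$ and $[c_5]$ forward along diagonal-type homomorphisms and then isolating the Toda bracket via the decomposition \eqref{dirsum}—is exactly the method the paper uses, and your treatment of $\mathscr{T}(c_1,c_1)$ is correct as stated.

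In the three-factor case there is a genuine error in your formula for the iterated coproduct. Lemma~\ref{diag} only computes $\Delta_*$ on odd-degree classes; to iterate one also needs $\Delta_*(c_{2m})$, and for odd $p$ the vanishing $s^2=0$ in $\HH^*(BG_\alpha;\Z/p^\alpha)$ forces
\[
   \Delta_*(c_{2m}) \;=\; \sum_{i+j=m} c_{2i}\otimes c_{2j}
\]
(even--even terms only). Hence the triple diagonal is
\[
   \Delta^{(2)}_*[c_5] \;=\; \Bigl[\sum_{\substack{d_1+d_2+d_3=5\\ \text{exactly one } d_i \text{ odd}}} c_{d_1}\otimes c_{d_2}\otimes c_{d_3}\Bigr],
\]
and the all-odd summands $c_1\otimes c_1\otimes c_3$, $c_1\otimes c_3\otimes c_1$, $c_3\otimes c_1\otimes c_1$ that you list simply do not occur. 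As written, your displayed identity for $f_*[c_5]$ is therefore false, and subtracting those phantom terms would not yield $[\mathscr{T}(c_1,c_1,c_1)]$. With the corrected formula the argument becomes cleaner: the component of $f_*[c_5]$ in $\tilde C(\alpha_1)_*\otimes\tilde C(\alpha_2)_*\otimes\tilde C(\alpha_3)_*$ is exactly $\mathscr{T}(c_1,c_1,c_1)$, and the remaining strata are handled as you indicate. Note, however, that a single lower stratum is not literally the pushforward along one restricted homomorphism; one needs inclusion--exclusion over the subsets (this is precisely the role of the $\Delta^{(i,j)}$ terms and the subtracted single-factor classes in the paper's displayed identity).

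The paper organises the bookkeeping slightly differently: it first treats the case $\alpha_1=\alpha_2=\alpha_3$ using the plain diagonal and the partial diagonals $\Delta^{(i,j)}$, and then reduces the general case to this one by observing that the map $\Phi\colon(G_{\alpha_1})^3\to G_{\alpha_1}\times G_{\alpha_2}\times G_{\alpha_3}$, $\phi_i(g_{\alpha_1})=g_{\alpha_i}^{\,p^{\alpha_i-\alpha_1}}$, sends $\mathscr{T}(c_1,c_1,c_1)$ to $\mathscr{T}(c_1,c_1,c_1)$. Your single-step direct computation is equivalent once the coproduct formula is fixed.
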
 

\begin{proof} 

First let  $\alpha := \alpha_1 = \alpha_2 = \alpha_3$. 
The diagonal map $\Delta : G_\alpha \to (G_\alpha)^3$, $g \mapsto (g,g,g)$, satisfies 
\[
    \Delta_*  ([c_{5}])  =  [\sum_{(d_1, d_2, d_3) \in D} c_{d_1} \otimes c_{d_2}  \otimes c_{d_3} ] \, ,
\]
where $D$ contains all triples $(d_1, d_2, d_3)$ with $0 \leq d_i \leq 5$, $\sum d_i  = 5$ and precisely one odd $d_i$. 
This follows from the ring structure of $ \HH^*(B G_\alpha ; \Z/p^\alpha)$ and the assumption that $p$ is odd.
For $1 \leq i < j \leq 3$ let $\Delta^{(i,j)} : G_{\alpha} \to (G_{\alpha})^3$ denote the diagonal map $\delta : G_{\alpha} \to (G_\alpha)^2$ composed with the embedding $(G_\alpha)^2 \to (G_{\alpha})^3$ to the $i$th and $j$th factors. 
Since $\delta_*([c_5]) = [ \sum_{i=0}^5 c_i \otimes c_{5-i} ]$ a direct calculation shows 
\[
   \Delta_*  ([c_{5}])  = [ \mathscr{T}(c_1, c_1, c_1) ]+ \sum_{1 \leq i < j \leq 3} \Delta^{(i,j)}_*([c_5]) - [c_5 \otimes c_0 \otimes c_0 + c_0 \otimes c_5 \otimes c_0 + c_0 \otimes c_0 \otimes c_5] \, . 
\]
Since $c_5$ is positive this implies positivity of $ \mathscr{T}(c_1, c_1, c_1) \in (C(\alpha)_*)^3$.
Now let $1 \leq \alpha_1 \leq \alpha_2 \leq \alpha_3$ and let 
\[
   \Phi : G_{\alpha_1} \times G_{\alpha_1}  \times G_{\alpha_1} \stackrel{ \phi_1 \times \phi_2 \times \phi_3}{\lra}  G_{\alpha_1} \times G_{\alpha_2}  \times G_{\alpha_3} \, , 
\]
where $\phi_i : G_{\alpha_1} \to G_{\alpha_i}$ is induced  by $g_{\alpha_1} \mapsto (g_{\alpha_i})^{p^{\alpha_i - \alpha_1}}$, $1 \leq i \leq 3$.
By Proposition \ref{compute} we obtain 
\[
    \Phi_* ( \mathscr{T} (c_1, c_1, c_1) )  = c_1 \otimes c_2 \otimes c_2 + p^{\alpha_2-\alpha_1} c_2 \otimes c_1 \otimes c_2 + p^{\alpha_3 - \alpha_1} c_2 \otimes c_2 \otimes c_1 =  \mathscr{T} (c_1, c_1 , c_1) \, . 
\] 
This implies the first assertion. 
The proof of the second assertion is similar.
\end{proof} 

We obtain the following conclusive result on the positivity of iterated Toda bracket cycles (which may contain degree one cycles). 
 
\begin{prop}  \label{bplreppos} Let $n \geq 2$,  $1 \leq \alpha_1 \leq \cdots \leq \alpha_n$ 
  and $m_1, \ldots, m_n \geq 1$. 
Then the following cycles in $C(\alpha_1)_* \otimes \cdots \otimes C(\alpha_n)_*$ are positive: 
\begin{enumerate}[label={(\roman*)}] 
  \item  \label{dois} $p \cdot  \mathscr{T} (c_{2m_{1} -1}^{(1)} , \ldots, c_{2m_{n} -1}^{(n)})$, if $\alpha_1 = \cdots = \alpha_n$, 
  \item  \label{um} $\mathscr{T} (c_{2m_{1}-1}^{(1)} , \ldots, c_{2m_{n} -1}^{(n)})$, if  $\alpha_{1}  < \alpha_{n}$.
\end{enumerate} 
\end{prop}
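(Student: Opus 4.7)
I induct on $n \geq 2$, the base case $n = 2$ being Proposition \ref{calculate} combined with Lemma \ref{drei}. For the inductive step I separate two regimes: the generic case when some $m_j \geq 2$, where one peels off a factor using the Toda bracket structure, and the critical case $m_1 = \cdots = m_n = 1$, which is handled directly by a diagonal computation.

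\emph{Regime B (all $m_i = 1$).} First suppose $\alpha_1 = \cdots = \alpha_n = \alpha$. Consider the $n$-fold diagonal $\Delta^n\colon G_\alpha \to G_\alpha^n$ applied to the positive lens space class $[c_{2n-1}] \in \HH_{2n-1}(BG_\alpha)$. Iterating Lemma \ref{diag} and using the ring structure of $\HH^*(BG_\alpha;\Z/p^\alpha)$ (an exterior-times-polynomial algebra because $p$ is odd, so $x^3 = 0$ kills all cohomology terms with three or more $x$-factors), one obtains
\[
   \Delta^n_*([c_{2n-1}]) = \Bigl[\sum_{(d_1, \ldots, d_n) \in D} c^{(1)}_{d_1} \otimes \cdots \otimes c^{(n)}_{d_n}\Bigr],
\]
where $D$ consists of the $n$-tuples with $\sum d_i = 2n-1$ and exactly one odd coordinate. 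Möbius inversion over the support subsets expresses the sum of those tuples with every $d_i \geq 1$ as $\sum_{I \subseteq \{1, \ldots, n\}} (-1)^{n-|I|}\,\Delta^I_*([c_{2n-1}])$, where $\Delta^I$ is the $|I|$-fold diagonal composed with inclusion to the $I$-indexed factors. Each $\Delta^I_*([c_{2n-1}])$ is positive by functoriality of positive homology. A parity argument—an odd $d_i \geq 3$ would force $\sum_{j \neq i} d_j \leq 2(n-2)$, contradicting $d_j \geq 2$—shows that the only all-positive tuples in $D$ are the $n$ cyclic shifts of $(1,2,\ldots,2)$, i.e.\ exactly the terms of $\mathscr{T}(c^{(1)}_1, \ldots, c^{(n)}_1)$. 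Hence this cycle is positive. The general case $\alpha_1 \leq \cdots \leq \alpha_n$ follows by pullback along $\Phi\colon G_{\alpha_1} \to G_{\alpha_1} \times \cdots \times G_{\alpha_n}$, $g \mapsto (g, g_{\alpha_2}^{p^{\alpha_2 - \alpha_1}}, \ldots, g_{\alpha_n}^{p^{\alpha_n - \alpha_1}})$: Proposition \ref{compute} identifies $\Phi_*\bigl(\mathscr{T}_{G_{\alpha_1}^n}(c_1, \ldots, c_1)\bigr)$ with $\mathscr{T}_{\alpha_1, \ldots, \alpha_n}(c_1, \ldots, c_1)$, and positivity is preserved.

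\emph{Regime A (some $m_j \geq 2$).} Pick an index $j$ with $m_j \geq 2$. Separating off the $j$-th tensor factor in $\mathscr{T}(c^{(1)}_{2m_1-1}, \ldots, c^{(n)}_{2m_n-1}) = \tfrac{1}{p^{\alpha_1}}\partial(c^{(1)}_{2m_1} \otimes \cdots \otimes c^{(n)}_{2m_n})$ yields a $2$-fold Toda bracket presentation
\[
  [\mathscr{T}(c^{(1)}_{2m_1-1}, \ldots, c^{(n)}_{2m_n-1})] \in \pm \langle [c^{(j)}_{2m_j - 1}], p^{\alpha_j}, [\mathscr{T}^{\hat j}] \rangle,
\]
where $\mathscr{T}^{\hat j}$ is the $(n-1)$-fold Toda bracket on the remaining factors (the rescaling $p^{\alpha_j - \alpha_1}$ absorbed into the chosen representative of $[\mathscr{T}^{\hat j}]$ when $\alpha_j > \alpha_1$). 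The class $[c^{(j)}_{2m_j-1}]$ is positive as a lens space class of dimension $\geq 3$. Choosing $j$ appropriately—an internal index $1 < j < n$ with $m_j \geq 2$ when available, otherwise $j = 1$ or $j = n$—the induction hypothesis yields positivity of $[\mathscr{T}^{\hat j}]$ whenever the induction case (ii) applies to the $(n-1)$-tuple obtained by deletion, while any $p$-multiple produced by induction case (i) can be absorbed by the coefficient $p^{\alpha_\bullet - \alpha_1} \geq p$ appearing on the right-hand Toda factor when $\alpha_1 < \alpha_2$. Corollary \ref{summary}(ii) then gives the required positivity of $[\mathscr{T}]$ in case (ii); in case (i) one works with $p \cdot [\mathscr{T}]$ and uses the scaling identity $p \cdot \langle a, r, b\rangle \subseteq \langle pa, r, b\rangle$, after which $p \cdot [\mathscr{T}^{\hat j}]$ is positive by induction.

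The main obstacle is the boundary configuration in case (ii): $\alpha_1 = \cdots = \alpha_{n-1} < \alpha_n$ with $m_1 = \cdots = m_{n-1} = 1$ and $m_n \geq 2$. There is no internal index to peel from, and peeling from $j = n$ leaves $[\mathscr{T}_{n-1}] = [\mathscr{T}(c^{(1)}_1, \ldots, c^{(n-1)}_1)]$, which the inductive hypothesis (case (i), all $\alpha_i$ equal) only provides up to a factor $p$—too weak for Corollary \ref{summary}(ii). The resolution is that Regime B, applied directly at $(n-1)$ factors, actually proves the sharper statement that $[\mathscr{T}(c^{(1)}_1, \ldots, c^{(n-1)}_1)]$ is itself positive and not merely $p$-times positive; substituting this sharper input into Regime A closes the induction and finishes the proof.
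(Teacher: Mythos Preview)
Your argument is essentially correct and takes a genuinely different route from the paper.  The paper does not induct on $n$: it classifies by $r$, the number of indices with $m_i=1$.  For $r=0$ the class is an iterated Toda bracket of lens-space classes; for $r\geq 2$ the paper groups the degree-one factors into pairs and triples and invokes Lemma~\ref{drei} only in those low cases, then wraps in the remaining positive factors; for $r=1$ the lone $c_1$ is paired with one neighbouring factor via Proposition~\ref{calculate}.  Your Regime~B is a clean $n$-fold generalization of Lemma~\ref{drei} (the M\"obius inversion is legitimate because $\tilde\HH_*(B\Gamma';\Z)$ is $p^\alpha$-torsion, so reduction mod $p^\alpha$ is injective), and it isolates the stronger fact that $\mathscr{T}(c_1,\dots,c_1)$ itself---not merely its $p$-multiple---is positive when all $\alpha_i$ coincide; the paper's grouping argument yields this too but does not state it.  In exchange your Regime~A carries more bookkeeping.

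Two small slips to fix.  First, the bracket obtained by peeling off index $j$ should be taken with $r=p^{\alpha_1}$, not $p^{\alpha_j}$: for $j>1$ one has $[\mathscr{T}]\in\pm\langle p^{\alpha_j-\alpha_1}[c^{(j)}_{2m_j-1}],\,p^{\alpha_1},\,[\mathscr{T}^{\hat\jmath}]\rangle$, and only for $j=1$ does a factor $p^{\alpha_2-\alpha_1}$ appear on the $\mathscr{T}^{\hat\jmath}$-side.  Second, in case~(i) the useful scaling inclusion is $p\cdot\langle a,r,b\rangle\subseteq\langle a,r,pb\rangle$ (so that $a=[c^{(j)}]$ is positive and $pb=p[\mathscr{T}^{\hat\jmath}]$ is positive by induction), not $\langle pa,r,b\rangle$; with your version Corollary~\ref{summary}\ref{zwoa} would still require $[\mathscr{T}^{\hat\jmath}]$ itself positive.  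Your ``absorption'' clause for case~(ii) is then actually redundant: whenever removing $j$ equalizes the $\alpha$'s and some remaining $m_k\geq 2$, one can peel $k$ instead (removing any $1<k<n$, or $k=n$ when $j=1$, preserves $\alpha_1<\alpha_{\max}$), so the only genuine boundary case is the one you identify in the last paragraph, resolved by Regime~B.
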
 

\begin{proof}
Let $1 \leq i_1 < \ldots < i_r \leq n$ be those indices with $m_{i_j} = 1$ (where $0 \leq r \leq n$). 
Note that for all $1 \leq k \leq n$ which are different from any $i_j$ the cycle $c^{(k)}_{2m_k - 1}$ is positive. 
We consider the following cases: 
\begin{itemize} 
  \item If $r = 0$ then $\mathscr{T} (c_{2m_{1}-1}^{(1)} , \ldots, c_{2m_{n} -1}^{(n)})$ represents an iterated Toda product of positive classes and is hence positive by Corollary \ref{summary} \ref{zwoa}.
  \item If $r > 1$  then $\mathscr{T} (c_{1}^{(i_1)} , \ldots, c_{1}^{(i_r)})$ is positive by  Lemma  \ref{drei} by grouping the cycles $c^{(i_j)}_1$ for $1 \leq j \leq r$  into families of two and three, and applying the fact that Toda brackets of positive classes are positive.
  Hence $\mathscr{T} (c_{2m_{1}-1}^{(1)} , \ldots, c_{2m_{n} -1}^{(n)})$ represents an iterated Toda bracket of positive classes, and is therefore positive.
  \item If $r = 1$ let $1 \leq i \leq n$ be the unique index with $m_{i} = 1$.
If $\alpha_{1}  < \alpha_{n}$ we find $ 1 \leq k \leq n$ with $\alpha_{i} \neq \alpha_{k}$. 
Then   $\mathscr{T}(c^{(i)}_{2m_{i} - 1}, c^{(k)}_{2m_k-1})$ (resp. $\mathscr{T}(c^{(k)}_{2m_{k} - 1}, c^{(i)}_{2m_i-1})$ if   $k < i$) is positive by Proposition \ref{calculate} \ref{tre}. 
If $\alpha_{1} = \alpha_{n}$, then $p \cdot \mathscr{T} (c^{(i)}_{2m_{i} - 1}, c^{(k)}_{2m_k-1})$ is positive for any $1 \leq k \leq n$ with  $k \neq i$ by Proposition \ref{calculate} \ref{tvo}.
The proof can now be finished as before. 
\end{itemize}
\end{proof}

For $ \kappa \geq 1$, $\gamma \in \N_{>0}$  and $m = p^{\kappa}$ we get $\gamma^m \equiv \gamma \mod(p)$ in   \eqref{coprod}. 
Hence the following problem cannot be answered with the  methods developed in this section and enforces us to restrict to $p$-divisible cycles in Proposition \ref{bplreppos} \ref{dois}, in general. 

\begin{question} \label{openprob} Let $\alpha, \kappa \geq 1$. 
Is the $p$-atoral (for odd $p$) cycle $\mathscr{T}(c_1, c_{2p^{\kappa} -1}) \in C(\alpha)_* \otimes C(\alpha)_*$ positive? 
\end{question}

%%%%%
\section{Generalized products of lens spaces in group homology} \label{realis} 
%%%%%

Let $p$ be an odd prime, let $\alpha, n   \geq 1$, and let $\Gamma := (G_\alpha)^n$. 
For our proof of Theorem \ref{representable}  we will argue that certain $p$-atoral cycles  such as the one  in Question \ref{openprob} are not contained in the image of  $\Omega^{\SO}_*(B\Gamma) \to \HH_*(B\Gamma)$  and can therefore be ignored.  
In this section we will approach this issue, which is related to the classical Steenrod problem on the realization of homology classes by smooth manifolds, in terms of natural stable homology operations defined for any topological space $X$, 
\[
 \partial^{(\kappa,\ell)} : \HH_*( X ; \Z/p^{\ell})  \to  \HH_{* - 2p^{\kappa} + 1} (X ; \Z/p^{\ell}) \, ,  \kappa, \ell \geq 1 \, , 
\]
which by construction vanish on classes coming from $\Omega^{SO}_*(X)$.
In Proposition \ref{productlens}, the main result of this section, we will show that the vanishing of the operations $\partial^{(\kappa, \alpha)}$, together with the vanishing of a suitable Bockstein operation, is indeed sufficient to detect elements in the image of $\Omega^{SO}_*(B \Gamma) \to \HH_*(B \Gamma ; \F_p)$, and that these elements can be represented by products of standard $\Z/p^{\alpha}$-lens spaces. 

Since we prefer to avoid  a discussion of stable (co-)homology operations with coefficients $\Z/p^\ell$, for which we did not find a handy account in the literature, we construct the operation $\partial^{(\kappa, \ell)}$ as a differential in the Atiyah-Hirzebruch spectral sequence of a homology theory derived from Brown-Peterson theory at the prime $p$, whose well known structure allows us to derive some crucial properties of $\partial^{(\kappa, \ell)}$.

Recall that the coefficient ring for Brown-Peterson theory at the prime $p$ is isomorphic to a polynomial ring 
\[
   \BP_* \cong \Z_{(p)} [v_1, v_2, \ldots ] \, , 
\]
where $v_i \in \BP_{2p^i - 2}$. 
As usual we set $v_0 = p$. 
For $\kappa ,\ell \geq 1$ we define the ideal 
\[
    I(\kappa, \ell) := (p^{\ell}, v_1, \ldots, v_{\kappa-1}, v_{\kappa}^2, v_{\kappa +1} ,\ldots ) \subset \BP_* \, . 
\]

\begin{prop} \label{constrbpj} There is a multiplicative homology theory $\BP^{(\kappa, \ell)}$ with coefficient ring  
\[
   \BP^{(\kappa, \ell)}_* \cong \Z_{(p)} [v_1, v_2, \ldots ] / I(\kappa, \ell) 
\]
 together with a natural transformation of multiplicative homology theories $\BP \to \BP^{(\kappa, \ell)}$, which on the level of coefficients induces the projection $ \Z_{(p)} [v_1, v_2, \ldots ] \to \Z_{(p)} [v_1, v_2, \ldots ]  / I(\kappa, \ell)$. 
\end{prop}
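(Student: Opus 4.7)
The plan is to realize $\BP^{(\kappa,\ell)}$ as an iterated cofiber quotient of $\BP$ by the generators of $I(\kappa,\ell)$, in the spirit of the Baas--Sullivan construction used elsewhere in the paper. First I would order the generators as $a_0 := p^\ell$, $a_i := v_i$ for $1 \leq i \leq \kappa-1$, $a_\kappa := v_\kappa^2$, and $a_i := v_i$ for $i \geq \kappa+1$, and verify that this is a regular sequence in $\BP_* = \Z_{(p)}[v_1, v_2, \ldots]$. This is direct, since each $a_i$ is a power of a distinct polynomial generator in the polynomial ring $\BP_*$.

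Next I would form the sequence of $\BP$-module spectra $E_{-1} := \BP$ and $E_i := \mathrm{cof}\bigl( \Sigma^{|a_i|} E_{i-1} \xrightarrow{a_i} E_{i-1} \bigr)$, and set $\BP^{(\kappa,\ell)} := \mathrm{hocolim}_i E_i$. Regularity of the sequence guarantees that at each step the long exact sequence in coefficients degenerates to a short exact sequence, yielding $(E_i)_* = (E_{i-1})_*/(a_i)$. Because $|v_i| = 2p^i - 2 \to \infty$, the tower stabilizes in each fixed degree, so the coefficient ring of $\BP^{(\kappa,\ell)}$ is the claimed quotient $\BP_*/I(\kappa,\ell)$. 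The structural maps in the colimit provide a natural transformation $\BP \to \BP^{(\kappa,\ell)}$ of $\BP$-module spectra, in particular of homology theories.

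The main obstacle is to equip $\BP^{(\kappa,\ell)}$ with a compatible multiplicative structure. At an odd prime this is classical, going back to W\"urgler: quotients of $\BP$ by regular sequences consisting of polynomial generators (or small powers of such) admit associative and commutative products compatible with the $\BP$-action, the obstructions to associativity and commutativity living in Ext groups that vanish for odd $p$ by a degree count. The factor $v_\kappa^2$ in place of $v_\kappa$ introduces no additional difficulty once regularity has been verified, since only regularity and the odd-primary hypothesis enter into the obstruction-theoretic argument. Applying this to our tower produces the required multiplicative structure on $\BP^{(\kappa,\ell)}$, with the natural transformation from $\BP$ becoming a map of multiplicative homology theories as claimed.
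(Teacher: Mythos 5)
Your proposal is correct in outline but takes a genuinely different route from the paper. The paper stays entirely inside the Baas--Sullivan framework: it constructs $\BP^{(\kappa,\ell)}$ as bordism with singularities over $\MU_{(p)}$, killing the regular sequence $(p^{\ell}, x_1, \ldots, (x_{p^{\kappa}-1})^2, \ldots)$ in $\pi_*(\MU)_{(p)}$, and obtains the commutative product together with the multiplicative transformation $\BP \to \BP^{(\kappa,\ell)}$ in one stroke from the odd-primary product theorem of Shimada--Yagita \cite{Shimada}*{Theorem 6.2}. You instead build the theory as an iterated cofiber quotient of $\BP$ in a category of $\BP$-module spectra and then appeal to obstruction theory for products on such quotients; your identification of the coefficients via regularity and stabilization in each degree is fine. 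Two points, however, deserve more care than you give them. First, to form cofibers of module spectra and to push a product through the homotopy colimit you need a sufficiently structured multiplicative model (for instance, one can work in $\MU_{(p)}$-modules, where Strickland/EKMM-type results on quotients by regular sequences apply); with only the classical homotopy-commutative $\BP$, the ``cofiber in $\BP$-modules'' bookkeeping is precisely what the singularities formalism, or W\"urgler's explicit towers, were designed to handle. Second, your attribution and mechanism are loose: W\"urgler treats the invariant ideals $I_n$, while the general regular-sequence case at odd primes is covered geometrically by Mironov \cite{Mironov} and Shimada--Yagita (the paper's citation) or module-theoretically by Strickland; moreover the commutativity/associativity obstructions vanish at odd primes partly because they are elements of order two, not purely by a degree count, and one must also arrange the stage-wise quotient maps to be multiplicative so that the product descends to the colimit and $\BP \to \BP^{(\kappa,\ell)}$ is a map of multiplicative theories. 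What your approach buys is independence from the geometric singularities machinery; what the paper's approach buys is that a single classical citation delivers existence, commutativity, and the multiplicativity of the transformation, while keeping the construction in the same framework used throughout the paper.
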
 

\begin{proof} Recall the construction of  a (homotopy) commutative ring spectrum $\BP$ representing Brown-Peterson theory for odd $p$ in \cite{Shimada}*{Corollary 6.7}, which is based on  bordism theory with Baas-Sullivan singularities killing the polynomial generators $x_j$ for $j \neq p^{i} - 1$ with $i \geq 1$, of $\pi_*(\MU)_{(p)}  \cong \Z_{(p)} [ x_j \mid j \geq 1, \deg(x_j) = 2j]$.
Here $\MU$ denotes the unitary bordism spectrum.

We construct  $\BP^{(\kappa, \ell)}$ in a similar fashion as a bordism theory with Baas-Sullivan singularities, killing the regular sequence $(p^{\ell}, x_1, \ldots, (x_{p^{\kappa}-1})^2, \ldots ) $ in $\pi_*(\MU)_{(p)}$. 
It follows from \cite{Shimada}*{Theorem 6.2} (and the assumption that $p$ is odd)  that this theory is represented by a commutative ring spectrum $\BP^{(\kappa, \ell)}$. 
Furthermore, by construction,  there  is a canonical map of ring spectra $\BP \to \BP^{(\kappa, \ell)}$ with the stated property on the level of coefficients. 
\end{proof} 

We have $\BP^{(\kappa,\ell)}_* \cong  \langle 1, v_\kappa \rangle_{\Z/p^{\ell}}$, the free graded $\Z/p^{\ell}$-module with generators $1$ in degree $0$ and $v_\kappa$ in degree $2p^{\kappa}-2$, with multiplication satisfying $v_\kappa^2 = 0$. 
The theory $\BP^{(\kappa, \ell)}$ may be considered as a form of extraordinary $K$-theory, where we have introduced the additional  truncation $v_{\kappa}^2 = 0$  for computational purposes, compare Lemma \ref{formal}. 

Let $X$ be a topological space and consider the Atiyah-Hirzebruch spectral sequence 
\[
      E^2_{s,t} = \HH_s(X ; \BP^{(\kappa, \ell)} _t) \Longrightarrow \BP^{(\kappa, \ell)}_{s+t}(X) \, . 
\]
The term $E^{2}_{s,t}$  is  non-zero precisely  for $t = 0$ and $t = 2p^\kappa -2$, and in these cases is canonically isomorphic to $ \HH_s(X ; \Z/p^{\ell})$ (depending on the choice of $v_{\kappa}$). 
In particular we have $E^{2}_{s,t} = E^{2p^{\kappa}-1}_{s,t}$ and we define
\begin{equation} \label{defpartial} 
   \partial^{(\kappa, \ell)} : \HH_*(X ; \Z/p^{\ell} ) \to \HH_{*- 2p^\kappa+1} (X; \Z/p^{\ell} ) 
\end{equation} 
as the differential $\partial^{2p^{\kappa} - 1} : E^{2p^{\kappa}-1}_{s,0} \to E^{2p^{\kappa} -1}_{s-2p^k+1, 2p^{\kappa} -2}$.
It is immediate from this construction that $\partial^{(\kappa,\ell)}$ is natural in $X$, is stable with respect to suspensions,  and is a derivation with respect to the homological cross product. 
Since the natural transformation $\Omega^{\SO}_*(X) \to  \HH_*(X ; \Z/p^\ell)$ factors through $\BP^{(\kappa, \ell)}(X)$ all classes in $\HH_*(X; \Z/p^\ell)$ coming from $\Omega^{\SO}_*(X)$ lie in the kernel of $\partial^{(\kappa, \ell)}$.  

\begin{rem} The cohomology operation $\HH^*( X ; \F_p) \to \HH^{* + 2p^{\kappa}-1} (X ; \F_p)$ dual to $\partial^{(\kappa, 1)}$ can be identified with the $\kappa$th Milnor basis element $Q_{\kappa} \in \mathscr{A}_p^{2p^{\kappa}-1}$ in the ${\rm mod}(p)$-Steenrod algebra. 
\end{rem}

For evaluating the operations $\partial^{(\kappa, \ell)}$ for $X = BG_\alpha$ we need to determine the $\BP^{(\kappa, \ell)}$-theoretic Euler class of the fibration $S^1 \hookrightarrow BG_\alpha \to \CP^{\infty}$. 
This is  based on a formal group law computation. 
For $\kappa \geq 1$ we define the ideal 
\[
   I(\kappa) := (v_1, \ldots, v_{\kappa-1} , v_{\kappa}^2, v_{\kappa+1}, \ldots) \subset \BP_* = \BP^{-*}
\]
and set $\widehat{\BP}^* := \BP^* / I(\kappa)$. 
Let $x^{\BP} \in \widetilde{\BP}^2( \CP^{\infty})$ be the standard complex orientation and recall $\BP^*(\CP^{\infty}) \cong \BP^* [[x^{\BP}]]$. 

\begin{lem} \label{formal} Let $\alpha, \kappa \geq 1$  and let  $p^\alpha : \CP^{\infty} \to \CP^{\infty}$ be the map induced by $S^1 \to S^1$, $t \mapsto t^{p^\alpha}$, using the identification $BS^1 = \CP^{\infty}$. 
Let $(p^{\alpha})^* : \BP^*(\CP^{\infty}) \to \BP^*( \CP^{\infty})$ be the induced map in $\BP$-cohomology.
Then in $\widehat{\BP}^* [[x^{\BP}]]$ we obtain the equation 
\begin{equation*} 
    (p^\alpha)^*(x^{\BP}) = p^{\alpha}  \cdot x^{\BP}  + p^{\alpha-1} \cdot v_\kappa \cdot ( x^{\BP})^{p^\kappa} + R_{\alpha} \, ,  
\end{equation*} 
where $R_{\alpha} \in p^{\alpha} \cdot \widehat{\BP}^*[[x^{\BP}]]$. 
\end{lem}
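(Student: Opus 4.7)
The plan is to recognize $(p^\alpha)^*(x^{\BP})$ as the $p^\alpha$-series $[p^\alpha]_F(x^{\BP})$ of the formal group law $F$ associated to the complex orientation $x^{\BP}$ of $\BP$; this uses only the fact that $p^\alpha \colon \CP^\infty \to \CP^\infty$ represents the $p^\alpha$-power in the $H$-space $BS^1 = \CP^\infty$. I would then induct on $\alpha$ via the recursion $[p^\alpha]_F = [p]_F \circ [p^{\alpha-1}]_F$, writing $x$ for $x^{\BP}$ throughout.

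For the base case $\alpha = 1$, I would start from the standard BP-theoretic $p$-series formula $[p]_F(x) = p x +_F v_1 x^p +_F v_2 x^{p^2} +_F \cdots$ in $\BP^*[[x]]$. Reducing modulo $I(\kappa)$ kills every $v_i$ with $i \neq \kappa$ and imposes $v_\kappa^2 = 0$, so in $\widehat{\BP}^*[[x]]$ we have $[p]_F(x) \equiv p x +_F v_\kappa x^{p^\kappa}$. Expanding the formal sum $a +_F b = a + b + \sum_{i,j \geq 1} c_{ij} a^i b^j$ with $c_{ij} \in \BP^*$ and using $v_\kappa^2 = 0$ to annihilate every term with $j \geq 2$, one obtains
\[
[p]_F(x) = p x + v_\kappa x^{p^\kappa} + v_\kappa x^{p^\kappa} \sum_{i \geq 1} c_{i1} p^i x^i,
\]
which is the claimed shape with $R_1 \in p\,\widehat{\BP}^*[[x]]$.

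For the inductive step, write $y := [p^{\alpha-1}]_F(x) = p^{\alpha-1} x + p^{\alpha-2} v_\kappa x^{p^\kappa} + R_{\alpha-1}$ with $R_{\alpha-1} \in p^{\alpha-1}\widehat{\BP}^*[[x]]$, and plug $y$ into the formula for $[p]_F$ just derived. The linear-in-$y$ part gives $py = p^\alpha x + p^{\alpha-1} v_\kappa x^{p^\kappa} + pR_{\alpha-1}$, and $pR_{\alpha-1}$ lies in $p^\alpha \widehat{\BP}^*[[x]]$. To handle $v_\kappa y^{p^\kappa}$, decompose $y = y_0 + v_\kappa y_1$ with $y_0, y_1 \in \Z_{(p)}[[x]]$; then $y_0 \in p^{\alpha-1}\Z_{(p)}[[x]]$, and modulo $v_\kappa^2$
\[
v_\kappa y^{p^\kappa} \equiv v_\kappa y_0^{p^\kappa} \in p^{(\alpha-1)p^\kappa} v_\kappa \widehat{\BP}^*[[x]].
\]
The remaining cross-correction $v_\kappa y^{p^\kappa} \sum_{i\geq 1} c_{i1} p^i y^i$ inherits an even higher power of $p$.

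The one quantitative point, and really the only obstacle beyond bookkeeping, is the inequality $(\alpha-1)p^\kappa \geq \alpha$ for $\alpha \geq 2$, $\kappa \geq 1$; this is equivalent to $\alpha(p^\kappa-1) \geq p^\kappa$, which is immediate since $p^\kappa \geq 2$ and $\alpha \geq 2$. Granting this, every non-leading contribution lands in $p^\alpha \widehat{\BP}^*[[x]]$, so collecting them as $R_\alpha := pR_{\alpha-1} + v_\kappa y^{p^\kappa} + v_\kappa y^{p^\kappa}\sum c_{i1}p^i y^i$ closes the induction. The work is then purely organizational: tracking the interaction between the formal-group-law constants $c_{ij}$, the truncations $v_i=0$ for $i\neq \kappa$ and $v_\kappa^2=0$, and the powers of $p$; no deeper structural input is needed beyond the standard form of the $\BP$ $p$-series.
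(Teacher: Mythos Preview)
Your argument is correct and follows the same inductive strategy as the paper: identify $(p^\alpha)^*(x^{\BP})$ with the $p^\alpha$-series of the $p$-typical formal group law, use $[p^\alpha]_F = [p]_F \circ [p^{\alpha-1}]_F$, and control the powers of $p$ using $v_\kappa^2 = 0$. The only difference is in the base case: the paper observes that in $\widehat{\BP}^*$ every coefficient $c_{ij}$ of the formal group law (for $i,j\geq 1$) has positive degree and hence lies in $\Z_{(p)}\cdot v_\kappa$, so the correction terms $c_{ij}(px)^i(v_\kappa x^{p^\kappa})^j$ all vanish and one gets $R_1 = 0$ exactly; you stop one step short and only record $R_1 \in p\,\widehat{\BP}^*[[x]]$, which forces you to carry an extra correction term through the induction, but your bookkeeping handles it without difficulty.
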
 

\begin{proof} We use induction on $\alpha$. 
We write $x$ instead of $x^{\BP}$ and carry out the following computations in  $\widehat{\BP}^* [[x]]$. 
For $\alpha =1$ the $p$-typical formal group law of $\BP$ yields $(p)^*(x) = p x +  v_\kappa \cdot x^{p^\kappa}$ (possibly after multiplying $v_\kappa$ with a unit in $\Z_{(p)}$). 
Hence the assertion holds for $\alpha = 1$. 

Using $v_\kappa^2 = 0 \in \widehat{\BP}^* $ we  inductively obtain, for $\alpha \geq 1$,
\begin{align*} 
    (p^{\alpha+1})^*(x) & =  (p)^* ( (p^{\alpha})^* (x))  = \\
   & = p \cdot \big( p^{\alpha}  \cdot x + p^{\alpha-1} v_\kappa \cdot x^{p^\kappa}  + R_{\alpha} \big) + v_\kappa \cdot \big( p^{\alpha}  \cdot x + p^{\alpha-1} v_\kappa \cdot x^{p^\kappa}  + R_{\alpha}  \big)^{p^\kappa}  \\ 
   &  =  p^{\alpha+1} \cdot x + p^{\alpha} \cdot v_\kappa \cdot x^{p^\kappa} + R_{\alpha+1} \, , 
 \end{align*} 
where $R_{\alpha+1} := p\cdot R_{\alpha} + v_\kappa \cdot \big( p^{\alpha}  \cdot x + p^{\alpha-1} v_\kappa \cdot x^{p^\kappa}  + R_{\alpha}  \big)^{p^\kappa} \in p^{\alpha+1} \cdot \widehat{\BP}^*[[x^{\BP}]] $. 
\end{proof}

\begin{conv} \label{standingconv} Let  $x \in \HH^2( \CP^{\infty}; \Z)$ be the complex orientation induced by $x^{\BP}$ and for  $m \geq 0$ let $y_{2m} \in \HH_{2m}( \CP^{\infty}; \Z)$ be the generator dual to $x^m$.
For  any commutative ring $R$ with unit we obtain an  exact Gysin sequence 
\[
    \cdots  \longrightarrow \HH_*( BG_{\alpha}; R ) \stackrel{\pi_*}{\longrightarrow}  \HH_{*} ( \CP^{\infty}; R) \stackrel{- \cap e}{\longrightarrow} \HH_{* - 2} ( \CP^{\infty} ; R) \stackrel{\tau_*}{\longrightarrow}  \HH_{*-1} (  BG_{\alpha}; R ) \longrightarrow  \cdots 
\]
where $e  = p^{\alpha}  \cdot x \in \HH^2( \CP^{\infty}; \Z)$ is the Euler class of the $S^1$-principal fiber bundle 
\[
   S^1\hookrightarrow BG_{\alpha} \stackrel{\pi}{\to} \CP^{\infty}
\]
equipped with its canonical orientation induced by the inclusion $S^1 \subset \C$, and  $\tau_*$ is the homological transfer in this fiber bundle.

For $m \geq 0$ we obtain specific generators $c_{2m+1} = \tau_{2m}(y_{2m}) \in \HH_{2m+1}(BG_{\alpha}; \Z) \cong \Z/p^\alpha$ and $c_{2m} \in \HH_{2m}( BG_{\alpha} ; \Z/p^{\alpha}) \cong \Z/p^{\alpha}$ with  
\[
    \pi_*(c_{2m}) = y_{2m} \in \HH_{2m}( \CP^{\infty} ; \Z/p^\alpha).
\]
Furthermore, for $ 1 \leq \ell \leq \alpha$, these generators induce generators $c_d \in \HH_*(BG_{\alpha}; \Z/p^\ell)$ for $d \geq 0$. 

We can and will assume that the generators 
\[
   c_d \in C(\alpha)_d  = \HH_d( ( BG_\alpha)^{(d)} / (BG_\alpha)^{(d-1)}; \Z)
\]
of the cellular chain complex introduced at the beginning of Section \ref{hom_p_1} map to these  specific generators of $\HH_{d} ( BG_{\alpha}  ; \Z/p^{\ell} )$  after passing to  coefficients $\Z/p^\ell$. 
\end{conv} 

\begin{prop} \label{compute_diff} Let $\alpha , \kappa \geq 1$ and $1 \leq \ell \leq \alpha$. 
Then  the operation 
\[
   \partial^{(\kappa, \ell)} : \HH_*(BG_{\alpha} ; \Z/p^{\ell} ) \to \HH_{*- 2p^\kappa+1} (BG_{\alpha} ; \Z/p^{\ell} ),
\]
viewed as a map $C(\alpha)_* \otimes \Z/p^\ell \to C(\alpha)_{*-2p^{\kappa} +1} \otimes \Z/p^\ell$, is given by 
\[ 
  \partial^{(\kappa, \ell)}  (   c_{d}  )  =   \begin{cases} p^{\alpha-1} \cdot c_{d - 2p^\kappa +1}  &  \text{ for even }  d \geq 2p^\kappa \text{ and } \ell = \alpha\, , \\ 
                                            0 & \textrm{ otherwise } \, . 
                 \end{cases}
\]
\end{prop}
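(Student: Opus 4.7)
My plan is to combine naturality and stability of $\partial^{(\kappa,\ell)}$ with the $\BP^{(\kappa,\ell)}$-theoretic Gysin sequence of the $S^1$-bundle $S^1\hookrightarrow BG_\alpha\xrightarrow{\pi}\CP^\infty$ from Convention \ref{standingconv}, whose Euler class is determined by Lemma \ref{formal}. I will first dispose of odd-degree generators and the subcase $\ell<\alpha$, and then extract the non-trivial differential in the subcase $\ell=\alpha$.

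The odd case is a naturality argument. The Atiyah-Hirzebruch spectral sequence converging to $\BP^{(\kappa,\ell)}_*(\CP^\infty)$ has all of its $E^2$-entries in even total degree, so $\partial^{(\kappa,\ell)}$ vanishes on $\HH_*(\CP^\infty;\Z/p^\ell)$ for dimension reasons. The transfer $\tau_*$ is induced by the stable cofibre sequence associated to the $S^1$-bundle, and therefore commutes with the stable operation $\partial^{(\kappa,\ell)}$. Since $c_{2m+1}=\tau_*(y_{2m})$ by Convention \ref{standingconv}, this immediately forces $\partial^{(\kappa,\ell)}(c_{2m+1})=0$.

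For even generators, I would compute the $\BP^{(\kappa,\ell)}$-theoretic Euler class $e^{(\kappa,\ell)}$ by reducing the formula of Lemma \ref{formal} modulo the ideal $I(\kappa,\ell)$. Since $p^\alpha$ and the remainder $R_\alpha\in p^\alpha\,\widehat{\BP}^*[[x^{\BP}]]$ both vanish mod $p^\ell$ whenever $\ell\le\alpha$, one obtains $e^{(\kappa,\ell)}=0$ for $\ell<\alpha$, and $e^{(\kappa,\alpha)}=p^{\alpha-1}\,v_\kappa\,(x^{\BP})^{p^\kappa}$. In the subcase $\ell<\alpha$ the vanishing of $e^{(\kappa,\ell)}$ splits the Gysin sequence into short exact sequences, and $\BP^{(\kappa,\ell)}_*(BG_\alpha)$ has exactly the cardinality predicted by the $E^2$-page of the AHSS; the spectral sequence therefore collapses and $\partial^{(\kappa,\ell)}(c_{2m})=0$. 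In the subcase $\ell=\alpha$, the $\Z/p^\alpha$-module $\BP^{(\kappa,\alpha)}_{2m}(\CP^\infty)$ has basis $y_{2m}^{\BP},\,v_\kappa\, y_{2m-2p^\kappa+2}^{\BP}$, and cap product with $e^{(\kappa,\alpha)}$ sends $y_{2m}^{\BP}$ to $p^{\alpha-1}v_\kappa\, y_{2m-2p^\kappa}^{\BP}$ while annihilating the second generator (because $v_\kappa^2=0$). Hence $y_{2m}^{\BP}$ is not in the image of $\pi_*$, whereas $p\cdot y_{2m}^{\BP}$ is, since $p^\alpha\equiv 0 \bmod p^\alpha$. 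Translated back into the AHSS, $c_{2m}$ has non-zero image under $d^{2p^\kappa-1}$ while $p\cdot c_{2m}$ lies in its kernel, so $\partial^{(\kappa,\alpha)}(c_{2m})$ generates the order-$p$ subgroup of $\Z/p^\alpha\cdot c_{2m-2p^\kappa+1}$.

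The main obstacle will be identifying this generator as precisely $p^{\alpha-1}c_{2m-2p^\kappa+1}$ and not merely some unit multiple of it. The generators $c_d$ are normalised by Convention \ref{standingconv} via $\pi_*(c_{2m})=y_{2m}$ and the chosen complex orientation $x=x^{\BP}\bmod p^\alpha$, while the coefficient $p^{\alpha-1}v_\kappa$ in $e^{(\kappa,\alpha)}$ is the one delivered by Lemma \ref{formal}. Tracing these compatible choices through the standard identification of $d^{2p^\kappa-1}$ with the $v_\kappa$-component of cap product by the Euler class should pin down the coefficient as $p^{\alpha-1}$ on the nose.
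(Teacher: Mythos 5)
Your overall strategy is the paper's own: compute the $\BP^{(\kappa,\ell)}$-theoretic Euler class of $S^1\hookrightarrow BG_\alpha\xrightarrow{\pi}\CP^\infty$ from Lemma \ref{formal} and read off $\partial^{(\kappa,\ell)}$ by comparing the resulting Gysin data with the Atiyah--Hirzebruch differential. Your computation $e=0$ for $\ell<\alpha$ and $e=p^{\alpha-1}v_\kappa (x^{\BP})^{p^\kappa}$ for $\ell=\alpha$ is right, the order-counting collapse argument for $\ell<\alpha$ works, and the use of $\ker(-\cap e)=\operatorname{im}\pi_*$ to show that $c_{2m}$ does not lift to $\BP^{(\kappa,\alpha)}$ while $p\cdot c_{2m}$ does is sound. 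Two steps, however, are not yet proofs as written.

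First, the odd-degree case: the transfer $\tau_*$ of Convention \ref{standingconv} is the connecting map of the stable cofibration $BG_{\alpha+}\to\CP^\infty_+\to\Th(L)$ (with $L$ the line bundle of Euler class $p^\alpha x$) composed with the Thom isomorphism, and a stable operation need \emph{not} commute with a Thom isomorphism -- that failure is measured by characteristic classes and is precisely the mechanism that makes $\partial^{(\kappa,\alpha)}$ nonzero on even classes in the first place. So ``therefore commutes with the stable operation'' is not a valid deduction. It is easily repaired: $\tilde\HH_*(\Th(L);\Z/p^\ell)$ is concentrated in even degrees, so $\partial^{(\kappa,\ell)}$ vanishes on it for the same parity reason as on $\CP^\infty$, and commuting with the connecting map alone then gives $\partial^{(\kappa,\ell)}(c_{2m+1})=\partial^{(\kappa,\ell)}\tau_*(y_{2m})=0$; alternatively, as in the paper, the transfer row of the two-row spectral sequence of the circle bundle supports no outgoing differentials. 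Second, the step you defer as the ``main obstacle'' -- getting $p^{\alpha-1}c_{2m-2p^\kappa+1}$ on the nose rather than a unit multiple -- is exactly the content the paper supplies: it identifies the $d^2$ of the circle-bundle spectral sequence, $c\mapsto (c\cap e)\otimes[S^1]$, with the AHSS differential $d^{2p^\kappa-1}$ under the specific isomorphisms given by $\pi_*$ on the $0$-row and $\tau_*$ on the $1$-row; combined with Convention \ref{standingconv} ($\pi_*(c_{2m})=y_{2m}$, $c_{2m+1}=\tau_*(y_{2m})$) and the normalization of $v_\kappa$ built into Lemma \ref{formal}, this fixes the coefficient, signs included. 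Until that identification is actually carried out, your argument establishes only $\partial^{(\kappa,\alpha)}(c_{2m})=u\cdot p^{\alpha-1}\,c_{2m-2p^\kappa+1}$ for some unit $u\in(\Z/p^\alpha)^\times$, which is weaker than the stated formula.
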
 

\begin{proof} Using the canonical isomorphism   
\[
   \HH_*(\CP^{\infty} ; \BP_*^{(\kappa, \ell)}( S^1) )  \cong \BP^{(\kappa, \ell)}_*(\CP^{\infty}; \HH_*(S^1))
\]
 we may write the $\BP^{(\kappa, \ell)}$-homology spectral sequence for the fiber bundle  $S^1 \hookrightarrow BG_{\alpha} \to \CP^{\infty}$  as
\[
    E^2_{s,t} =  \BP^{(\kappa, \ell)}_s(\CP^{\infty}; \HH_t(S^1))  \Longrightarrow  \BP^{(\kappa, \ell)}_{s+t}( B G_{\alpha} )  \, .  
\]
The only nonvanishing differential  is given by $\partial^2 : E^2_{s,0} \to E^2_{s-2, 1}$, $c \mapsto  (c \cap e) \otimes [S^1] $, where  $c \in \BP_s^{(\kappa, \ell)}(\CP^{\infty})$, $e \in ( \BP^{(\kappa, \ell )})^2 ( \CP^{\infty})$ is the  $\BP^{(\kappa, \ell)}$ theoretic Euler class of $S^1 \hookrightarrow BG_{\alpha} \to \CP^{\infty}$, and $[S^1] \in \HH_1(S^1 ; \Z)$ is the given orientation class. 
Note that this  spectral sequence induces the $\BP^{(\kappa, \ell)}$-theoretic Gysin sequence for the fiber bundle $S^1 \hookrightarrow BG_{\alpha} \to \CP^{\infty}$. 

Let $x \in (\BP^{(\kappa, \ell)})^2(\CP^{\infty})$ be the class induced by $x^{\BP}$. 
Viewing $v_{\kappa} \in  \BP^{(\kappa, \ell)}_{2p^{\kappa} - 2}  \cong (  \BP^{(\kappa, \ell)} )^{-2p^{\kappa} + 2}$ Lemma \ref{formal} implies 
\begin{equation} \label{euler} 
      e  = \begin{cases} p^{\alpha-1} \cdot v_\kappa \cdot x^{p^\kappa}  \mbox{ for } \ell = \alpha \, , \\ 
                                                                                                                  0 \mbox{ for } 1 \leq \ell < \alpha \, . \end{cases}  
\end{equation}                    
Now consider  the isomorphisms
\begin{align*} 
    E^2_{*,0}  & \cong  \HH_{*}  (\CP^\infty ; \BP^{(\kappa, \ell)}_{\rm ev} ( S^1) ) \cong   \HH_{\rm ev}(BG_\alpha; \Z/p^{\ell}) \otimes \langle 1, v_{\kappa} \rangle_{\Z/p^{\ell}}  \, , \\
    E^2_{*, 1} & \cong   \HH_* (\CP^\infty ; \BP^{(\kappa, \ell)}_{\rm odd} ( S^1 ) ) \cong   \HH_{\rm odd}(BG_\alpha; \Z/p^{\ell}) \otimes \langle 1, v_{\kappa} \rangle_{\Z/p^{\ell}} \, , 
 \end{align*} 
 the first one of which is induced by the projection $BG_{\alpha}\to \CP^{\infty}$ and the second one by the homological transfer $\tau_*$ for the bundle $S^1 \hookrightarrow BG_{\alpha} \to \CP^{\infty}$. 
 Under these isomorphisms the differential $\partial^2 : E^2_{s,0} \to E^2_{s-2,1}$, $c \mapsto (c \cap e) \otimes [S^1]$, corresponds to the differential 
 \[
      \partial^{2p^{\kappa} -1} :   \HH_{s}( BG_{\alpha} ;  \BP^{(\kappa, \ell)}_t) \to \HH_{s - 2p^{\kappa} +1} ( BG_\alpha; \BP^{(\kappa, \ell)}_{t+2p^{\kappa} - 2}  ) 
 \]
in the Atiyah-Hirzebruch spectral sequence $E^2_{s,t} = \HH_{s}(BG_{\alpha} ; \BP_t^{(\kappa, \ell)} )  \Longrightarrow \BP^{(\kappa, \ell)}_{s+t} ( BG_\alpha)$.
Since this differential defines $\partial^{(\kappa, \ell)}$, Proposition \ref{compute_diff} follows. 
\end{proof} 

\begin{ex}  \label{beispiel} Let $\alpha, \kappa \geq 1$ and $\Gamma = (BG_{\alpha})^2$. 
Then in $\HH_*( B\Gamma ; \Z/p^{\alpha})$ we get
\[
    \partial^{(\kappa, \alpha)} ( \mathscr{T}(c_1, c_{2p^{\kappa} -1}) )  =  \partial^{(\kappa, \alpha)} ( c_2 \otimes c_{2p^{\kappa}-1} + c_1 \otimes c_{2p^{\kappa}} ) = p^{\alpha-1} \cdot (c_1 \otimes c_1 ) \neq 0, 
 \]
and hence the cycle $\mathscr{T}(c_1, c_{2p^{\kappa} -1} )  \in C(\alpha)_* \otimes C(\alpha)_*$ appearing in Question \ref{openprob} does not lift to $\Omega^{\SO}_{2p^{\kappa} +1} (B \Gamma)$. 
For $p = 3$ and $\alpha, \kappa = 1$ this reproduces the class in $\HH_7( B (\Z/3)^2 ; \Z)$ considered  in  \cite{Thom54}*{page 62}, which was the first example of an integral homology class  that cannot be represented by a smooth manifold. 
\end{ex} 

For a topological space $X$ and $\ell \geq 1$ we denote by $\beta^{(\ell)} :\HH_*(X  ; \Z/p^{\ell}) \to \HH_{* -1}(X ; \F_p)$ the Bockstein operation for the exact coefficient sequence $0 \to \Z/p \stackrel{\cdot p^{\ell}}{\rightarrow} \Z/p^{\ell+1} \to \Z/p^\ell \to 0$. 
Note that $\beta^{(\ell)}$ vanishes on classes that lift to integral homology. 

\begin{defn} \label{RH} Let $\ell \geq 1$. 
We call the submodule
\[
    \RH_*( X ; \Z/p^\ell) :=\ker \beta^{(\ell)}  \, \cap \, \bigcap_{\kappa \geq 1} \,   \ker  \partial^{(\kappa, \ell)}  \subset \HH_*(X ; \Z/p^\ell)  
\]
 the {\em almost representable homology} in $\HH_*(X ; \Z/p^\ell)$.
\end{defn} 

Let $\alpha, n  \geq 1$ and $\Gamma = (G_{\alpha})^n$.
It follows from Proposition \ref{compute_diff}  that  $p \cdot \HH_*( B\Gamma; \Z/p^\alpha)$ is contained in $\RH_*(B\Gamma; \Z/p^{\alpha})$. 
The same holds for the image of $\Omega^{\SO}_*(B \Gamma) \to \HH_*(B\Gamma; \Z/p^{\alpha})$. 
In Proposition \ref{productlens} we will show a weak converse of the last statement.
We first define specific elements  in the group homology $\HH_*(B \Gamma; \F_p)$, which are represented by smooth manifolds. 

\begin{defn} For $m \geq 1$ denote by $L^{2m-1} = S^{2m-1} / (\Z/p^{\alpha})$ the standard $\Z/p^{\alpha}$-lens space. 
Let $1 \leq k \leq n$ and let $\phi: (G_{\alpha})^k \to (G_{\alpha})^n$ be some group homomorphism. 
For $m_1, \ldots, m_k \geq 1$ we obtain the map 
\[
   \Phi:   L^{2m_1-1} \times \cdots \times L^{2m_k-1} \stackrel{\Psi}{\lra}  B(G_\alpha)^k \stackrel{B\phi}{\longrightarrow}  B(G_{\alpha})^n = B \Gamma\, , 
\]
where  $\Psi$ is the product of classifying maps. 

The class $\Phi_*( [ L^{2m_1-1} \times \cdots \times L^{2m_k-1}] ) \in \HH_*(B \Gamma; \F_p)$  is called a {\em generalized product of lens spaces}. 
Obviously this element lifts to $\Omega^{\SO}_*(B \Gamma)$. 
\end{defn} 

We can now state the main result of this section. 

\begin{prop} \label{productlens} The image of  $\RH_*( B \Gamma ; \Z/p^\alpha) \hookrightarrow \HH_*(B \Gamma ; \Z/p^{\alpha}) \to \HH_*(B\Gamma; \F_p)$  is  generated by generalized products of $\Z/p^{\alpha}$-lens spaces. 
\end{prop}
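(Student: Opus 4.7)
My plan is to reduce the proposition to an explicit combinatorial claim on the Künneth model $\HH_*(B\Gamma; \Z/p^\alpha) \cong C(\alpha)_*^{\otimes n}$ and then to construct the required classes using the diagonal formula of Lemma \ref{diag} and the scaling formula of Proposition \ref{compute}. The first step is to identify the image of $\RH_*(B\Gamma; \Z/p^\alpha) \to \HH_*(B\Gamma; \Fp)$ with the subspace
\[
   K \ := \ \ker \bar{\beta} \ \cap \ \bigcap_{\kappa \geq 1} \ker \bar{\partial}_\kappa \ \subset \ \HH_*(B\Gamma; \Fp),
\]
where $\bar{\beta}$ and $\bar{\partial}_\kappa$ are the graded derivations on $\HH_*(B\Gamma; \Fp)$ determined by $\bar{\beta}(c_{2m}) = c_{2m-1}$ and $\bar{\partial}_\kappa(c_{2m}) = c_{2m-2p^\kappa+1}$ (for $2m \geq 2p^\kappa$, else zero), each vanishing on odd generators. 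The inclusion ``image $\subseteq K$'' is built into the definition of $\RH$. For the reverse, one observes by Proposition \ref{compute_diff} that $\partial^{(\kappa,\alpha)}$ takes values in $p^{\alpha-1} C(\alpha)_*^{\otimes n}$, so two $\Z/p^\alpha$-lifts of the same $\Fp$-class $\bar{x}$ differ by an element of $p \cdot C(\alpha)_*^{\otimes n}$ annihilated by $p^{\alpha-1}$; a parallel argument applies to $\beta^{(\alpha)}$, so $\bar{x}$ lifts into $\RH$ precisely when $\bar{x} \in K$.

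The inclusion $V \subseteq K$, with $V$ denoting the $\Fp$-span of generalized products of $\Z/p^\alpha$-lens spaces, is automatic: every generalized product is represented by a closed oriented smooth manifold, hence lies in the image of $\Omega_*^{\SO}(B\Gamma) \to \HH_*(B\Gamma; \Z) \to \HH_*(B\Gamma; \Fp)$, on which both $\beta^{(\alpha)}$ and $\partial^{(\kappa,\alpha)}$ vanish (as noted after the construction of $\partial^{(\kappa, \ell)}$).

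The main work lies in the reverse inclusion $K \subseteq V$. I would realize each element of $K$ as an explicit $\Fp$-combination of generalized products. Pure-odd cellular tensors $c_{2m_1-1}^{(i_1)} \otimes \cdots \otimes c_{2m_k-1}^{(i_k)}$ with distinct $i_j$ come directly from products of lens spaces mapped via coordinate inclusions. Mixed elements of $K$---for instance sums of the form $c_{2m}^{(i)} c_{2m'-1}^{(j)} \pm c_{2m-1}^{(i)} c_{2m'}^{(j)}$, forced into $\ker \bar{\beta}$ by the derivation property, together with their higher analogues in $\ker \bar{\partial}_\kappa$---I would reach using the iterated diagonal $G_\alpha \to G_\alpha^k$ of Lemma \ref{diag} composed with scaling homomorphisms $g \mapsto g^\lambda$ from Proposition \ref{compute}. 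Expanding $\Phi_*([L^{2m-1}])$ for such a $\Phi$ yields an explicit polynomial in the matrix entries $a_{ij}$ of the underlying $\phi : G_\alpha^k \to G_\alpha^n$, with coefficients in the cellular basis; varying the $a_{ij}$ and inverting a Vandermonde-type system---together with the Pontryagin-product closure of $V$ (a Pontryagin product of two generalized products is itself a generalized product, obtained by concatenating matrices and composing with the group multiplication on $\Gamma$)---then isolates each basis element of $K$ in turn.

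The main obstacle I foresee is carrying out the Vandermonde inversion in characteristic $p$: when the exponents $m_j$ are large, the polynomials in the $a_{ij}$ arising from Lemma \ref{diag} and Proposition \ref{compute} have degrees exceeding $p-1$ in individual variables, so the naive Vandermonde matrices over $\Fp$ become singular. I expect to circumvent this by enlarging $k$, adding auxiliary $L^1$ factors to gain further independent scaling parameters, and by exploiting the Pontryagin closure of $V$ to combine contributions from different choices of $\phi$, rather than extracting a single basis element of $K$ from a single orbit of scalings.
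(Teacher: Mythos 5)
Your first step (identifying the image of $\RH_*(B\Gamma;\Z/p^{\alpha})$ in $\HH_*(B\Gamma;\F_p)$ with the explicit kernel $K$ of the mod-$p$ derivations) and the easy inclusion of the lens-space span $V$ into $K$ agree in substance with the paper's Proposition \ref{impliso}, and your Vandermonde device with the homomorphisms $(x_1,\ldots,x_n)\mapsto(x_1,\ldots,x_n,\lambda_1x_1+\cdots+\lambda_nx_n)$ is exactly the paper's Proposition \ref{Vandermonde}. But that step only yields a \emph{lower} bound: it shows the lens-space span surjects onto the truncated piece $(N_*)^n\otimes L_{<p^n}$ of the part of $K$ having a single even-degree tensor factor. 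The heart of the proof is the converse, namely the determination of $K=\bigcap_{\kappa}\ker\partial^{(\kappa)}$: one must show that after subtracting the Vandermonde-realized part, what remains of an arbitrary element of $K$ drops into a smaller tensor factor so that an induction closes. In the paper this is carried out by the inductive analysis of Proposition \ref{difficult} (construction of the preimages as in \eqref{preimage}, the dimension count identifying $\ker\overline{\partial^{(n)}}$ with $(N_*)^n\otimes L_{<p^n}$, and the identity $\mathscr{D}^{n+1,n}_*=\mathscr{D}^{n+1,\infty}_*$), Corollary \ref{clear}, and Proposition \ref{description}. Your proposal never computes $K$: the plan to ``isolate each basis element of $K$ in turn'' presupposes a basis that is nowhere exhibited, and the sample elements you write down are not actually in $K$ --- a two-term Bockstein-closed sum $c_{2m}\otimes c_{2m'-1}\pm c_{2m-1}\otimes c_{2m'}$ generically fails $\ker\bar\partial_\kappa$; genuine elements carry correction terms for every $\kappa$ simultaneously, and their existence and range is precisely what Proposition \ref{difficult} controls.

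Moreover, the obstacle you flag --- degeneration of the Vandermonde system in characteristic $p$ when the even exponents are large --- is not a technicality to be engineered around by auxiliary $L^1$ factors or Pontryagin products (the latter are in any case already subsumed, since arbitrary homomorphisms $(G_\alpha)^k\to(G_\alpha)^n$ are allowed in the definition of generalized products). It reflects the actual answer: by Proposition \ref{description} the even-degree factor accompanying $n$ odd factors in $\mathscr{C}^{n+1,\infty}_*$ is genuinely truncated at degree $2(p^n-1)$, and classes such as $\mathscr{T}(c_1,c_{2p^{\kappa}-1})$ are excluded from $K$ by Proposition \ref{compute_diff} (compare Example \ref{beispiel}). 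So what is missing from your argument is exactly the proof that $K$ itself is correspondingly truncated --- equivalently the kernel computation or dimension count replacing Propositions \ref{difficult} and \ref{description} --- without which the inclusion $K\subseteq V$ cannot be concluded.
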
 

\begin{rem} It was observed first in \cite{BR1}*{Theorem 5.6} that generalized products of lens spaces generate the image of $\Omega^{\SO}_*( B (\Z/p)^n) \to  \HH_*(B(\Z/p)^n; \F_p)$. 
A complete  proof of this statement was given in \cite{Ha15}.
Already for $\alpha=1$ Proposition \ref{productlens} is stronger than \cite{BR1}*{Theorem 5.6} as it is not clear a priori that all classes in $ \RH_*( B(\Z/p)^n ; \F_p)$ lift to $\Omega^{\SO}_*(B (\Z/p)^n)$.
\end{rem}

The proof of Proposition \ref{productlens}, which will be given at the end of this section, requires some preparation. 
Our argument is mainly algebraic and in principle carried out in  the reduced homology $\tilde \HH_*( \widehat{B\Gamma} ; \F_p)$, which is reflected in the following notation. 
Let 
\begin{itemize} 
   \item $C_* = \tilde \HH_*( B G_{\alpha} ; \F_p)$ be  the free $\Z$-graded $\F_p$-module with one generator $c_d$ in each degree $d\geq 1$;
   \item $(C_*)^n = \tilde \HH_*( \widehat{B\Gamma} ; \F_p)$ be its $n$-fold tensor product,  with $n \geq 0$; 
   \item  $\partial^{(\kappa)}$, $ \kappa \geq 0$,  be the differential on $(C_*)^n$ of degree $-2p^{\kappa} +1$, which acts as a derivation and satisfies 
\[ 
   \partial^{(\kappa)}   (c_{d}) :=  \begin{cases} c_{d - 2p^\kappa +1} &  \text{ for even }  d  \geq  2p^\kappa - 1 \, , \\
      0 & \text{ otherwise}  \, ;  \end{cases} 
\]
   \item $\mathscr{C}^{n,r}_* := \bigcap_{0 \leq \kappa \leq r} \ker \partial^{(\kappa)} \subset (C_*)^n$ for $r \geq 0$, and $\mathscr{C}^{n,\infty}_*  = \bigcap_{\kappa \geq 0} \ker \partial^{(\kappa)}$.
  \end{itemize}

\begin{prop} \label{impliso} The canonical map  
\[
   (C(\alpha)_* \otimes \Z/p^\alpha)^n \to (\tilde C(\alpha)_*\otimes \F_p )^n = (C_*)^n
\]
sends $ \RH_*(B\Gamma; \Z/p^\alpha)$ onto $ \mathscr{C}^{n,\infty}$. 
\end{prop}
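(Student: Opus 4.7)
The plan is to carry out the argument by a chain-level comparison, exploiting the fact that since $\partial(c_{2m}) = p^\alpha c_{2m-1}$ vanishes modulo $p^\alpha$, the complex $(C(\alpha)_* \otimes \Z/p^\alpha)^n$ has trivial differentials, and is therefore canonically isomorphic as a graded $\Z/p^\alpha$-module to $\HH_*(B\Gamma; \Z/p^\alpha)$, with the cross product on homology corresponding to the tensor product of chains. Write an arbitrary element as $h = \sum_{\vec d} a_{\vec d}\, c_{d_1} \otimes \cdots \otimes c_{d_n}$ with $a_{\vec d} \in \Z/p^\alpha$, and let $u_i$ denote the $i$th unit multi-index. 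Using that $\partial^{(\kappa,\alpha)}$ is a derivation with respect to the cross product (being a differential in the Atiyah--Hirzebruch spectral sequence of the multiplicative theory $\BP^{(\kappa,\alpha)}$) and that $\beta^{(\alpha)}$ satisfies the standard Bockstein Leibniz rule, together with Proposition~\ref{compute_diff} and the direct computation giving $\beta^{(\alpha)}(c_d) = c_{d-1}$ for even $d \geq 2$, one obtains explicit formulas for the coefficient at a basis element $c_{\vec e}$:
\begin{align*}
   \beta^{(\alpha)}(h)_{\vec e} &= \sum_{i:\, e_i\text{ odd}} \pm\, (a_{\vec e + u_i} \bmod p), \\
   \partial^{(\kappa,\alpha)}(h)_{\vec e} &= p^{\alpha-1} \sum_{i:\, e_i\text{ odd}} \pm\, a_{\vec e + (2p^\kappa - 1) u_i}.
\end{align*}

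For the inclusion $r(\RH_*(B\Gamma; \Z/p^\alpha)) \subseteq \mathscr{C}^{n,\infty}$, where $r$ denotes the canonical reduction map, I observe that restricted to multi-indices $\vec e$ with all coordinates $e_j \geq 1$ these formulas coincide, up to sign, with the coefficients of $\partial^{(0)}(r(h))$ and $\partial^{(\kappa)}(r(h))$ respectively. Since the inclusion $\F_p \cong p^{\alpha-1}\Z/p^\alpha \hookrightarrow \Z/p^\alpha$ is injective, the hypothesis $\partial^{(\kappa,\alpha)}(h) = 0$ is equivalent to the vanishing of the inner sums modulo $p$, which then forces $\partial^{(\kappa)}(r(h)) = 0$ for $\kappa \geq 1$; the parallel argument for $\beta^{(\alpha)}$ and $\partial^{(0)}$ completes this direction.

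For the reverse inclusion, given $c = \sum \bar c_{\vec d}\, c_{\vec d} \in \mathscr{C}^{n,\infty}$ I would define a lift $h$ by setting $a_{\vec d} \in \{0, 1, \ldots, p-1\} \subset \Z/p^\alpha$ equal to the standard integer representative of $\bar c_{\vec d}$ whenever all $d_j \geq 1$, and $a_{\vec d} = 0$ otherwise, so that $r(h) = c$ by construction. The formulas above then show that $\beta^{(\alpha)}(h)$ and $\partial^{(\kappa,\alpha)}(h)$ vanish at every $\vec e$: for $\vec e$ in the reduced range this is exactly the hypothesis $c \in \mathscr{C}^{n,\infty}$; for $\vec e$ with some coordinate $e_j = 0$, the summation runs over indices $i$ with $e_i$ odd, so necessarily $i \neq j$, whence $\vec e + s u_i$ still has $j$-th coordinate zero and the associated coefficient $a_{\vec e + s u_i}$ vanishes by construction. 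Hence $h \in \RH_*(B\Gamma; \Z/p^\alpha)$.

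The main technical obstacle is a careful bookkeeping of graded signs in the Leibniz formulas, together with the separation between coefficients supported on the reduced part of $(C(\alpha)_* \otimes \Z/p^\alpha)^n$ and those outside it. Once these are handled, the argument reduces to the elementary algebraic observation that multiplication by $p^{\alpha-1}$ embeds $\F_p$ as the socle of $\Z/p^\alpha$, so that $\partial^{(\kappa,\alpha)}$ (which lands in this socle) vanishes if and only if its mod-$p$ image does.
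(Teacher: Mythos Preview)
Your argument is correct and follows essentially the same approach as the paper's proof. Both identify the operations $\beta^{(\alpha)}$ and $\partial^{(\kappa,\alpha)}$ on $(C(\alpha)_*\otimes\Z/p^\alpha)^n$ as derivations that, after the $p^{\alpha-1}$-socle identification, reduce to $\partial^{(0)}$ and $\partial^{(\kappa)}$ on $(C_*)^n$; the paper phrases this structurally (``the kernel goes onto $\ker\partial^{(\kappa)}$ under tensoring with $\F_p$'' and invokes $p\cdot\HH_*\subset\RH_*$), whereas you carry out the same comparison coefficient-by-coefficient and build the lift explicitly on the reduced support, which is a slightly more hands-on version of the same idea.
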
 

\begin{proof} The Bockstein operation $\beta^{(\alpha)} : \HH_*(BG_\alpha; \Z/p^{\alpha}) \to \HH_{*-1}(BG_\alpha; \F_p)$ is given by  $C(\alpha)_* \otimes \Z/p^\alpha \to C(\alpha)_{*-1} \otimes \F_p$, $c_0 \mapsto 0$, $c_{2m-1} \mapsto 0$, and $c_{2m} \mapsto c_{2m-1}$ for $ m \geq 1$. 
Hence its $n$-fold tensor product derivation restricts to a  map $ (\tilde C(\alpha)_* \otimes \Z/p^\alpha)^n \to (C_*)^n$ whose kernel goes onto $\ker \partial^{(0)} \subset (C_*)^n$ under tensoring the domain with $\F_p$. 

For $\kappa \geq 1$ the computation of $ \partial^{(\kappa, \alpha)} : \HH_*(BG_{\alpha} ; \Z/p^{\alpha} ) \to \HH_{*- 2p^\kappa+1} (BG_{\alpha} ; \Z/p^{\alpha} )$ in Proposition \ref{compute_diff}   implies that its $n$-fold tensor product derivation restricts to a map $ (\tilde C(\alpha)_* \otimes \Z/p^\alpha)^n \to (\tilde C(\alpha)_*\otimes \Z/p^\alpha)^n$  whose kernel goes onto $\ker \partial^{(\kappa)} \subset (C_*)^n$ under tensoring the domain with $\F_p$. 

From these facts  Proposition \ref{impliso}  follows, using $p \cdot \HH_*( B\Gamma; \Z/p^\alpha) \subset \RH_*( B\Gamma ; \Z/p^\alpha)$. 
\end{proof}

We will now analyse the submodule $\mathscr{C}^{n,\infty}_* \subset (C_*)^n$. 
For this aim we define the $\Z$-graded $\F_p$-modules: 
\begin{itemize} 
 \item $N_* := {\rm span} \{ c_{2m-1} \mid m \geq 1 \}  = \tilde \HH_{\rm odd} ( BG_\alpha; \F_p)  \subset C_*$;
 \item $L_* := {\rm span} \{ y_{2m}  \mid m \geq 1 \} = \tilde \HH_{\rm even} ( \CP^{\infty} ; \F_p)$, where $y_{2m}$ are free generators of degree $2m$;
   \item $L_{< p^k} := {\rm span} \{ y_{2m} \mid 1 \leq m < p^k\} \subset L_*$ for $k \geq 0$.
 \end{itemize} 
Note that the canonical projection $C_* \to L_*$,   $c_{2m} \mapsto y_{2m}$, $c_{2m-1} \mapsto 0$ (which on the topological side is induced by $BG_\alpha \to \CP^{\infty}$) commutes with the differentials $\partial^{(\kappa)}$ for $\kappa \geq 0$, which we define as zero on $L_*$. 

Let  $(N_*)^n$ be the $n$-fold tensor product of $N_*$ for $n \geq 0$.
For every $1 \leq k \leq n$ and every group homomorphism $\phi : (G_\alpha)^k \to (G_\alpha)^n = \Gamma$ we obtain an induced map 
\begin{equation} \label{cool} 
 \phi_* : (N_*)^k   \hookrightarrow  \tilde \HH_*( B(G_\alpha)^k  ; \F_p) \stackrel{(B\phi)_*}{\longrightarrow}  \tilde \HH_* (B\Gamma ; \F_p)  \longrightarrow \tilde \HH_*( \widehat{ B\Gamma}; \F_p)  = (C_*)^n \, . 
\end{equation}

\begin{defn} \label{curll} For $n \geq 1$ we set 
\[
   \mathscr{L}_*^{n} :=  {\rm span} \{  \phi_*\big((N_*)^k\big) \mid \phi : (G_\alpha)^k \to (G_\alpha)^n  \text{ group homomorphism}, 1 \leq k \leq n\} \subset (C_*)^n \, . 
\]
\end{defn} 
Since the generators of $N_*$ are represented by $\Z/p^\alpha$-lens spaces we have $\mathscr{L}_*^{n} \subset \mathscr{C}^{n,\infty}_*$ by Proposition \ref{impliso}.
The crucial step for the proof of Proposition \ref{productlens} consists in showing that here equality holds,  see Proposition  \ref{description}.
We first derive a lower bound for the size of $\mathscr{L}_*^{n} \subset \mathscr{C}^{n,\infty}_*$. 

\begin{prop} \label{Vandermonde} For $n \geq 1$ the canonical projection $\mathscr{L}_*^{n+1} \to (N_*)^n \otimes L_{< p^n}$ is surjective.
\end{prop}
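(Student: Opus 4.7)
The plan is to exhibit, for each basis element $T = c_{2m_1-1} \otimes \cdots \otimes c_{2m_n-1} \otimes y_{2m_{n+1}}$ of $(N_*)^n \otimes L_{<p^n}$ (with $m_i \geq 1$ and $1 \leq m_{n+1} < p^n$), an explicit preimage in $\mathscr{L}_*^{n+1}$. The central construction is the family of group homomorphisms $\phi^{\vec\lambda}: G_\alpha^n \to G_\alpha^{n+1}$, $(g_1, \ldots, g_n) \mapsto (g_1, \ldots, g_n, g_1^{\lambda_1} \cdots g_n^{\lambda_n})$, parametrized by $\vec\lambda = (\lambda_1, \ldots, \lambda_n) \in \Z^n$, applied to cycles in $(N_*)^n$.

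First I would compute the induced map in homology. Factoring $\phi^{\vec\lambda} = (\mathrm{id},\mu_{\vec\lambda})\circ \Delta$ with $\mu_{\vec\lambda}(g_1,\ldots,g_n) = g_1^{\lambda_1}\cdots g_n^{\lambda_n}$, one assembles the iterated coproduct from Lemma~\ref{diag}, the scaling factors from Proposition~\ref{compute} applied to each $\psi_{\lambda_i}$, and the Pontryagin product in the Hopf algebra $\HH_*(BG_\alpha;\Fp)\cong \Lambda(c_1)\otimes\Gamma(c_2)$ (odd-by-odd products vanish; even-by-even products are the divided-power multinomials $c_{2a}\cdot c_{2b} = \binom{a+b}{a}c_{2(a+b)}$). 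The projection to $L_{<p^n}$ kills every odd output in the last slot, so only terms with all-even $t_i$ in the second copy of $\Delta_*$ survive; writing $t_i=2\ell_i$, one obtains
\[
\overline{\phi^{\vec\lambda}_*}(c_{2k_1-1}\otimes\cdots\otimes c_{2k_n-1}) \;=\; \sum_{\vec\ell}\ \Bigl(\prod_{i=1}^n \lambda_i^{\ell_i}\Bigr)\ \binom{\sum_i\ell_i}{\ell_1,\ldots,\ell_n}\ B_{\vec k,\vec\ell},
\]
where $B_{\vec k,\vec\ell} := c_{2(k_1-\ell_1)-1}\otimes\cdots\otimes c_{2(k_n-\ell_n)-1}\otimes y_{2\sum\ell_i}$, the sum running over $\vec\ell\in\prod_i\{0,\ldots,k_i-1\}$ with $1\leq\sum\ell_i<p^n$.

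Second, I would select the preimage of $T$ using the base-$p$ expansion $m_{n+1} = \sum_{j=0}^{n-1} d_j p^j$ with $0\leq d_j\leq p-1$. Setting $u_i := d_{i-1}\,p^{i-1}$ and $\vec k := \vec m + \vec u$ makes $T = B_{\vec k,\vec u}$, and Lucas' theorem yields $\binom{m_{n+1}}{u_1,\ldots,u_n}\equiv \prod_j\binom{d_j}{d_j}=1\pmod p$. Hence $T$ appears with $\vec\lambda$-coefficient $\prod_i\lambda_i^{u_i}\equiv\prod_i\lambda_i^{d_{i-1}}\pmod p$ (using $\lambda^{p^{i-1}}=\lambda$ in $\Fp$). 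Varying $\vec\lambda\in\Fp^n$, the $p^n$ Vandermonde monomials $\prod\lambda_i^{a_i}$ for $\vec a\in\{0,\ldots,p-1\}^n$ form an $\Fp$-basis of the function space $\Fp^n\to\Fp$, and a Vandermonde-inverse $\Fp$-linear combination of the $\overline{\phi^{\vec\lambda}_*}$'s isolates the Vandermonde class $\vec a=(d_0,\ldots,d_{n-1})$.

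The main obstacle I expect is that other tuples $\vec\ell\neq\vec u$ can lie in the same Vandermonde class, have nonzero multinomial coefficient, and still be in range $\ell_i\leq k_i-1$, producing extra basis elements $B_{\vec k,\vec\ell}\neq T$ whose coefficients remain tied to that of $T$ under $\Fp$-Vandermonde. I would resolve this by running the argument in tandem with a downward induction on a suitable monomial order on $(m_1,\ldots,m_n,m_{n+1})$: each unwanted $B_{\vec k,\vec\ell}$ is itself a basis element of $(N_*)^n\otimes L_{<p^n}$ strictly simpler than $T$ in the chosen order, hence already in the image by the inductive hypothesis and therefore subtractable. Choosing an order that is well-founded and that genuinely makes every unwanted $B_{\vec k,\vec\ell}$ simpler than $T$ — perhaps lexicographic in $(m_{n+1},m_1,\ldots,m_n)$ combined with the fact that nontrivial overlaps in a Vandermonde class force a strictly smaller $m_{n+1}$ for at least one contributor via a digit-shift — is the delicate technical core of the proof.
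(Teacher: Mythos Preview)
Your setup matches the paper's: both use the homomorphisms $\phi_{\vec\lambda}\colon (G_\alpha)^n \to (G_\alpha)^{n+1}$, $(g_1,\ldots,g_n)\mapsto (g_1,\ldots,g_n,\,g_1^{\lambda_1}\cdots g_n^{\lambda_n})$ with $\vec\lambda\in\{0,\ldots,p-1\}^n$, and your homology formula for the projection of $(\phi_{\vec\lambda})_*$ to $(N_*)^n\otimes L_*$ is correct. The divergence is in how the linear algebra is organized.

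The paper works in $\F_p$-\emph{cohomology}, where $\HH^*(B(G_\alpha)^{n+1};\F_p)\cong \F_p[t_1,\ldots,t_{n+1}]\otimes\Lambda(s_1,\ldots,s_{n+1})$ and $(B\phi_{\vec\lambda})^*$ sends $t_{n+1}\mapsto \lambda_1 t_1+\cdots+\lambda_n t_n$. Restricted to the cohomological analogue $(N^*)^n\otimes\langle 1,t_{n+1},\ldots,t_{n+1}^{p^n-1}\rangle$, the direct sum $\bigoplus_{\vec\lambda}(B\phi_{\vec\lambda})^*$ is governed by the $p^n\times p^n$ Vandermonde matrix
\[
V=\Bigl((\lambda_1 t_1+\cdots+\lambda_n t_n)^{\nu}\Bigr)_{\vec\lambda,\;0\le \nu<p^n}
\]
with entries in $\F_p[t_1,\ldots,t_n]$. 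Its determinant $\prod_{\vec\lambda<\vec\mu}\bigl((\mu_1-\lambda_1)t_1+\cdots+(\mu_n-\lambda_n)t_n\bigr)$ is a product of nonzero linear forms, hence nonzero in the polynomial ring. This gives injectivity of the cohomology map in one stroke; $\F_p$-dualization then yields surjectivity of $\sum_{\vec\lambda}(\phi_{\vec\lambda})_*$ onto $(N_*)^n\otimes L_{<p^n}$.

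By contrast, your Vandermonde is taken over $\F_p$ itself, in the monomial functions $\vec\lambda\mapsto\prod_i\lambda_i^{a_i}$ with $a_i\in\{0,\ldots,p-1\}$. Because $\lambda_i^{\ell_i}$ collapses (via $\lambda^p=\lambda$) to its residue class, distinct $\vec\ell$ can share a Vandermonde class with nonzero multinomial coefficient, forcing the secondary induction you describe. You flag this as the ``delicate technical core'' but leave it open, and there is a real difficulty: your suggested lexicographic order in $(m_{n+1},m_1,\ldots,m_n)$ does \emph{not} work as stated, since for fixed $\vec k$ the competing $\vec\ell$ in the same class can have $\sum\ell_i$ either larger or smaller than $\sum u_i=m_{n+1}$, and the individual $k_i-\ell_i$ can move in either direction as well (e.g.\ for $i\ge 2$ one may take $\ell_i=d_{i-1}<u_i=d_{i-1}p^{i-1}$). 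No well-founded order making every unwanted $B_{\vec k,\vec\ell}$ strictly smaller than $T$ is evident.

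The paper's passage to the polynomial ring is exactly the device that keeps the $p^n$ evaluation points $\lambda_1 t_1+\cdots+\lambda_n t_n$ pairwise distinct and thereby eliminates the collision problem you face. I would recommend adopting that viewpoint: it converts your combinatorial endgame into a one-line determinant computation followed by dualization.
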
   

\begin{proof} 
Essentially the proof for  $\alpha = 1$ in \cite{Ha15}*{Proposition 5.3} generalizes to larger $\alpha$. 
For notational reasons we work with the additive group $\Z/p^{\alpha}$ instead of $G_{\alpha}$.

For $0 \leq \lambda_1, \ldots, \lambda_n \leq p-1$ we consider the group homomorphism 
\begin{eqnarray*} 
  \phi_{(\lambda_1, \ldots, \lambda_n)}   :  (\Z/p^\alpha)^n & \to & (\Z/p^\alpha)^{n+1} \\ 
      (x_1, \ldots, x_n) & \mapsto & ( x_1, \ldots, x_n, \lambda_1 x_1 + \cdots + \lambda_n x_n) \, . 
\end{eqnarray*} 
For all $\gamma \geq 1$ we have an $\F_p$-algebra isomorphism
\[
    \HH^*(B(\Z/p^\alpha)^\gamma ;\F_p) \cong  \F_p[t_1, \ldots, t_\gamma] \otimes \Lambda(s_1, \ldots, s_\gamma) \, , 
\]
where $t_1, \ldots, t_\gamma$ are indeterminates of degree $2$ and $s_1, \ldots, s_\gamma$ are indeterminates of degree $1$. 

The map induced in $\F_p$-cohomology  by $B \phi_{(\lambda_1, \ldots, \lambda_n)} : B (\Z/p^\alpha)^n \to B(\Z /p^{\alpha})^{n+1}$ satisfies
\begin{equation} \label{firsteq} 
    (  t_1^{m_1} s_1 \cdot  \ldots \cdot  t_n^{m_n} s_n ) \cdot t_{n+1}^{\nu} \mapsto (t_1^{m_1} s_1 \cdot \ldots \cdot t_n^{m_n} s_n) \cdot (\lambda_1 t_1 + \cdots + \lambda_n t_n)^{\nu} 
\end{equation} 
for $\nu \geq 0$. 
This computation uses the ring structures of $ \HH^*(B(\Z/p^\alpha)^\gamma ;\F_p)$ for $\gamma = n , n+1$. 

The $p^n \times p^n$ Vandermonde-matrix  
 \[ 
    V :=   \left( \begin{array}{cccc} 1 &  \lambda_1 t_1 + \cdots + \lambda_n t_n  & \cdots & (\lambda_1 t_1 + \cdots + \lambda_n t_n)^{p^n-1} \end{array} \right)_{ 0 \leq \lambda_1, \ldots , \lambda_n < p}
 \]
 (where the subscript parametrizes the rows) with entries in $\F_p[t_1, \ldots, t_n]$ has  determinant 
 \[ 
     \prod_{(\lambda_1, \ldots, \lambda_n) < (\mu_1, \ldots, \mu_n)} \big((\mu_1 - \lambda_1)t_1 + \cdots + ( \mu_n - \lambda_n)t_n\big) \neq 0,
 \]
applying  the lexicographic order to the index set. 
 Hence the column vectors of $V$ are linearly independent over $\F_p[t_1, \ldots, t_n]$. 
  
Setting $N^*  : = \HH^{\rm odd}(B\Z/p^{\alpha}; \F_p)$ this means, in view of \eqref{firsteq}, that the map  
\[
\bigoplus_{ 0 \leq \lambda_1, \ldots, \lambda_n < p}  \phi_{(\lambda_1, \ldots, \lambda_n)}^* :  (N^*)^n   \otimes \HH^{0 \leq 2m < 2p^n} (B\Z/p^{\alpha};\F_p) \longrightarrow 
\bigoplus_{0 \leq \lambda_1, \ldots, \lambda_n < p} (N^*)^n  
\]
is injective.  
Dualizing this statement over $\F_p$ we conclude that the map 
\[
    \sum_{ 0 \leq \lambda_1, \ldots, \lambda_n < p } (  \phi_{\lambda_1, \ldots, \lambda_n})_* :    \bigoplus_{0 \leq \lambda_1, \ldots, \lambda_n < p} (N_*)^n  \to   (N_*)^n \otimes {\rm span}_{\F_p} \{  c_0, \ldots, c_{2(p^n-1)} \} 
\]
is  surjective.
\end{proof} 

The modules $ (N_*)^n \otimes L_{< p^n}$ play an important role in the determination of $\mathscr{C}^{n, \infty}$, which we will carry out in two steps. 
First note that we have a canonical direct sum decomposition
\[
      (C_*)^n = \bigoplus_{\gamma = 0}^{n} ( C_*)^{n}_{(\gamma)} \, , 
\]
where $(C_*)^n_{(\gamma)} \subset  (C_*)^n$ is generated by those elementary tensors $c_{d_1} \otimes \cdots \otimes c_{d_n}$ involving $\gamma$ components of even degree.
For example $(C_*)^n_{(0)} = (N_*)^n$. 
Since the differentials $\partial^{(\kappa)}$ map $(C_*)^n_{(\gamma)}$ to $(C_*)^n_{(\gamma-1)}$ we get induced direct sum decompositions of $\mathscr{C}_*^{n, r}$ for $r \geq 0$. 

The next, somewhat involved, proposition takes care of the particular component 
\[
    \mathscr{D}_*^{n,r} := \mathscr{C}^{n,r}_* \cap (C_*)^n_{(1)} \subset  \mathscr{C}_*^{n,r}
\]
for certain $r$. 
The full structure of $ \mathscr{C}^{n,\infty}_*$ will afterwards be determined in  Proposition  \ref{description}. 
Note that for $r \geq 0$ the differential $\partial^{(r)} : (C_*)^n \to (C_*)^n$ induces a map $\partial^{(r)} :  \mathscr{D}_*^{n,r-1} \to (N_*)^n$ with kernel $\mathscr{D}^{n,r}_*$. 
Here and later we set $ \mathscr{D}_*^{n,-1} := (C_*)^n_{(1)}$.

\begin{prop}  \label{difficult} For $n \geq 0$ the following holds. 
\begin{enumerate}[label={(\roman*)}]  
\item \label{erst} The canonical projection $\pi : \mathscr{D}^{n+1,n-1}_*  \to (N_*)^n \otimes L_*$ is surjective and there exists a surjective map $\overline{\partial^{(n)}} : (N_*)^n \otimes L_* \to (N_*)^{n+1}$ such that the following diagram commutes: 
\[
   \xymatrix{ \mathscr{D}^{n+1,n-1}_*    \ar@{>}[rr]^{\pi} \ar@{>}[dr]^{\partial^{(n)}} &   & (N_*)^n \otimes L_* \ar@{>}[ld]_{\overline{\partial^{(n)}}}   \\ 
  &      ( N_*)^{n+1}    &    } 
\]
\item \label{zweit} The projection $\ker(  \overline{\partial^{(n)}}) \to (N_*)^n \otimes L_{< p^n}$ is an isomorphism. 
\item \label{dritt} $\mathscr{D}^{n+1,n}_* ( = \ker (\partial^{(n)}) )= \mathscr{D}^{n+1, \infty}_* $. 
\end{enumerate} 
\end{prop}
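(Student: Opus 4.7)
The argument is a simultaneous induction on $n$, based on the direct sum decomposition
\[
(C_*)^{n+1}_{(1)} \;=\; \bigl((C_*)^n_{(1)} \otimes N_*\bigr) \,\oplus\, \bigl((N_*)^n \otimes L_*\bigr)
\]
according to whether the unique even-degree factor sits in positions $1,\ldots,n$ or in position $n+1$. The second summand is precisely $E_{n+1} = (N_*)^n \otimes L_*$, onto which $\pi$ projects, and since each derivation $\partial^{(\kappa)}$ kills $N_*$, on the first summand $\partial^{(\kappa)}$ restricts to $\partial^{(\kappa)} \otimes \mathrm{id}_{N_*}$. This identifies $\ker\pi \cap \mathscr{D}^{n+1, n-1}_*$ with $\mathscr{D}^{n, n-1}_* \otimes N_*$.

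For the surjectivity of $\pi$ in part (i), given a basis element $\omega \otimes c_{2m} \in E_{n+1}$ with $\omega \in (N_*)^n$, I would construct a correcting term $x_{\le n} \in (C_*)^n_{(1)} \otimes N_*$ so that $\tilde{x} := x_{\le n} + \omega \otimes c_{2m}$ lies in $\mathscr{D}^{n+1, n-1}_*$. Since the last tensor factor of $x_{\le n}$ must lie in $N_*$, I would seek $x_{\le n}$ of the form $\sum_{\kappa=0}^{n-1} w_\kappa \otimes c_{2m - 2p^\kappa + 1}$, where each $w_\kappa \in (C_*)^n_{(1)}$ satisfies $\partial^{(\kappa)}(w_\kappa) = (-1)^{n+1}\omega$ and $\partial^{(\kappa')}(w_\kappa) = 0$ for all $\kappa' \in \{0,\ldots,n-1\}\setminus\{\kappa\}$. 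The existence of such constrained lifts $w_\kappa$ is the heart of the matter; it would follow from the inductive hypothesis by showing that the restriction of $\partial^{(\kappa)}$ to $(C_*)^n_{(1)} \cap \bigcap_{\kappa' \ne \kappa,\, \kappa' \le n-1}\ker \partial^{(\kappa')}$ still surjects onto $(N_*)^n$. With the lifts in hand, the map $\overline{\partial^{(n)}}(\omega \otimes c_{2m}) := \partial^{(n)}(\tilde{x})$ is well-defined because any two lifts differ by an element of $\mathscr{D}^{n, n-1}_* \otimes N_*$, and part (iii) at level $n-1$ gives $\mathscr{D}^{n, n-1}_* = \mathscr{D}^{n, \infty}_*$, so that $\partial^{(n)}$ already vanishes there. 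Surjectivity of $\overline{\partial^{(n)}}$ is then read off from the explicit form of the lifts.

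For part (ii), I would extract from the lifting recipe the linear system that the condition $\overline{\partial^{(n)}}(y)=0$ imposes on the coefficients of $y = \sum \lambda_{\omega, m}\, \omega \otimes y_{2m}$. Modeled on the $n=1$ case, where the constraint takes the shape $a_{s+p-1, k} = a_{s, k+p-1}$ and is uniquely solved by a recursion expressing each coefficient with $m \ge p$ in terms of one with $m < p$, the general system likewise determines all coefficients with $m \ge p^n$ from those with $m < p^n$; this yields the bijectivity of the projection $\ker \overline{\partial^{(n)}} \to (N_*)^n \otimes L_{<p^n}$.

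For part (iii), the relations imposed by $\partial^{(0)}(x) = \cdots = \partial^{(n)}(x) = 0$ are invariance conditions under translations in the multi-index set by shifts of size $p^\kappa - 1$. Since $p - 1$ divides every $p^\kappa - 1$, iterating the translation supplied by $\partial^{(1)}(x) = 0$ a total of $\tfrac{p^\kappa - 1}{p - 1}$ times reproduces the translation demanded by $\partial^{(\kappa)}(x) = 0$ for every $\kappa \ge n+1$, making those additional vanishing conditions automatic. The principal obstacle throughout is the inductive construction in part (i) of the constrained lifts $w_\kappa$: verifying the required surjectivity of $\partial^{(\kappa)}$ on the intersection of kernels demands a delicate simultaneous use of (i), (ii) and (iii) at the previous level, which is what makes the induction nontrivial while still tractable.
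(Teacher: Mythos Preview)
Your outline for part (i) matches the paper's approach closely: the same decomposition of $(C_*)^{n+1}_{(1)}$, the same construction of constrained lifts $w_\kappa$, and the same use of the inductive (iii) at level $n-1$ to make $\overline{\partial^{(n)}}$ well-defined. One omission: surjectivity of $\overline{\partial^{(n)}}$ is not simply ``read off'' from the lift formula. Applying $\partial^{(n)}$ to the lift of $\omega\otimes c_{2(m+p^n-1)}$ yields $(-1)^n\omega\otimes c_{2m-1}$ \emph{plus} correction terms $\partial^{(n)}(w_\kappa)\otimes c_{2m+2p^n-2p^\kappa-1}$, and the paper disposes of these by a secondary induction on $\deg(\omega)$, since $\deg\partial^{(n)}(w_\kappa)<\deg(\omega)$.

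The real gap is your argument for (iii). You describe the conditions $\partial^{(\kappa)}(x)=0$ as translation invariances and propose to obtain the $\kappa\ge n+1$ relations by iterating the $\kappa=1$ shift $\tfrac{p^\kappa-1}{p-1}$ times. This works only for $n+1\le 2$: there, after imposing $\partial^{(0)}=0$, the $\kappa=1$ relation on $(C_*)^2_{(1)}$ does collapse to a two-term recursion $\alpha_{s+p-1,t}=\alpha_{s,t+p-1}$ which can be iterated. For $n+1\ge 3$ the condition $\partial^{(\kappa)}(x)=0$ is an alternating $(n{+}1)$-term relation among coefficients sitting in \emph{different} position slots, not a translation, and your iteration breaks down. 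In fact your argument, taken at face value, would give $\mathscr{D}^{n+1,1}_*=\mathscr{D}^{n+1,\infty}_*$ for all $n$; but this contradicts part (i) at level $n\ge 2$, which says $\partial^{(n)}\colon\mathscr{D}^{n+1,n-1}_*\to (N_*)^{n+1}$ is \emph{surjective}, hence $\mathscr{D}^{n+1,n-1}_*\supsetneq\mathscr{D}^{n+1,n}_*$, and in particular $\mathscr{D}^{3,1}_*\supsetneq\mathscr{D}^{3,\infty}_*$. The paper proves (iii) by a completely different mechanism: it first establishes (ii) at the \emph{same} level $n$, and (ii) in turn rests on an external geometric input, Proposition~\ref{Vandermonde}, which uses a Vandermonde determinant in $\HH^*(B(\Z/p^\alpha)^\gamma;\F_p)$ to show that lens-space classes in $\mathscr{L}^{n+1}_*\subset\mathscr{C}^{n+1,\infty}_*$ already surject onto $(N_*)^n\otimes L_{<p^n}$. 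A dimension count then gives the bijectivity in (ii), and (iii) follows by correcting any $c\in\mathscr{D}^{n+1,n}_*$ by such a lens-space class to kill its projection to $(N_*)^n\otimes L_{<p^n}$, hence (by (ii)) to all of $(N_*)^n\otimes L_*$, and then invoking the inductive (iii). Your direct recursion for (ii) is a genuinely different idea and may be salvageable, but it does not supply the missing source of elements in $\mathscr{D}^{n+1,\infty}_*$ needed for (iii).
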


\begin{proof} We apply induction on $n$. 
For  $n = 0$ the proposition holds as
\begin{itemize} 
   \item $\mathscr{D}^{1,-1}_* = L_* = (N_*)^0 \otimes L_*$ and $\pi$ is an isomorphism,
   \item $\partial^{(0)} :  \mathscr{D}^{1,-1}_* \to N_{*-1}$ is an isomorphism and hence $\mathscr{D}^{1,0}_* = 0 = (N_*)^0 \otimes L_{< p^0}$. 
 \end{itemize} 
Now assume  that $n \geq 1$ and Proposition \ref{difficult} has been shown up to $n -1$.
Let $ c  = c_{2d_1 -1} \otimes \cdots \otimes c_{2d_n-1} \in  (N_*)^{n}$ and let $m > 0$. 
We will show $c \otimes y_{2m} \in \im ( \pi) $. 
Let $0 \leq \kappa \leq n-1$. 
Using the inductive assumption \eqref{erst}  we find  $\overline{c(\kappa)} \in \mathscr{D}^{\kappa+1, \kappa-1}_{ *}$ with $\partial^{(\kappa)}(\overline{c({\kappa})}) = c_{2d_1-1} \otimes \cdots \otimes c_{2d_{\kappa+1} - 1}$.
Setting $c(\kappa) := \overline{c(\kappa)} \otimes c_{2d_{\kappa+2} -1} \otimes \cdots \otimes c_{2d_n - 1}$ we then have $c(\kappa) \in \mathscr{D}^{n, \kappa-1}_{ \deg(c) + 2p^{\kappa}-1}$ and $\partial^{(\kappa)} (c(\kappa)) = c$. 
Using the induction assumption again several times  in order to balance $\partial^{(j)} (c(\kappa))$ for $j = \kappa+1, \ldots, n-1$ we can arrange furthermore  that   $\partial^{(j)} (c(\kappa)) = 0$ for $\kappa < j \leq n-1$. 
Summarizing we have $\partial^{(j)}(c(\kappa)) = 0$ for  $0 \leq j \leq n-1$ with $j \neq \kappa$, while $\partial^{(\kappa)}(c(\kappa)) = c$. 

With these choices we get 
\begin{equation} \label{preimage} 
    c \otimes c_{2m}  + (-1)^{n+1} \cdot \sum_{\kappa=0}^{n-1} c(\kappa) \otimes c_{2m - 2p^{\kappa}+1} \in \mathscr{D}^{n+1,n-1}_{*} 
\end{equation} 
and $\pi$ indeed  sends this element  to $c \otimes y_{2m} \in (N_*)^n \otimes L_*$.
This shows surjectivity of $\pi$.

If $c \in \mathscr{D}^{n+1, n -1}_{*} \cap \ker ( \pi)$, then $c \in  \mathscr{D}^{n, n -1}_{*} \otimes N_*$ by the definition of  $ \mathscr{D}^{n, n -1}_{*}$ and hence  $c \in \mathscr{D}^{n, \infty}_{*} \otimes N_*$, using the inductive assumption \eqref{dritt}. 
We conclude $\partial^{(n)}(c) = 0$, and hence $\overline{\partial^{(n)}}$ is well defined.

Next let  $c\in (N_*)^n$ and let  $m > 0$. 
We claim $c \otimes c_{2m-1} \in \im ( \partial^{(n)})$, showing that $\partial^{(n)}$, and hence $\overline{\partial^{(n)}}$, is surjective. 
The proof is by induction on $\deg(c)$.
As in  \eqref{preimage} we find  $c(\kappa) \in \mathscr{D}^{n, \kappa-1}_{ \deg(c) + 2p^{\kappa}-1}$ for $0 \leq \kappa \leq n-1$  with 
\[
    c \otimes c_{2m+2p^n-2}  + ( -1)^{n+1} \cdot \sum_{\kappa=0}^{n-1} c(\kappa) \otimes c_{(2m + 2p^n - 2) - 2p^{\kappa} +1} \in \mathscr{D}^{n+1,n-1}_{*}  \, . 
\]
We have  $\partial^{(n)} ( c \otimes c_{2m+2p^n-2} ) = (-1)^n c \otimes c_{2m-1}$ and 
 \[ 
   \partial^{(n)} \big( \sum_{\kappa=0}^{n-1} c(\kappa)  \otimes c_{2m + 2p^n-2p^{\kappa} -1} \big)  =  \sum_{\kappa=0}^{n-1} \partial^{(n)}_*  ( c(\kappa) ) \otimes c_{2m + 2p^n-2p^{\kappa} - 1}  \, .  
 \]
For $0 \leq \kappa\leq n-1$ we compute
\[
    \deg \big(  \partial^{(n)}  (c(\kappa))   \big) = \deg (c(\kappa) ) - ( 2p^n -1) = (  \deg( c) + 2p^{\kappa} - 1) - (2p^n  -1) < \deg(c) \, .
\] 
Hence $\partial^{(n)}_*  ( c(\kappa) ) \otimes c_{2m + 2p^n-2p^{\kappa} - 1} \in \im ( \partial^{(n)})$ by induction on $\deg(c)$. 
Altogether we see $c \otimes c_{2m-1} \in \im ( \partial^{(n)})$ as required. 
The proof  of \ref{erst}, the most difficult part of Proposition \ref{difficult},   is now complete. 

For \ref{zweit}  we first observe that  $\dim \ker ( \overline{\partial^{(n)}})_d = \dim \big( (N_*)^n \otimes L_{< p^n}\big)_d$ for $d \geq 0$, since, by an elementary  dimension count, 
\[
  \dim \big( (N_*)^n \otimes L_* \big)_d =   \dim\big(  (N_*)^n \otimes L_{< p^n} \big)_d  +  \dim \big( (N_*)^{n+1}\big)_{d - 2p^n +1}  
\]
and $\overline{\partial^{(n)}}$ is surjective by \ref{erst}. 
Furthermore 
\[
   \mathscr{L}_*^{n+1} \subset  \mathscr{C}^{n+1, n}_* \stackrel{{\rm proj.}}{\lra} \mathscr{D}^{n+1, n}_*  \stackrel{\pi}{\longrightarrow}  (N_*)^n \otimes L_* \longrightarrow (N_*)^n \otimes L_{< p^n}
\]
 is surjective by Lemma \ref{Vandermonde}, so that also  the projection $\ker ( \overline{\partial^{(n)}}) \to (N_*)^n \otimes L_{< p^n}$ is surjective. 
Since domain and target of this map have the same dimension in each degree  this implies assertion \ref{zweit}. 

For assertion \ref{dritt}  let $c \in  \mathscr{D}^{n+1, n}_{*}$. 
Since $\mathscr{L}_*^{n+1} \subset  \mathscr{C}^{n+1,\infty}_*$ Proposition \ref{Vandermonde} implies that there exists $x \in \mathscr{D}^{n+1, \infty}_{*}$  such that the projection of $c + x$ to $(N_*)^n \otimes L_{ < p^n}$ vanishes, 
Since $\partial^{(n)}_*  ( c+x) = 0$ and $\ker ( \overline{\partial^{(n)}})$ maps isomorphically to $(N_*)^n \otimes L_{< p^n}$ we obtain $\pi(c+x) =0 \in (N_*)^n \otimes L_*$. 
We conclude $c + x  \in \mathscr{D}^{n, n}_{*}  \otimes N_* = \mathscr{D}^{n, \infty}_{*}  \otimes N_*$ by the induction assumption \ref{dritt}. 
Since  $\mathscr{D}^{n, \infty}_{*}  \otimes N_* \subset  \mathscr{D}^{n+1, \infty}_{*}$ and $x \in \mathscr{D}^{n+1, \infty}_{*}$ assertion \ref{dritt}  follows. 
 \end{proof}

\begin{cor} \label{clear}  $\ker  \big(  \mathscr{C}^{n+1,\infty}_*  \to (N_*)^n \otimes L_{< p^n} \big) \subset  \ker \big(   \mathscr{C}^{n+1, \infty}_*  \to (N_*)^n \otimes L_{*} \big) $. 
\end{cor}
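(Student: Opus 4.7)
The plan is to deduce the inclusion by combining the three parts of Proposition \ref{difficult} and exploiting the decomposition
\[
  (C_*)^{n+1} = \bigoplus_{\gamma=0}^{n+1} (C_*)^{n+1}_{(\gamma)}
\]
by the number of even-degree tensor factors. First I would observe that both projections appearing in the statement, from $\mathscr{C}^{n+1,\infty}_*$ to $(N_*)^n \otimes L_*$ respectively to $(N_*)^n \otimes L_{<p^n}$, factor through extracting the $(1)$-component $c_{(1)} \in (C_*)^{n+1}_{(1)}$ of a class $c$ and applying the map $\pi$ from Proposition \ref{difficult}\ref{erst} (followed, in the second case, by the further projection $L_* \to L_{<p^n}$). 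So it is enough to prove that if the image of $\pi(c_{(1)})$ in $(N_*)^n \otimes L_{<p^n}$ vanishes, then $\pi(c_{(1)})$ itself vanishes in $(N_*)^n \otimes L_*$.

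Next, since each differential $\partial^{(\kappa)}$ lowers $\gamma$ by one and respects the above summand decomposition, the hypothesis $c \in \bigcap_{\kappa} \ker \partial^{(\kappa)}$ passes to every homogeneous component. In particular $c_{(1)} \in \mathscr{D}^{n+1,\infty}_*$. By Proposition \ref{difficult}\ref{dritt} this coincides with $\mathscr{D}^{n+1,n}_* = \ker\bigl(\partial^{(n)}|_{\mathscr{D}^{n+1,n-1}_*}\bigr)$, and the commutative triangle of Proposition \ref{difficult}\ref{erst} then forces
\[
  \overline{\partial^{(n)}}\bigl(\pi(c_{(1)})\bigr) = \partial^{(n)}(c_{(1)}) = 0,
\]
so that $\pi(c_{(1)}) \in \ker(\overline{\partial^{(n)}})$.

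Finally, Proposition \ref{difficult}\ref{zweit} identifies $\ker(\overline{\partial^{(n)}})$ with $(N_*)^n \otimes L_{<p^n}$ via the canonical projection. Since by hypothesis the further projection of $\pi(c_{(1)})$ to $(N_*)^n \otimes L_{<p^n}$ vanishes, this isomorphism forces $\pi(c_{(1)}) = 0$ in $(N_*)^n \otimes L_*$, which is the desired conclusion.

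I do not expect a substantive obstacle here, since the difficult structural work is already packaged in Proposition \ref{difficult}. The only point requiring a moment of care is the preliminary verification that each $\partial^{(\kappa)}$ strictly decreases the number of even tensor factors by one, so that the entire argument can be localized to the $(1)$-component and the isomorphism in Proposition \ref{difficult}\ref{zweit} can be applied directly.
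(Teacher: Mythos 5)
Your argument is correct and is essentially the paper's own proof: the paper likewise splits $c$ into its component with exactly one even-degree factor (your $c_{(1)}$, lying in $\mathscr{D}^{n+1,\infty}_*$) plus a remainder that obviously dies under the projection to $(N_*)^n \otimes L_*$, and then concludes from Proposition \ref{difficult} \ref{zweit} that the projection $L_* \to L_{<p^n}$ is injective on $\pi(\mathscr{D}^{n+1,n}_*)$. The only cosmetic difference is your appeal to Proposition \ref{difficult} \ref{dritt}, which is not actually needed, since $\mathscr{D}^{n+1,\infty}_* \subset \mathscr{D}^{n+1,n}_* = \ker \partial^{(n)}$ holds by definition of the intersection over all $\kappa$.
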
 

\begin{proof}  Let $c \in \ker  ( \mathscr{C}^{n+1,\infty}_*  \to (N_*)^n \otimes L_{< p^n} ) $. 
We decompose $c = c' + c''$ where $c' \in \mathscr{D}^{n+1, \infty}_*$ and $c''$ is a linear combination of elementary tensors $c_{d_1} \otimes \cdots \otimes c_{d_{n+1}}$ with $0$ or at least $2$ even degree components. 

Obviously $c'' \in \ker ( \mathscr{C}^{n+1, \infty}  \to (N_*)^n \otimes L_* \big)$.
This fact and the assumption on $c$ imply $c' \in \ker ( \mathscr{D}^{n+1, \infty}_* \to (N_*)^n \otimes L_{< p^n}) $. 
But  by Proposition \ref{difficult} \ref{zweit} the projection $(N_*)^n \otimes L_* \to (N_*)^{n} \otimes L_{< p^n}$ induces an isomorphism $\pi (\mathscr{D}^{n+1, n}_* ) \cong (N_*)^{n} \otimes L_{< p^n}$. 
Hence we must also have $c' \in \ker ( \mathscr{D}^{n+1,\infty}_* \to  (N_*)^{n} \otimes L_{*})$. 
 \end{proof} 
 
 We finally obtain a precise description of $ \mathscr{C}^{n,\infty}_*$ and of $ \mathscr{L}^{n}_* \subset \mathscr{C}^{n,\infty}_*$ (recall Definition \ref{curll}), showing in particular that the last inclusion is an equality. 

\begin{prop} \label{description} Let $\mathscr{J}_n$ denote the set  of families $ J = (J_1, \ldots, J_n)$, where $J_i = N_*$ or $J_i = L_{< p ^k}$ and $k$ is the number of $J_j$ for  $j < i$  with $J_j = N_*$.  
Then the canonical map (induced by projections $C_* \to N_*$ and $C_* \to L_*$) 
\[
      \Psi^n :   \mathscr{C}^{n,\infty}_*  \rightarrow \bigoplus_{\mathscr{J}_n}  J_1 \otimes \cdots \otimes J_n
\]
is an isomorphism.
The restriction of $\Psi^n$ to $\mathscr{L}^n_* \subset   \mathscr{C}^{n,\infty}_*$ is still surjective, and hence also an isomorphism. 
In particular $\mathscr{L}^n_* = \mathscr{C}^{n, \infty}_*$.
\end{prop}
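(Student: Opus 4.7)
The plan is induction on $n$. The base case $n=1$ is immediate: $\partial^{(0)}$ sends $c_{2m}$ to $c_{2m-1}\neq 0$, so $\mathscr{C}^{1,\infty}_*=N_*$; the family $\mathscr{J}_1$ contributes only the summand $(N_*)$, since $L_{<p^0}=0$; and $\mathscr{L}^1_*=N_*$ via the identity endomorphism of $G_\alpha$.

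For the inductive step I would establish separately the surjectivity of $\Psi^n|_{\mathscr{L}^n_*}$ onto each target summand and the injectivity of $\Psi^n$ on $\mathscr{C}^{n,\infty}_*$. Since the inclusion $\mathscr{L}^n_*\subseteq\mathscr{C}^{n,\infty}_*$ is already noted in the paragraph after Definition \ref{curll}, these two facts together yield $\mathscr{L}^n_*=\mathscr{C}^{n,\infty}_*$ and the claimed isomorphism.

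For surjectivity, fix $(J_1,\ldots,J_n)\in\mathscr{J}_n$ with $L$-type positions $S=\{i_1<\cdots<i_r\}$ and $N_*$-type positions $T$. The key observation is that the bound $k_j=i_j-j$ in $L_{<p^{k_j}}$ equals the number of $T$-coordinates strictly preceding $i_j$. Extending the argument of Proposition \ref{Vandermonde}, I introduce independent parameters $\Lambda=(\lambda^{(j)}_\ell)$ with $1\leq j\leq r$ and $1\leq\ell\leq k_j$, and consider the group homomorphism $\phi_\Lambda\colon (G_\alpha)^{n-r}\to(G_\alpha)^n$ acting as the identity on $T$-coordinates and sending each $S$-coordinate $i_j$ of the target to the linear combination $\sum_{\ell=1}^{k_j}\lambda^{(j)}_\ell\cdot(\ell\text{-th }T\text{-coordinate preceding }i_j)$. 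The cohomology computation carried out in the proof of Proposition \ref{Vandermonde}, applied simultaneously at each $S$-position on the disjoint parameter sets, becomes a tensor product of Vandermonde systems of sizes $p^{k_1},\ldots,p^{k_r}$; varying $\Lambda$ through $\F_p^{k_1+\cdots+k_r}$ and dualizing yields the desired surjection of $\mathscr{L}^n_*$ onto $J_1\otimes\cdots\otimes J_n$.

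For injectivity, write $c\in\ker(\Psi^n)$ as $c=\sum_S c_S$ along the decomposition $(C_*)^n=\bigoplus_{S\subseteq\{1,\ldots,n\}}(C_*)^n_S$, where each $(C_*)^n_S$ is identified with a tensor product of $N_*$'s and $E_*$'s via $c_{2m}\leftrightarrow y_{2m}$. The hypothesis implies that, for every $S$ with nontrivial $\mathscr{J}_n$-summand, each $L_{<p^{k_i}}$-component of $c_S$ at position $i\in S$ vanishes. I would then induct on the maximum cardinality of $S$ appearing in the support of $c$: Corollary \ref{clear}, together with its position-permuted analogues obtained by re-running its proof after swapping tensor factors, upgrades the restricted $L_{<p^{k_i}}$-vanishings to full $L_*$-vanishings at the top stratum, and the $\partial^{(\kappa)}$-compatibility relations then propagate the vanishing to the lower strata, forcing $c=0$.

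The principal obstacle is synchronizing the multi-Vandermonde construction in the surjectivity step with the sharp upper bound from Corollary \ref{clear} in the injectivity step. The identity $k_j=i_j-j$ is the crucial combinatorial input: it ensures that the $j$-th Vandermonde block uses exactly as many parameters as there are $T$-coordinates preceding $i_j$, so the lower and upper bounds agree precisely at $L_{<p^{k_j}}$. Any deviation would either overshoot the allowed image (breaking injectivity in the target) or undershoot it (breaking surjectivity of $\Psi^n|_{\mathscr{L}^n_*}$), and the precise bookkeeping that ensures this alignment across all $S$ simultaneously is the delicate part of the argument.
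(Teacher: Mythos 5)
Your base case and the multi-Vandermonde idea for the surjectivity step are reasonable (the Kronecker product of Vandermonde blocks of sizes $p^{k_1},\ldots,p^{k_r}$ over the domain $\F_p[t_i]$ does generalize the computation in Proposition~\ref{Vandermonde}), but your overall logic has a gap right at the point where you combine the two halves. Showing that $\Psi^n|_{\mathscr{L}^n_*}$ composed with the projection onto \emph{each} summand $J_1\otimes\cdots\otimes J_n$ is surjective, together with injectivity of $\Psi^n$ on $\mathscr{C}^{n,\infty}_*$, does \emph{not} imply surjectivity onto the direct sum (a subspace can project onto every summand without being everything, as the diagonal in $\F_p\oplus\F_p$ shows, and injectivity only gives $\dim\mathscr{C}^{n,\infty}_*\leq\dim\bigoplus_{\mathscr{J}_n}J_1\otimes\cdots\otimes J_n$). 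The point is not cosmetic: the pushforwards $\phi_{\Lambda*}$ of products of lens-space classes have nonzero $\Psi^n$-components in several $\mathscr{J}_n$-summands simultaneously, so to get the containment $J_1\otimes\cdots\otimes J_n\subset\Psi^n(\mathscr{L}^n_*)$ one must correct the spurious components by classes already known to be in the image. The paper does exactly this by inducting on $n$: summands ending in $N_*$ are absorbed via $\mathscr{L}^n_*\otimes N_*\subset\mathscr{L}^{n+1}_*$ and the inductive hypothesis, and only then is the Vandermonde construction applied at the last position. Your proposal omits this correction/ordering step entirely.

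The injectivity sketch is the more serious problem. First, $\mathscr{C}^{n,\infty}_*$ decomposes only according to the number $\gamma$ of even tensor positions, not according to the subsets $S$ themselves: the relations $\partial^{(\kappa)}c=0$ couple the components $c_S$ for distinct $S$ of equal cardinality (for instance $c_1\otimes c_2+c_2\otimes c_1$, the reduction of $\mathscr{T}(c_1,c_1)$, lies in $\mathscr{C}^{2,\infty}_*$ although neither summand does), so an induction on ``the maximal $S$ in the support'' with stratum-wise upgrades is not well-founded as described. Second, the upgrade you invoke is not what Corollary~\ref{clear} provides: that corollary (via Proposition~\ref{difficult}) concerns a single distinguished $L$-position with \emph{all} remaining factors being full $C_*$'s, and its truncation bound $p^{n}$ is governed by the number of those remaining factors; a position-permuted version therefore needs vanishing up to a bound determined by all other factors, whereas $\ker\Psi^n$ only supplies vanishing up to $p^{k_i}$ with $k_i$ counting the $N_*$-positions preceding $i$, and only for the \emph{simultaneous} truncation over all $L$-positions of the stratum. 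In the paper this mismatch is resolved by first projecting the other $L$-positions to their truncated types $J'_i=L_{<p^k}$ (so they become inert and only $k(J)+1$ active $C_*$-factors remain), applying Corollary~\ref{clear} at the last position, and then undoing these truncations through the inductive injectivity of $\Psi^n$ via $(\Psi^n\otimes\mathrm{id}_{L_*})\circ\pi=\pi_2\circ\pi_1$; the treatment of several $L$-positions is thereby delegated to the induction on $n$. Your phrase ``propagate the vanishing to the lower strata'' does not supply a substitute mechanism, so the key statement $\ker\Psi^n\cap\mathscr{C}^{n,\infty}_*=0$ remains unproved in your outline.
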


\begin{proof}
Since source and target of $\Psi^1$ are equal to $\mathscr{L}^1_* = N_*$ the assertions are clear for $n=1$. 
  By induction we assume that they hold for some $n \geq 1$. 

For $J = (J_1, \ldots, J_n) \in \mathscr{J}_n$ let $k(J)$ denote the number of components $J_i = N_*$. 
Furthermore we set $J'_i = C_*$ for $J_i = N_*$ and $J'_i  = L_{< p^k}$ for $J_i = L_{< p^k}$. 
In  the induction step we first prove injectivity of $\Psi^{n+1}$. 

Let  $c \in  \ker \Psi^{n+1}$. 
We study the image of $c$ under  the composition of projections 
\[
    \mathscr{C}^{n+1, \infty}_*  \stackrel{\pi_1}{\longrightarrow}   \Big( \bigoplus_{\mathscr{J}_n} J'_1 \otimes \cdots \otimes J'_n \Big) \otimes C_* \stackrel{\pi_2}{\longrightarrow}   \Big(  \bigoplus_{\mathscr{J}_n}   J_1 \otimes \cdots \otimes J_{n} \Big) \otimes L_* \, . 
\]
Note that $\pi_1$ commutes with the differentials $\partial^{(\kappa)}$ for $\kappa \geq 0$ (with zero differentials on $L_{< p^k}$). 

Let  $c'\in J'_1 \otimes \cdots \otimes J'_n \otimes C_*$ be one component of $\pi_1(c) $, where $J \in \mathscr{J}_n$. 
 By assumption  the image of $c'$ under the map 
 \[
     J'_1  \otimes \cdots \otimes J'_n \otimes C_* \to J_1 \otimes \cdots \otimes J_n \otimes L_{< p^{k(J)}}
  \]
is zero. 
By Corollary \ref{clear} we have 
\[
     \ker  \big(  \mathscr{C}^{k(J) +1, \infty}_{*}  \to (N_*)^{k(J)}  \otimes L_{< p^{k(J)}} \big) \subset  \ker \big(   \mathscr{C}^{k(J) +1, \infty}_{*}   \to (N_*)^{k(J)}   \otimes L_{*} \big)
\]
and hence $\pi_2(c') = 0$ (recall that $k(J) +1$ is the number of factors $C_*$ in $J'_1  \otimes \cdots \otimes J'_n \otimes C_*$ and that all other factors are equal to some $L_{<p^k}$ with zero differential). 
Applying this argument to all components $c'$ of $\pi_1(c)$ we conclude $\pi_2( \pi_1(c)) = 0$. 

Let $\pi  : \mathscr{C}^{n+1, \infty}_* \to \mathscr{C}^{n, \infty}_* \otimes L_*$ be the projection (recall again that $C_* \to L_*$ commutes with all differentials $\partial^{(\kappa)}$ for $\kappa \geq 0$). 
Since $( \Psi^n \otimes \id) \circ \pi = \pi_2 \circ \pi_1$ our induction assumption (injectivity of $\Psi^n$)  implies $\pi(c) = 0 $. 
Hence $c \in \ker \Psi^n \otimes N_*$, which is equal to $0$, again by the induction assumption.
This shows that $\Psi^{n+1}$ is injective.  

We next show that $\Psi^{n+1}$ maps $\mathscr{L}^{n+1}_* \subset \mathscr{C}^{n+1, \infty}_*$ surjectively onto  $\bigoplus_{\mathscr{J}_{n+1}}  J_1 \otimes \cdots \otimes J_{n+1}$, completing the induction step. 

Let $(J_1, \ldots, J_{n+1}) \in \mathscr{J}_{n+1}$. 
We have to show $J_1 \otimes \cdots \otimes J_{n+1} \subset \Psi^{n+1} ( \mathscr{L}_*^{n+1})$. 
 By induction we have $ J_1 \otimes \cdots \otimes J_n \subset \Psi^n ( \mathscr{L}_*^{n})$. 
 In particular $J_1 \otimes \cdots \otimes J_n \otimes N_* \subset \Psi^{n+1} ( \mathscr{L}_*^n \otimes N_*)$. 
Since $\mathscr{L}_*^n \otimes N_* \subset \mathscr{L}_*^{n+1}$ we can hence restrict to the case  $J_{n+1} = L_{< p^{k(J)}}$, where $J:= (J_1, \ldots, J_n)$.
For each group homomorphism $\phi : (G_\alpha)^{k(J)} \to (G_\alpha)^{k(J)+1}$ we obtain an induced map $(N_*)^{k(J)} \to (N_*)^{k(J)} \otimes L_{< p^{k(J)}}$ (compare \eqref{cool}) and hence an induced map $J_1 \otimes \cdots \otimes J_n \to J_1 \otimes \cdots \otimes J_n \otimes L_{< p^{k(J)}}$ equal to the identity on factors $J_i = L_{< p^k}$ for some $k$ and  $i = 1, \ldots, n$. 
Using the proof of Proposition \ref{Vandermonde}  the images of these maps for different $\phi$ span $J_1 \otimes \cdots \otimes J_n \otimes L_{< p^{k(J)}}$. 
Since $J_1 \otimes \cdots \otimes J_n \subset \Psi^n (\mathscr{L}_*^{n})$ by induction we conclude $J_1 \otimes \cdots \otimes J_n \otimes L_{< p^{k(J)}} \subset  \Psi^{n+1} (\mathscr{L}_*^{n+1})$ by the functoriality of group homology.
\end{proof} 

\begin{rem} The formulation of Proposition \ref{description} is inspired by \cite{JW}*{Theorem 5.1}, also see \cite{Ha15}*{Theorem 1.2}.  
In contrast to these sources the  algebraic argument above does not rely on the solution of a Conner-Floyd conjecture for $\Omega^{\SO}_*(B\Gamma)$, which seems to be inaccessible at present for $\alpha > 1$. 
Indeed we believe that our approach may be a first step towards an algebraic proof of the Conner-Floyd conjecture (for $\alpha=1$), which was resolved in \cites{Mitchell, RW} by topological methods. 
\end{rem}

\begin{proof}[Proof of Proposition \ref{productlens}] 
The decomposition of $(C(\alpha)_*)^n$ from  \eqref{dirsum} (after tensoring with $\Z/p^\alpha$, respectively $\F_p$) is compatible with the operations $\beta^{(\alpha)}$ and $\partial^{(\kappa, \alpha)}$. 
By induction on $n$ it is hence sufficient to show that the image of the composition 
\[
   \psi :   \RH_*(B\Gamma ; \Z/p^{\alpha} ) \subset (C(\alpha)_* \otimes \Z/p^\alpha)^n  \to (\tilde C(\alpha)_* \otimes \F_p)^n = (C_*)^n 
\]
is generated by generalized products of $\Z/p^{\alpha}$-lens spaces $L^{2m_1-1} \times \cdots \times L^{2m_k-1} \to B\Gamma$ for $1 \leq k \leq n$. 
Propositions \ref{impliso}  and  \ref{description} imply that the image of $\psi$ is equal to $ \mathscr{C}^{n, \infty}_* = \mathscr{L}^{n}_*$, from which this claim follows. 
\end{proof}

%%%%
\section{Proof of Theorem \ref{representable} } \label{CF} 
%%%%
Let $1 \leq \alpha_1 \leq \cdots \leq \alpha_n$, let $\Gamma = G_{\alpha_1} \times \cdots \times G_{\alpha_n}$ and let $h \in \HH_*( B \Gamma ; \Z)$ be contained in the image of  $ \Omega^{\SO}_*(B \Gamma) \to \HH_*(B\Gamma; \Z)$. 
Using the decomposition from  \eqref{dirsum} we represent $h$ by a cycle in 
\[
    C(\alpha_1)_* \otimes \cdots \otimes C(\alpha_n)_* =  \bigoplus     \tilde C(\alpha_{i_1})_*  \otimes \cdots \otimes \tilde C(\alpha_{i_k})_* \, .  
\]

\begin{prop} \label{cl} Let $1 \leq k \leq n$ and $1 \leq i_1 <  \ldots <  i_k \leq n$.
Then the  $(i_1, \ldots, i_k)$-component of this cycle in  $\tilde C(\alpha_{i_1})_*  \otimes \cdots \otimes \tilde C(\alpha_{i_k})_* \subset C(\alpha_{i_1})_* \otimes \cdots \otimes C(\alpha_{i_k})_*$ is positive.
\end{prop}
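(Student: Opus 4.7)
The plan is to reduce, via functoriality of positive homology and a downward induction on $k$, to proving positivity in $\HH_*(B\Gamma')$ of a representable class $\tilde h$ sitting entirely in the top smash-product summand $\tilde\HH_*(\widehat{B\Gamma'}) \subset \HH_*(B\Gamma')$, where $\Gamma' := G_{\alpha_{i_1}} \times \cdots \times G_{\alpha_{i_k}}$. The pushforward $\pi_*(h) \in \HH_*(B\Gamma')$ along the projection $\pi : \Gamma \to \Gamma'$ is again representable, and its decomposition along \eqref{dirsum} picks out $\tilde h$ together with the summands corresponding to proper subsets of $(i_1, \ldots, i_k)$. By downward induction on $|(i_1, \ldots, i_k)|$ these lower-order summands are already known to be positive, so subtracting them off and using that $\HH^{\mathscr{Q},+}_*$ is closed under sums and under pushforward along the subgroup inclusion $\Gamma' \hookrightarrow \Gamma$ reduces the claim to showing that $\tilde h$ is positive in $\HH_*(B\Gamma')$.

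I then split into two cases. When the $\alpha_{i_j}$ are not all equal, Proposition~\ref{generated} writes $\tilde h$ as a $\Z$-linear combination of special cycles of the form $\mathscr{T}(c^{(i_1)}_{2m_1-1}, \ldots, c^{(i_l)}_{2m_l-1}) \otimes c^{(s_1)}_{2d_1-1} \otimes \cdots$, with the iterated Toda bracket rooted at the smallest-$\alpha$ factor. When the bracket already mixes two distinct values of $\alpha$, Proposition~\ref{bplreppos}(ii) yields positivity of the Toda cycle directly; otherwise the tail contains a factor whose $\alpha$ is strictly larger than $\alpha_{i_1}$, and one either rewrites the cycle using Proposition~\ref{calculate} and Lemma~\ref{drei} so that the Toda bracket itself absorbs the mixed-$\alpha$ feature, or applies Corollary~\ref{summary}(i) using a positive tail lens-space generator $c^{(s_r)}_{2d_r-1}$ with $d_r \geq 2$.

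The more delicate case is when all $\alpha_{i_j}$ share a common value $\alpha$, where representability of $\tilde h$ enters essentially. The mod-$p^\alpha$ reduction of $\tilde h$ lies in $\RH_*(B\Gamma' ; \Z/p^\alpha)$, since the Bockstein $\beta^{(\alpha)}$ vanishes on classes lifting to integral homology and the operations $\partial^{(\kappa, \alpha)}$ defined in \eqref{defpartial} vanish on classes coming from $\Omega^{\SO}_*$. Proposition~\ref{productlens} then represents the further reduction mod $p$ as a sum of generalized products of $\Z/p^\alpha$-lens spaces, each of which is positive by the shrinking-one-factor principle (Proposition~\ref{posproduct}) whenever at least one lens-space factor has dimension $\geq 3$; the remaining $p$-divisible discrepancy between $\tilde h$ and a mod-$p$ positive representative is positive by Proposition~\ref{bplreppos}(i), combined with Lemma~\ref{drei} to handle the degree-one contributions. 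The principal obstacle is precisely this integral lift in the all-equal case: the generation result of Proposition~\ref{productlens} is only stated modulo $p$, and carefully controlling the $p$-divisible tail together with the passage between $\Z/p^\alpha$- and $\F_p$-coefficients is where the bulk of the work lies and where the combined strength of the preceding computations is required.
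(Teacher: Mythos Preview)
Your outline has the right ingredients but the case split is where the argument breaks down. The dichotomy ``all $\alpha_{i_j}$ equal'' versus ``not all equal'' is not the correct decomposition, and the ``not all equal'' branch has a genuine gap.

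Concretely: suppose the $\alpha_{i_j}$ are not all equal, and let $n'$ be maximal with $\alpha_{i_{n'}} = \alpha_{i_1}$. Among the special cycles produced by Proposition~\ref{generated} there are terms whose Toda bracket lies entirely in the equal-$\alpha$ block $\{i_1,\ldots,i_{n'}\}$ \emph{and} whose tail factors at positions $i_{n'+1},\ldots,i_k$ are all equal to $c_1$. Such a term looks like
\[
   \big(\mathscr{T}(c^{(i_1)}_{2m_1-1},\ldots) \otimes \cdots\big) \otimes c^{(i_{n'+1})}_1 \otimes \cdots \otimes c^{(i_k)}_1 \, ,
\]
and its Toda factor may be precisely one of the problematic brackets of Question~\ref{openprob}, e.g.\ $\mathscr{T}(c_1,c_{2p^\kappa-1})$ with equal $\alpha$'s. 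Your proposed remedy---``rewrite the cycle using Proposition~\ref{calculate} and Lemma~\ref{drei} so that the Toda bracket absorbs the mixed-$\alpha$ feature''---does not work: there is no relation expressing $\mathscr{T}(\ldots)\otimes c^{(s)}_1$ in terms of a longer Toda bracket that includes the index $s$; these are simply different homology classes.

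The paper's approach is instead to collect all such terms into a single cycle $c' \in (\tilde C(\alpha_{i_1})_*)^{n'}$ (so that the residual $\mathscr{R}$ is exactly what your argument does handle) and then to prove the nontrivial Lemma~\ref{divide}: $[c'] \in \RH_*\big(B(G_{\alpha_{i_1}})^{n'};\Z/p^{\alpha_{i_1}}\big)$. This is not automatic, because $c'$ is \emph{not} the component of a representable class in the homology of $(G_{\alpha_{i_1}})^{n'}$; it is obtained by subtracting $\mathscr{R}$ from the top component of a representable class in $B\Gamma$. One must verify (using Proposition~\ref{compute_diff} and the derivation property) that $\partial^{(\kappa,\alpha_{i_1})}(\mathscr{R})$ lands in a subspace that does not interfere with the $(c'\otimes c_1\otimes\cdots)$-piece. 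Once $c'\in\RH_*$ is established, the argument proceeds exactly as in your ``all equal'' case. In other words, the $\RH_*$ argument is needed in general, not only when all $\alpha$'s coincide.

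A secondary omission: you never invoke the $p$-atorality hypothesis on $h$ from Theorem~\ref{representable}, but it is essential. It is what guarantees that the generalized lens-space products produced by Proposition~\ref{productlens} always have at least one factor of dimension $\geq 3$, and likewise what excludes the $p$-toral class $[c_1\otimes\cdots\otimes c_1]$ from appearing in the final $p$-divisible residue.
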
 

Mapping the resulting positive classes in $\HH_*(BG_{\alpha_{i_1}} \times \cdots \times BG_{\alpha_{i_k}} ; \Z)$ to $H_*(B\Gamma; \Z)$  by the canonical subgroup inclusions this implies that $h \in \HH_*(B \Gamma ; \Z)$ is positive, finishing the proof of Theorem \ref{representable}. 

\medskip 

\begin{proof}[Proof of Proposition \ref{cl}] It is enough to deal with the case $k = n$, the other components of $h$ are treated in an analogous fashion. 
For this aim let  $c \in \tilde C(\alpha_{1})_*  \otimes \cdots \otimes \tilde C(\alpha_{n})_*$ represent the corresponding component of $h$.

Let  $1 \leq n' \leq n$ be maximal with $\alpha_{n'} = \alpha_1$, that is $\alpha_1 = \cdots = \alpha_{n'} < \alpha_{n'+1}  \leq \cdots \leq \alpha_n$. 
We set $\Gamma' := (G_{\alpha_1})^{n'}$, regarded as a subgroup of $\Gamma$ in the obvious way. 
By Proposition \ref{generated} we may assume that  $c$  is a linear combination of special cycles 
\[
    \mathscr{T}  (c_{2m_{1}-1}^{(i_1 = 1)} ,  \ldots, c_{2m_{j}-1}^{(i_j)}) \otimes c_{2d_{1}-1}^{(s_1)} \otimes \cdots \otimes c_{2d_{n-j}-1}^{(s_{n-j})} \, . 
\]
We write 
\begin{equation} \label{decomposec} 
    c = c' \otimes c^{(n'+1)}_1 \otimes \cdots \otimes c^{(n)}_1 + \mathscr{R} 
\end{equation} 
where $c'$ is a linear combination of special cycles in $ (\tilde C(\alpha_1)_*)^{n'}$ and $\mathscr{R}$ is a linear combination of special cycles 
\[
  \mathscr{T}  (c_{2m_{1} - 1}^{(i_1 = 1)} ,  \ldots, c_{2m_{j}-1}^{(i_j)}) \otimes c_{2d_{1}-1}^{(s_1)} \otimes \cdots \otimes c_{2d_{n-j}-1}^{(s_{n-j})} \in \tilde C(\alpha_{1})_*  \otimes \cdots \otimes \tilde C(\alpha_{n})_*
\]
such that $i_j \geq  n'+1$ or there exists $1 \leq \mu \leq n-j$ with $s_{\mu} \geq n' +1$ and  $d_{\mu} \geq 2$.

By Proposition \ref{bplreppos}  \ref{um} and Corollary \ref{summary} \ref{oans} the cycle $\mathscr{R} \in \tilde C(\alpha_1)_* \otimes \cdots \otimes \tilde C(\alpha_n)_* \subset C(\alpha_1)_* \otimes \cdots \otimes C(\alpha_n)_*$ is positive. 
For completing the proof of the positivity of $c$ it hence remains to show that also the cycle $c' \in (\tilde C(\alpha_1)_*)^{n'} \subset (C(\alpha_1)_*)^{n'}$ is positive. 

We will argue that by the results of Section \ref{realis} the cycle $c'$ is positive modulo some $p$-divisible, $p$-atoral cycle, which can then be dealt with by Proposition \ref{bplreppos} \ref{dois}.  
The next lemma ensures the crucial property of $c'$ needed for this argument. 
 
 \begin{lem} \label{divide} We have $[c'] \in \RH_* (B \Gamma'; \Z/p^{\alpha_1})$.
\end{lem}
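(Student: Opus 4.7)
My strategy is to reduce everything mod $p^{\alpha_1}$ and exploit the fact that both operations defining $\RH_*$ vanish on the mod $p^{\alpha_1}$ homology of the factors $BG_{\alpha_i}$ with $i > n'$. By the Leibniz rule these operations will then see only the first $n'$ tensor factors of the Künneth decomposition, and the lemma will fall out by extracting the coefficient of the monomial $c_1^{(n'+1)} \otimes \cdots \otimes c_1^{(n)}$ in the last $n - n'$ factors.

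To begin, since $h$ lies in the image of $\Omega^{\SO}_*(B\Gamma) \to \HH_*(B\Gamma;\Z)$, its reduction $\bar h$ mod $p^{\alpha_1}$ lies in $\RH_*(B\Gamma;\Z/p^{\alpha_1})$: it is killed by $\beta^{(\alpha_1)}$ since it lifts to integral coefficients, and by every $\partial^{(\kappa,\alpha_1)}$ since the natural transformation $\Omega^{\SO}_* \to \HH_*(-;\Z/p^{\alpha_1})$ factors through $\BP^{(\kappa,\alpha_1)}_*$. By naturality of both operations under the projections $BG_{\alpha_i} \to *$, the decomposition \eqref{dirsum} (tensored with $\Z/p^{\alpha_1}$) consists of summands stable under these operations, so the component $\bar c$ of $\bar h$ in $\tilde C(\alpha_1)_* \otimes \cdots \otimes \tilde C(\alpha_n)_* \otimes \Z/p^{\alpha_1}$ also lies in $\RH_*$.

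Next I would show that these operations are effectively trivial on the last $n - n'$ factors. For every $i > n'$ the condition $\alpha_i > \alpha_1$ forces the cellular differential of $\tilde C(\alpha_i)_* \otimes \Z/p^{\alpha_1}$ to vanish and every generator $c_d^{(i)}$ to lift to $\Z/p^{\alpha_1 + 1}$-coefficients (since $\alpha_i \geq \alpha_1 + 1$), so $\beta^{(\alpha_1)} = 0$ on $\HH_*(BG_{\alpha_i};\Z/p^{\alpha_1})$; Proposition \ref{compute_diff} likewise gives $\partial^{(\kappa,\alpha_1)} = 0$ there, the formula in that proposition being nonzero only when $\ell = \alpha$. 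Since both $\beta^{(\alpha_1)}$ and $\partial^{(\kappa,\alpha_1)}$ are derivations with respect to the cross product and every factor $\HH_*(BG_{\alpha_i};\Z/p^{\alpha_1})$ is $\Z/p^{\alpha_1}$-free (the cellular differential is divisible by $p^{\alpha_i} \geq p^{\alpha_1}$), Künneth makes these operations act on $(\tilde C(\alpha_1)_*)^{\otimes n'} \otimes \tilde C(\alpha_{n'+1})_* \otimes \cdots \otimes \tilde C(\alpha_n)_* \otimes \Z/p^{\alpha_1}$ purely through the first $n'$ tensor factors.

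Finally I would extract $[c']$. The last $n - n'$ factors split as a direct sum of rank-one $\Z/p^{\alpha_1}$-submodules indexed by monomials in the generators $c_d^{(i)}$, and by the previous paragraph each such summand is preserved by $\beta^{(\alpha_1)}$ and $\partial^{(\kappa,\alpha_1)}$. Writing $\bar c = \bar c' \otimes c_1^{(n'+1)} \otimes \cdots \otimes c_1^{(n)} + \bar{\mathscr{R}}$ and projecting the identities $\beta^{(\alpha_1)}(\bar c) = 0$ and $\partial^{(\kappa,\alpha_1)}(\bar c) = 0$ onto the summand indexed by $c_1^{(n'+1)} \otimes \cdots \otimes c_1^{(n)}$ yields $\beta^{(\alpha_1)}([\bar c']) \otimes c_1^{(n'+1)} \otimes \cdots \otimes c_1^{(n)} = 0$ and similarly for $\partial^{(\kappa,\alpha_1)}$; since $c_1^{(n'+1)} \otimes \cdots \otimes c_1^{(n)}$ is a basis element of a free $\Z/p^{\alpha_1}$-module, we conclude $\beta^{(\alpha_1)}([\bar c']) = 0$ and $\partial^{(\kappa,\alpha_1)}([\bar c']) = 0$ for every $\kappa \geq 1$, which is precisely $[c'] \in \RH_*(B\Gamma';\Z/p^{\alpha_1})$. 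The one point that needs genuine justification rather than being bookkeeping is the Leibniz/Künneth compatibility of $\partial^{(\kappa,\alpha_1)}$ with the tensor decomposition; I would invoke this from its construction as the $d^{2p^\kappa-1}$-differential in the Atiyah--Hirzebruch spectral sequence of the multiplicative theory $\BP^{(\kappa,\alpha_1)}$, as noted directly after \eqref{defpartial}.
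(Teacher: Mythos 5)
Your overall strategy coincides with the paper's: use the bordism lift of $h$ to obtain vanishing of $\partial^{(\kappa,\alpha_1)}$ and $\beta^{(\alpha_1)}$ on the relevant smash component, exploit that these operations are derivations which act trivially on the factors $BG_{\alpha_i}$ with $\alpha_i>\alpha_1$, and then compare coefficients of monomials in the last $n-n'$ tensor factors. The gap is in the final extraction step: you identify the component of $\bar c$ along the summand indexed by $c_1^{(n'+1)}\otimes\cdots\otimes c_1^{(n)}$ with $\bar c'$, i.e.\ you tacitly assume that $\bar{\mathscr{R}}$ has no component in that summand. This is false in general. By Construction \ref{deftor}, a special cycle occurring in $\mathscr{R}$ with $i_j\geq n'+1$ and $m_j=1$ (and with all $s_\mu>n'$ carrying $d_\mu=1$, all other $i_\gamma\leq n'$) contains the term
\[
  p^{\alpha_{i_j}-\alpha_1}\cdot\big( c^{(i_1=1)}_{2m_1}\otimes\cdots\otimes c^{(i_{j-1})}_{2m_{j-1}}\otimes c^{(i_j)}_{1}\big)\otimes c^{(s_1)}_{2d_1-1}\otimes\cdots \, ,
\]
which lies precisely in the summand you project onto. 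Modulo $p^{\alpha_1}$ this survives whenever $\alpha_{i_j}-\alpha_1<\alpha_1$ (for instance $\alpha_1=2$, $\alpha_{i_j}=3$), so your projection only yields $\partial^{(\kappa,\alpha_1)}(\bar c'+\bar\rho)=0$ for a possibly nonzero correction term $\bar\rho$, not $\partial^{(\kappa,\alpha_1)}(\bar c')=0$. (For $\alpha_1=1$ the correction is killed mod $p$, which is probably why it is easy to overlook.)

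Closing this gap requires showing $\partial^{(\kappa,\alpha_1)}(\bar\rho)=0$ and $\beta^{(\alpha_1)}(\bar\rho)=0$, and this is exactly the content of the paper's case analysis on $\mathscr{R}$: the even-degree entries of such terms sit in $C(\alpha_1)$-factors but carry the coefficient $p^{\alpha_{i_\gamma}-\alpha_1}\geq p$, while by Proposition \ref{compute_diff} the operation $\partial^{(\kappa,\alpha_1)}$ there multiplies by $p^{\alpha_1-1}$, so the product is divisible by $p^{\alpha_1}$ and vanishes; equivalently, the paper applies $\partial^{(\kappa,\alpha_1)}$ to $\mathscr{R}$ \emph{first} and verifies that the result lies in the span $\mathscr{S}_*$ of tensors having a factor of degree $>1$ in some position $>n'$, and only then compares components. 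For the Bockstein the paper sidesteps the issue entirely by noting that $c'$ is itself an integral cycle, so $[c']\in\ker\beta^{(\alpha_1)}$ without any projection argument. With this additional computation your argument becomes the paper's proof; without it, the step ``projecting yields $\partial^{(\kappa,\alpha_1)}([\bar c'])=0$'' is not justified.
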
 

\begin{proof} Since $[c']$ lifts to an integral class it lies in the kernel of the Bockstein operation $\beta^{(\alpha_1)} : \HH_*(B \Gamma' ; \Z/p^{\alpha_1}) \to \HH_*(B \Gamma' ; \F_p)$.
In the remainder of this proof we will work with coefficients $\Z/p^{\alpha_1}$. 
It remains to show that $\partial^{(\kappa, \alpha_1)}(c') = 0 \in (C(\alpha_1)_*)^{n'}$ for all $\kappa \geq 1$. 

Since by Proposition \ref{compute_diff} the cycle $c_0$ is not hit by a differential $\partial^{(\kappa, \alpha_1)}$ it is sufficient to show this vanishing property after projection to $(\tilde C(\alpha_1)_*)^{n'}$. 
The class $[c] \in  \HH_* ( \tilde C(\alpha_{1})_*  \otimes \cdots \otimes \tilde C(\alpha_{n})_*) \cong \tilde \HH_*(\widehat{B\Gamma})$ is equal to the image of $h$ under  the projection $B\Gamma \to \widehat{B \Gamma}$. 
Since $h$ lifts to $\Omega_*^{\SO}(B \Gamma)$ we conclude that $\partial^{(\kappa, \alpha_1)}(c) = 0  \in \tilde C(\alpha_1)_* \otimes \cdots \otimes \tilde C(\alpha_n)_*$.
(Recall that $\alpha_{n'+1}, \ldots, \alpha_n > \alpha_1$ and we use coefficients $\Z/p^{\alpha_1}$.)
Since $\partial^{(\kappa, \alpha_1)}$ acts as a derivation and trivially on $c_1^{(s)}$ for $n'+1 \leq s \leq n$,   the claim $\partial^{(\kappa, \alpha_1)}(c') = 0 \in (\tilde C(\alpha_1)_*)^{n'}$ therefore follows from the assertion
\[
     \partial^{(\kappa, \alpha_1)}(\mathscr{R}) \in  \mathscr{S}_*  := {\rm span} \{  c_{d_1} \otimes \cdots \otimes c_{d_n} \mid  d_i > 1 \text{ for some }  n'+1 \leq i \leq n \}  \, . 
\]
In order to show this assertion let $\tau$ be one of the special cycles appearing in $\mathscr{R}$.
The following computations are  based on Proposition \ref{compute_diff} with $\ell  = \alpha_1$ and $\alpha \in \{ \alpha_1, \ldots, \alpha_n\}$. 

If there exists a $1 \leq \mu \leq n-j$ with $s_{\mu} \geq n'+1$ and $d_{\mu} \geq 2$, then $ \partial^{(\kappa, \alpha_1)}( \tau) \in \mathscr{S}_*$, as $\partial^{(\kappa, \alpha_1)}$ acts as a derivation and $\partial^{(\kappa, \alpha_1)} ( c^{(s_\mu)}_{2d_{\mu}-1}) = 0$.  

We will now consider the case $i_j \geq n'+1$. 
By definition 
\[
  \mathscr{T} (c_{2m_{1}-1}^{(i_1=1)} , \ldots, c_{2m_{j}-1}^{(i_j)})   = \sum_{\gamma=1}^j p^{\alpha_{i_\gamma} - \alpha_{i_1}} \cdot ( c^{(i_1=1)}_{2m_{1} } \otimes \cdots \otimes c^{(i_\gamma)}_{2m_{\gamma} -1} \otimes \cdots \otimes c^{(i_j)}_{2m_{j}} ) \, . 
\]
We distinguish the following cases: 
  \begin{itemize} 
     \item Let $1 \leq i_\gamma \leq n'$. 
              Since $\alpha_{i_j} > \alpha_1$, hence $\partial^{(\kappa, \alpha_1)}(c^{(i_j)}_{2m_{j}}) = 0$, we see  that 
   \[
       \partial^{(\kappa, \alpha_1)} ( c^{(i_1)}_{2m_1} \otimes \cdots \otimes c^{(i_\gamma)}_{2m_{\gamma} -1} \otimes \cdots \otimes c^{(i_j)}_{2m_{j}} )
   \]
  is a sum of elementary tensors each of which contains a component $c^{(i_j)}_{2m_{j}}$.   
                   \item Let $n'+1  \leq  i_\gamma \leq n$. 
              Then $\alpha_{i_\gamma} - \alpha_{i_1} \geq 1$ and 
   \[
      \partial^{(\kappa, \alpha_1)} ( p^{\alpha_{i_\gamma} - \alpha_{i_1}} \cdot c^{(1)}_{2m_1} \otimes  \cdots \otimes c^{(i_\gamma)}_{2m_{\gamma} -1} \otimes \cdots \otimes c^{(i_j)}_{2m_{j}} ) = 0.
   \] 
    \end{itemize} 
We conclude $ \partial^{(\kappa, \alpha_1)}(\tau) \in \mathscr{S}_*$ in the case $i_j \geq n'+1$ as well, finishing the proof of Lemma \ref{divide}.  
\end{proof} 

By Lemma \ref{divide} and Proposition \ref{productlens}  the image of $[c']$ in $\HH_* ( B \Gamma'; \F_p)$ is a linear combination of generalized products of $\Z/p^{\alpha_1}$-lens spaces $L^{2m_1-1} \times \cdots \times L^{2m_k-1} \to B\Gamma'$ for $1 \leq k \leq n'$. 

The cycle $\mathscr{R}$ occurring in  \eqref{decomposec} is $p$-atoral, since each summand is of degree larger than $n$ and $p$ is odd. 
Since also $c$ is $p$-atoral we conclude that $c' \in (C(\alpha_1)_*)^{n'}$ is $p$-atoral (here we use again that  $\alpha_{n'+1}, \ldots,  \alpha_n >  \alpha_1$). 
We can therefore assume that in the generalized products of lens spaces appearing before the case $m_1 = \ldots = m_k =1$ does not occur. 

In summary, modulo some $p$-atoral positive cycle (represented by a linear combination of generalized products of lens spaces) we can assume that  $c'$ is a $p$-atoral cycle in $(\tilde C(\alpha_1)_*)^{n'} \subset (C(\alpha_1)_*)^{n'}$ that maps to $0 \in \HH_* (B \Gamma' ;  \F_p)$. 

Since $\HH_* (B \Gamma'; \Z) \otimes \F_p \to \HH_* ( B \Gamma'; \F_p ) $ is injective we get $[c'] = p \cdot [\xi]$ for a cycle $\xi  \in  (\tilde C(\alpha_1)_*)^{n'}$. 
According to  Proposition  \ref{generated} we can assume that $\xi$ is a linear combination of special cycles, and hence $p \cdot \xi$ is a linear combination of cycles 
\[
  p \cdot  \big( \mathscr{T}  (c_{2m_{1}-1}^{(i_1)} , \ldots, c_{2m_{j}-1}^{(i_j)}) \otimes c_{2d_{1}-1}^{(s_1)} \otimes \cdots \otimes c_{2d_{n'-j}-1}^{(s_{n'-j})} \big) \in (\tilde C(\alpha_1)_*)^{n'} \, . 
\]
Proposition  \ref{bplreppos} \ref{dois} implies  that such cycles are positive in $(C(\alpha_1)_*)^{n'}$ whenever $j \geq 2$.
Since these cycles are also $p$-atoral for $j \geq 2$ (for $p$-odd) we can assume that $p \cdot \xi$ is a linear combination of cycles $c_{2d_1-1} \otimes \cdots \otimes c_{2d_{n'}-1}$ with at least one $d_i \geq 2$ (by $p$-atorality of $c'$). 
This shows that $p \cdot \xi$, and hence $c'$ are positive.  

In summary we have shown that $c \in \tilde C(\alpha_1)_* \otimes \cdots \otimes \tilde C(\alpha_n)_* \subset C(\alpha_1)_* \otimes \cdots \otimes C(\alpha_n)_*$ is positive, finishing the proof of Proposition \ref{cl} and hence of Theorem \ref{representable}.    

\end{proof}

\begin{bibdiv} 
\begin{biblist} 

\bib{baas1}{article}{ 
    AUTHOR = {Baas, Nils Andreas},
     TITLE = {On bordism theory of manifolds with singularities},
   JOURNAL = {Math. Scand.},
    VOLUME = {33},
      YEAR = {1973},
     PAGES = {279--302 (1974)}, 
}

\bib{Besse}{book}{ 
    AUTHOR = {Besse, Arthur L.},
     TITLE = {Einstein manifolds},
    SERIES = {Classics in Mathematics},
      NOTE = {Reprint of the 1987 edition},
 PUBLISHER = {Springer-Verlag, Berlin},
      YEAR = {2008},
     PAGES = {xii+516},
  }

\bib{Bot92}{book}{ 
    AUTHOR = {Botvinnik, Boris},
     TITLE = {Manifolds with singularities and the {A}dams-{N}ovikov
              spectral sequence},
    SERIES = {London Mathematical Society Lecture Note Series},
    VOLUME = {170},
 PUBLISHER = {Cambridge University Press, Cambridge},
      YEAR = {1992},
     PAGES = {xvi+181},
 }

\bib{Bot01}{article}{ 
    AUTHOR = {Botvinnik, Boris},
     TITLE = {Manifolds with singularities accepting a metric of positive scalar curvature},
   JOURNAL = {Geom. Topol.},
    VOLUME = {5},
      YEAR = {2001},
     PAGES = {683--718},
  }

\bib{BR1}{article}{ 
    AUTHOR = {Botvinnik, Boris}, 
    AUTHOR = {Rosenberg, Jonathan},
     TITLE = {The {Y}amabe invariant for non-simply connected manifolds},
   JOURNAL = {J. Differential Geom.},
    VOLUME = {62},
      YEAR = {2002},
    NUMBER = {2},
     PAGES = {175--208},
 }

\bib{BR2}{article}{ 
    AUTHOR = {Botvinnik, Boris}, 
    AUTHOR = {Rosenberg, Jonathan},
     TITLE = {Positive scalar curvature for manifolds with elementary abelian fundamental group},
   JOURNAL = {Proc. Amer. Math. Soc.},
    VOLUME = {133},
      YEAR = {2005},
    NUMBER = {2},
     PAGES = {545--556},
 }

\bib{Brown}{book}{ 
    AUTHOR = {Brown, Kenneth S.},
     TITLE = {Cohomology of groups},
    SERIES = {Graduate Texts in Mathematics},
    VOLUME = {87},
      NOTE = {Corrected reprint of the 1982 original},
 PUBLISHER = {Springer-Verlag, New York},
      YEAR = {1994},
     PAGES = {x+306},
}

\bib{Bunke}{article}{ 
    AUTHOR = {Bunke, Ulrich},
     TITLE = {Index theory, eta forms, and {D}eligne cohomology},
   JOURNAL = {Mem. Amer. Math. Soc.},
    VOLUME = {198},
      YEAR = {2009},
    NUMBER = {928},
     PAGES = {vi+120},
  }

\bib{EML}{article}{
    AUTHOR = {Eilenberg, Samuel}, 
    AUTHOR = {Mac Lane, Saunders},
     TITLE = {On the groups {$H(\Pi,n)$}. {II}. {M}ethods of computation},
   JOURNAL = {Ann. of Math. (2)},
    VOLUME = {60},
      YEAR = {1954},
     PAGES = {49--139},
 }

\bib{F}{article}{
 AUTHOR = {F\"{u}hring, Sven},
     TITLE = {A smooth variation of {B}aas-{S}ullivan theory and positive
              scalar curvature},
   JOURNAL = {Math. Z.},
     VOLUME = {274},
      YEAR = {2013},
    NUMBER = {3--4},
     PAGES = {1029--1046},
}

\bib{GL}{article}{ 
    AUTHOR = {Gromov, Mikhael}, 
    AUTHOR = {Lawson Jr., H. Blaine},
     TITLE = {The classification of simply connected manifolds of positive scalar curvature},
   JOURNAL = {Ann. of Math. (2)},
      VOLUME = {111},
      YEAR = {1980},
    NUMBER = {3},
     PAGES = {423--434},
}

\bib{Ha15}{article}{ 
    AUTHOR = {Hanke, Bernhard},
     TITLE = {Bordism of elementary abelian groups via inessential {B}rown-{P}eterson homology},
   JOURNAL = {J. Topol.},
    VOLUME = {9},
      YEAR = {2016},
    NUMBER = {3},
     PAGES = {725--746},
  }

\bib{HKW}{article}{ 
    AUTHOR = {Hanke, Bernhard}, 
    AUTHOR = {Kotschick, Dieter}, 
    AUTHOR = {Wehrheim, Jan},
     TITLE = {Dissolving four-manifolds and positive scalar curvature},
   JOURNAL = {Math. Z.},
    VOLUME = {245},
      YEAR = {2003},
    NUMBER = {3},
     PAGES = {545--555},
}

\bib{JW}{article}{ 
    AUTHOR = {Johnson, David Copeland}, 
    AUTHOR = {Wilson, W. Stephen},
     TITLE = {The {B}rown-{P}eterson homology of elementary {$p$}-groups},
   JOURNAL = {Amer. J. Math.},
  VOLUME = {107},
      YEAR = {1985},
    NUMBER = {2},
     PAGES = {427--453},
}

\bib{Joyce}{article}{ 
    AUTHOR = {Joyce, Dominic},
     TITLE = {A generalization of manifolds with corners},
   JOURNAL = {Adv. Math.},
    VOLUME = {299},
      YEAR = {2016},
     PAGES = {760--862},
 }

\bib{Kwasik}{article}{ 
    AUTHOR = {Kwasik, S\l awomir}, 
    AUTHOR =  {Schultz, Reinhard},
     TITLE = {Positive scalar curvature and periodic fundamental groups},
   JOURNAL = {Comment. Math. Helv.},
    VOLUME = {65},
      YEAR = {1990},
    NUMBER = {2},
     PAGES = {271--286},
  }

\bib{milnorcomplex}{article}{ 
    AUTHOR = {Milnor, John},
     TITLE = {On the cobordism ring {$\Omega^{\ast} $} and a complex analogue. {I}},
   JOURNAL = {Amer. J. Math.},
    VOLUME = {82},
      YEAR = {1960},
     PAGES = {505--521},
  }

\bib{Mironov}{article}{ 
    AUTHOR = {Mironov, Oleg K.},
     TITLE = {Existence of multiplicative structures in the theory of cobordism with singularities},
   JOURNAL = {Izv. Akad. Nauk SSR Ser. Mat.},
    VOLUME = {39},
      YEAR = {1975},
    NUMBER = {5},
     PAGES = {1065--1092},
}

\bib{Mitchell}{article}{
    AUTHOR = {Mitchell, Stephen A.},
     TITLE = {A proof of the {C}onner-{F}loyd conjecture},
   JOURNAL = {Amer. J. Math.},
    VOLUME = {106},
      YEAR = {1984},
    NUMBER = {4},
     PAGES = {889--891},
  }

\bib{Morava}{article}{ 
    AUTHOR = {Morava, Jack},
     TITLE = {A product for the odd-primary bordism of manifolds with singularities},
   JOURNAL = {Topology},
    VOLUME = {18},
      YEAR = {1979},
    NUMBER = {3},
     PAGES = {177--186},
  }

\bib{Nov60}{article}{ 
    AUTHOR = {Novikov, Sergei P.},
     TITLE = {Some problems in the topology of manifolds connected with the theory of {T}hom spaces},
   JOURNAL = {Soviet Math. Dokl.},
    VOLUME = {1},
      YEAR = {1960},
     PAGES = {717--720},
}
		
\bib{Ros}{article}{ 
    AUTHOR = {Rosenberg, Jonathan},
     TITLE = {{$C^\ast$}-algebras, positive scalar curvature, and the  {N}ovikov conjecture. {III}},
   JOURNAL = {Topology},
    VOLUME = {25},
      YEAR = {1986},
    NUMBER = {3},
     PAGES = {319--336},
} 

\bib{Ros_Surv}{incollection}{
    AUTHOR = {Rosenberg, Jonathan},
     TITLE = {Manifolds of positive scalar curvature: a progress report},
 BOOKTITLE = {Surveys in differential geometry. {V}ol. {XI}},
    SERIES = {Surv. Differ. Geom.},
    VOLUME = {11},
     PAGES = {259--294},
 PUBLISHER = {Int. Press, Somerville, MA},
      YEAR = {2007},
 }

\bib{RW}{article}{ 
  AUTHOR = {Ravenel, Douglas C.}, 
  AUTHOR = {Wilson, W. Stephen},
     TITLE = {The {M}orava {$K$}-theories of {E}ilenberg-{M}ac {L}ane spaces
              and the {C}onner-{F}loyd conjecture},
   JOURNAL = {Amer. J. Math.},
    VOLUME = {102},
      YEAR = {1980},
    NUMBER = {4},
     PAGES = {691--748},
}

\bib{RS}{collection.article}{ 
    AUTHOR = {Rosenberg, Jonathan}, 
    AUTHOR = {Stolz, Stephan},
     TITLE = {Metrics of positive scalar curvature and connections with surgery},
 BOOKTITLE = {Surveys on surgery theory, {V}ol. 2},
    SERIES = {Ann. of Math. Stud.},
    VOLUME = {149},
     PAGES = {353--386},
 PUBLISHER = {Princeton Univ. Press, Princeton, NJ},
      YEAR = {2001},
 }

\bib{SY}{article}{
    AUTHOR = {Schoen, Richard}, 
    AUTHOR = {Yau, Shing-Tung}, 
     TITLE = {On the structure of manifolds with positive scalar curvature},
   JOURNAL = {Manuscripta Math.},
      VOLUME = {28},
      YEAR = {1979},
    NUMBER = {1--3},
     PAGES = {159--183},
   }

\bib{Shimada}{article}{ 
 AUTHOR = {Shimada, Nobuo}, 
 AUTHOR = {Yagita, Nobuaki},
     TITLE = {Multiplications in the complex bordism theory with singularities},
   JOURNAL = {Publ. Res. Inst. Math. Sci.},
    VOLUME = {12},
      YEAR = {1976/77},
    NUMBER = {1},
     PAGES = {259--293},
  }

\bib{Stolz}{article}{ 
    AUTHOR = {Stolz, Stephan},
     TITLE = {Positive scalar curvature metrics--existence and
              classification questions},
 BOOKTITLE = {Proceedings of the {I}nternational {C}ongress of
              {M}athematicians, {V}ol. 1, 2 ({Z}\"{u}rich, 1994)},
     PAGES = {625--636},
 PUBLISHER = {Birkh\"{a}user, Basel},
      YEAR = {1995},
}

\bib{Thom54}{article}{ 
    AUTHOR = {Thom, Ren\'{e}},
     TITLE = {Quelques propri\'{e}t\'{e}s globales des vari\'{e}t\'{e}s diff\'{e}rentiables},
   JOURNAL = {Comment. Math. Helv.},
      VOLUME = {28},
      YEAR = {1954},
     PAGES = {17--86},
  }

\end{biblist}
\end{bibdiv}

\end{document}